\numberwithin{equation}{section}
\newtheorem{theorem}{Theorem}[section]
\newtheorem{remark}{Remark}
\newtheorem{lemma}{Lemma}
\begin{document}

\begin{frontmatter}



\title{An Efficient Constant-Coefficient MSAV Scheme for Computing Vesicle Growth and Shrinkage}

\author[iit]{Zhiwei Zhang}
\ead{zzhang107@hawk.illinoistech.edu}

\author[iit]{Shuwang Li}
\ead{sli15@illinoistech.edu}

\author[uci]{John Lowengrub}
\ead{jlowengr@uci.edu}

\author[utk]{Steven M. Wise}
\ead{swise1@utk.edu}

\affiliation[iit]{organization={Department of Applied Mathematics, Illinois Institute of Technology},
            city={Chicago},
            postcode={60616},
            state={IL},
            country={USA}}

\affiliation[uci]{organization={Department of Mathematics, University of California, Irvine},
            city={Irvine},
            postcode={92697},
            state={CA},
            country={USA}}

\affiliation[utk]{organization={Department of Mathematics, The University of Tennessee},
            city={Knoxville},
            postcode={37996},
            state={TN},
            country={USA}}

\begin{abstract}
We present a fast, unconditionally energy-stable numerical scheme for simulating vesicle deformation under osmotic pressure using a phase-field approach. The model couples an Allen--Cahn equation for the biomembrane interface with a variable-mobility Cahn--Hilliard equation governing mass exchange across the membrane. Classical approaches, including nonlinear multigrid and Multiple Scalar Auxiliary Variable (MSAV) methods, require iterative solution of variable-coefficient systems at each time step, resulting in substantial computational cost. We introduce a constant-coefficient MSAV (CC-MSAV) scheme that incorporates stabilization directly into the Cahn--Hilliard evolution equation rather than the chemical potential. This reformulation yields fully decoupled constant-coefficient elliptic problems solvable via fast discrete cosine transform (DCT), eliminating iterative solvers entirely. The method achieves $O(N^2 \log N)$ complexity per time step while preserving unconditional energy stability and discrete mass conservation. Numerical experiments verify second-order temporal and spatial accuracy, mass conservation to relative errors below $5 \times 10^{-11}$, and close agreement with nonlinear multigrid benchmarks. On grids with $N \geq 2048$, CC-MSAV achieves 6--15$\times$ overall speedup compared to classical MSAV with optimized preconditioning,  while the dominant Cahn--Hilliard subsystem is accelerated by up to two orders of magnitude. These efficiency gains, achieved without sacrificing accuracy, make CC-MSAV particularly well-suited for large-scale simulations of vesicle dynamics.
\end{abstract}

\begin{keyword}
Cahn-Hilliard equation
\sep Scalar auxiliary variable
\sep Constant-coefficient
\sep Energy stability
\sep Variable mobility
\sep Phase-field method
\sep Vesicle deformation
\end{keyword}

\end{frontmatter}


\section{Introduction}

Lipid vesicles are closed bilayer membranes enclosing an aqueous volume. These structures form when phospholipid molecules self-assemble in aqueous environments, with hydrophilic heads facing outward and hydrophobic tails forming the interior bilayer. Due to their structural simplicity and rich mechanical behavior, vesicles provide tractable platforms for studying membrane deformation, shape transitions, and transport phenomena relevant to cellular biology \cite{seifert1997configurations}.  
Among the many physical 
processes governing vesicle behavior, osmotically driven volume change plays a particularly 
important role: concentration gradients across the semi-permeable membrane induce material 
transport and subsequent morphological evolution \cite{mori2011model,vogl2014effect,quaife2021hydrodynamics}. Understanding how vesicles respond to osmotic stress, whether growing, shrinking, or maintaining equilibrium, is essential for applications ranging from drug delivery systems \cite{elani2014vesicle} to synthetic cell design and fundamental biophysics.

Mathematical modeling of vesicle mechanics has evolved along two complementary approaches. Sharp-interface models explicitly track the membrane as a moving interface, satisfying geometric constraints and interface conditions exactly. These methods have successfully captured vesicle dynamics in flow \cite{veerapaneni2009boundary,veerapaneni2009numerical,sohn2010dynamics,salac2011level,hausser2013thermodynamically}, shape transitions under shear \cite{liu2017dynamics}, and membrane wrinkling phenomena \cite{liu2014nonlinear,xiao2023three}. Phase-field models, alternatively, represent the membrane as a continuous transition layer of small thickness $\varepsilon$, transforming the moving interface problem into coupled reaction–diffusion equations on a fixed domain \cite{du2004phase,wang2008modelling,lowengrub2009phase}. The phase-field approach implicitly determines the interface position and offers advantages in computational simplicity, as well as natural handling of topological changes.

Our work builds upon the phase-field vesicle model of Tang et al.~\cite{tang2023phase}, which couples Allen--Cahn and Cahn--Hilliard equations to capture membrane evolution and concentration dynamics under osmotic pressure. While their nonlinear multigrid (NLMG) solver achieves first-order temporal accuracy with near-optimal convergence, it requires iterative solution of the coupled nonlinear system at each time step, incurring substantial computational expense for large-scale simulations. 

Several recent developments have enriched the numerical analysis of related phase-field problems. Guo et al.~\cite{guo2025phase} constructed high-order energy-stable discontinuous Galerkin methods for osmotic flow through semi-permeable membranes, and Zhang et al.~\cite{zhang2024existence} established existence and regularity results for triplet phase-separation models. For gradient flows, the Scalar Auxiliary Variable (SAV) framework \cite{shen2018scalar}, building upon the earlier Invariant Energy Quadratization (IEQ) approach \cite{yang2017numerical}, has emerged as an effective tool for constructing energy-stable schemes. SAV introduces auxiliary variables to represent nonlinear energy terms, enabling reformulation of the system into linear problems with constant coefficients while maintaining unconditional energy stability. The Multiple-SAV (MSAV) extension \cite{cheng2018multiple} introduces separate auxiliary variables for each nonlinear contribution and serves as the foundation for the present work. However, extending these advantages to Cahn--Hilliard equations with variable mobility 
is nontrivial and leads to additional structural challenges.

For Cahn--Hilliard equations with variable mobility, applying SAV- and MSAV-type 
reformulations remains challenging, as unconditional energy stability is often 
difficult to maintain together with a constant-coefficient, fully decoupled linear 
structure. In classical SAV and MSAV approaches, stabilization is typically introduced 
at the level of the chemical potential; when the mobility is variable, this generally 
leads to linear equations with variable coefficients and thus 
precludes the constant-coefficient structure of the resulting elliptic problems. Huang et al.~\cite{huang2023structure} developed a structure-preserving upwind-SAV 
scheme for degenerate mobility that is energy stable; however, the mobility enters 
the discrete fluxes at the new time level, resulting in mobility-dependent operators 
rather than constant-coefficient elliptic problems. Bretin et al.~\cite{bretin2023mobility} 
showed that straightforward SAV extensions lose Fourier-explicit solvability for 
degenerate mobilities. To address this, they introduced a mobility-based SAV 
reformulation in which the implicit operator involves a constant mobility upper bound, 
yielding constant-coefficient linear subproblems and stability results that rely on 
auxiliary-variable constraints. By contrast, Orizaga and Witelski~\cite{orizaga2024imex} 
proposed IMEX splittings that enable constant-coefficient implicit solves by separating 
the mobility into constant and variable parts, but unconditional energy stability is 
not guaranteed in general and holds only for certain parameter choices.

Motivated by these challenges, we introduce a constant-coefficient Multiple Scalar 
Auxiliary Variable (CC-MSAV) scheme that exploits the specific structure of the coupled 
Allen--Cahn--Cahn--Hilliard system for vesicle deformation. The model couples an 
Allen--Cahn equation for the membrane phase field $\phi$ with a Cahn--Hilliard equation 
for the concentration field $\psi$, where variable mobility $M_\psi(\phi)$ introduces 
coupling between the two equations. Following classical approaches, we treat the mobility 
through extrapolation to achieve linearity. Our key insight is to reformulate the osmotic 
coupling such that stabilization can be incorporated directly into the Cahn--Hilliard 
evolution equation rather than into the chemical potential, which is the standard approach in 
classical MSAV methods. Specifically, the energy functional decomposes into four components 
(surface, bending, surface area constraint, and osmotic energy), each with a nonlinear part represented by a 
separate SAV variable. The osmotic energy, which couples $\phi$ and $\psi$, is wrapped 
entirely within its SAV variable, ensuring that the chemical potential contains no linear differential operators acting on the unknown $\psi^{n+1}$, such as $\psi^{n+1}$ or $\Delta\psi^{n+1}$. This formulation enables linear 
stabilization to be incorporated directly into the evolution equation, yielding a fully 
decoupled system at each time step: five fourth-order equations for the Allen--Cahn subsystem 
and two second-order equations for the Cahn--Hilliard subsystem, all with constant 
coefficients. Each equation is solved via discrete cosine transform (DCT), which diagonalizes 
the discrete Laplacian under homogeneous Neumann boundary conditions and provides fast, 
machine-precision direct solves with computational complexity $O(N^2 \log N)$ 
\cite{strang1999discrete,trefethen2000spectral,leveque2007finite,schumann1988fast}.

This constant-coefficient reformulation provides substantial computational advantages over
variable-coefficient approaches. At grid resolutions relevant for vesicle simulations
($N \ge 2048$), sparse direct solvers for variable-coefficient elliptic equations become
prohibitively expensive due to fill-in during factorization, resulting in superlinear
computational complexity and excessive memory usage. Iterative methods such as multigrid
or preconditioned Krylov solvers offer improved scalability, but even well-optimized
implementations remain significantly slower than fast transform-based solvers available
for constant-coefficient operators. When classical MSAV is equipped with a DCT-based Poisson preconditioner, approximately
12--13 Krylov iterations are required per Cahn--Hilliard solve (24--26 per time step).
In contrast, the proposed CC-MSAV formulation yields fully decoupled constant-coefficient
elliptic problems that are solved directly via DCT with zero iterations. As demonstrated
in Section~\ref{sec:computational_complexity}, this structural simplification leads to
6--15$\times$ overall speedup for $N \ge 2048$. Moreover, the Cahn--Hilliard subsystem, which dominates the computational cost in classical formulations, becomes a minor component of the total runtime under the proposed scheme. This reduction yields order-of-magnitude savings relative to both sparse direct solvers and well-preconditioned iterative methods, without compromising accuracy or unconditional energy stability.

The numerical experiments demonstrate that the proposed CC-MSAV method achieves second-order 
temporal and spatial accuracy, robust energy decay, and near machine precision mass 
conservation, with relative errors below $5 \times 10^{-11}$. The method performs robustly across a wide range of scenarios, 
including non-convex geometries (triangles, stars, crescents)
involving topological changes. Comparisons with classical MSAV and well-preconditioned 
iterative methods demonstrate significant 
reductions in wall-clock time while maintaining agreement with established nonlinear 
multigrid benchmarks. The combination of unconditional energy stability, second-order 
accuracy, optimal complexity scaling, and the elimination of iterative solvers makes CC-MSAV 
particularly well-suited for large-scale computational investigations of vesicle dynamics 
under osmotic stress.

The remainder of this paper is organized as follows. In Section~\ref{sec:mathematical_formulation_phase_field_model}, we present the phase-field model and derive the MSAV reformulation. In Section~\ref{sec:numerical_scheme_development}, we present the numerical discretization, prove energy stability, and discuss computational efficiency. In Section~\ref{sec:numerical_results}, we provide comprehensive numerical validation, including convergence studies, benchmark comparisons with NLMG, and robustness demonstrations on complex geometries. In Section~\ref{sec:conclusion_and_future_work}, we summarize the findings and discuss future research directions. For completeness, in~\ref{sec:appendix_energy_stability_proof} we provide the full proof of unconditional energy stability for the proposed CC-MSAV scheme; in~\ref{sec:appendix_solution_procedure} we give the complete solution procedure for the MSAV--BDF2 system; and in~\ref{sec:appendix_fast_direct_solver} we describe the DCT-based fast direct solver used for the constant-coefficient elliptic equations.

\section{Mathematical Formulation of the Phase-Field Model}
\label{sec:mathematical_formulation_phase_field_model}

In this section, we present the mathematical formulation of the phase-field vesicle model and its MSAV reformulation. We first recall the model of Tang et al. \cite{tang2023phase}, then introduce an MSAV reformulation that preserves energy dissipation while leading to linear, decoupled, and computationally efficient schemes.

\subsection{Phase-Field Vesicle Model}
\label{sec:benchmark_model}

We consider the phase-field vesicle model of Tang~et~al.~\cite{tang2023phase}, which describes vesicle deformation driven by surface tension, bending elasticity, osmotic pressure, and an arc-length conservation constraint. 
Let $\Omega \subset \mathbb{R}^2$ denote a square computational domain. 
Two primary fields are defined on $\Omega$: the phase-field variable $\phi(x,t): \Omega \to \mathbb{R}$, with $\phi = 1$ inside the vesicle and $\phi = -1$ outside, and the solute concentration $\psi(x,t): \Omega \to \mathbb{R}$. 
The zero level set $\{\mathbf{x}: \phi(\mathbf{x})=0\}$ approximates the membrane location.

\paragraph{Governing equations}
The system evolves according to coupled Allen--Cahn and Cahn--Hilliard equations, representing gradient flows of a total free energy functional $F[\phi,\psi]$:
\begin{align}
    \frac{\partial \phi}{\partial t} &= -M_{\phi}\,\mu, \label{eq:AC}\\
    \frac{\partial \psi}{\partial t} &= \nabla \cdot \!\left( M_{\psi}(\phi)\,\nabla \nu \right), \label{eq:CH}
\end{align}
where $\mu = \delta F / \delta \phi$ and $\nu = \delta F / \delta \psi$ denote the chemical potentials, whose explicit forms will be derived below. Here $M_{\phi} > 0$ is constant, while the Cahn--Hilliard mobility $M_{\psi}(\phi) = 1 - M_{0}(\phi^2 - 1)^2$ with $0 < M_0 < 1$ is large in the bulk phases ($\phi = \pm 1$) but suppressed near the membrane, modeling a semi-permeable interface.

\begin{remark}
Although the evolution equation \eqref{eq:CH} is written in Cahn--Hilliard form, the osmotic free energy in the present model contains no gradient term in $\psi$. As a result, the associated chemical potential $\nu=\delta F/\delta\psi$ contains no spatial derivatives, and the resulting Cahn--Hilliard equation is second order rather than fourth order. We retain the term ``Cahn--Hilliard equation'' to emphasize its conservative gradient-flow structure, consistent with related phase-field vesicle models.
\end{remark}

\paragraph{Free energy functional}
The governing equations~\eqref{eq:AC}--\eqref{eq:CH} are gradient flows of the total free energy
\begin{equation}
    F[\phi,\psi]
    = F^{\text{surf}}[\phi]
    + F^{\text{bend}}[\phi]
    + F^{\text{area}}[\phi]
    + F^{\text{osm}}[\phi,\psi],
    \label{eq:energy_functional}
\end{equation}
which combine surface, bending, arc-length, and osmotic contributions. The energy components are
\begin{align}
    F^{\text{surf}}[\phi] &= \gamma_{\text{surf}} 
        \int_{\Omega} f^{\text{surf}}(\phi, \nabla \phi) \, d\mathbf{x},\\
    F^{\text{bend}}[\phi] &= \gamma_{\text{bend}} 
        \int_{\Omega} f^{\text{bend}}(\phi, \Delta \phi) \, d\mathbf{x},\\
    F^{\text{osm}}[\phi,\psi] &= 
        \int_{\Omega} f^{\text{osm}}(\phi, \psi) \, d\mathbf{x},\\
    F^{\text{area}}[\phi] &= \frac{\gamma_{\text{area}}}{2}
      \left(
        \int_{\Omega} f^{\text{surf}}(\phi, \nabla \phi) \, d\mathbf{x}
        - A
      \right)^{\!2},
\end{align}
where $\gamma_{\text{surf}}$, $\gamma_{\text{bend}}$, and $\gamma_{\text{area}}$ are surface tension, bending rigidity, and arc-length penalty coefficients, respectively.

\noindent
The energy densities are~\cite{cahn1958free,du2005phase,giga2017variational}:
\begin{subequations}\label{eq:energy_densities}
\begin{align}
    f^{\text{surf}}(\phi, \nabla \phi)
        &= \frac{3\sqrt{2}}{4}
           \left( \frac{1}{\varepsilon} g(\phi)
                + \frac{\varepsilon}{2} |\nabla \phi|^2 \right),\\
    f^{\text{bend}}(\phi, \Delta \phi)
        &= \frac{3\sqrt{2}}{16\varepsilon}
           \left( \frac{g'(\phi)}{\varepsilon} - \varepsilon \Delta \phi \right)^2,\\
    f^{\text{osm}}(\phi, \psi)
        &= \frac{1 + p(\phi)}{2} f^{\text{in}}(\psi)
           + \frac{1 - p(\phi)}{2} f^{\text{out}}(\psi),\label{eq:osm_density}
\end{align}
\end{subequations}
where $g(\phi) = \tfrac{1}{4}(\phi^2 - 1)^2$ is the double-well potential, $\varepsilon$ is the interface thickness, and
\begin{equation}
    f^{\text{in/out}}(\psi) = \frac{\gamma_{\text{in/out}}}{2}(\psi - \psi_{\text{in/out}})^2 + \beta_{\text{in/out}}.
\end{equation}
The osmotic energies are smoothly connected by an interpolation function $p(\phi)$ satisfying $p(1)=1$, $p(-1)=-1$, and $p'(1)=p'(-1)=0$. 
In all computations we take $\displaystyle p(\phi) = \sin\!\left(\frac{\pi}{2}\phi\right)$.

\paragraph{Chemical potentials and boundary conditions}
With the energy functional defined, the chemical potentials are obtained as variational derivatives of $F[\phi,\psi]$. 
To circumvent the presence of explicit fourth-order derivatives in $\phi$, an auxiliary variable $\omega:=\frac{1}{\varepsilon} g'(\phi) - \varepsilon \Delta \phi$ is introduced.
The resulting expressions for the chemical potentials are
\begin{align}
    \mu &= \gamma_{\text{surf}} \frac{3 \sqrt{2}}{4}\,\omega
        + \gamma_{\text{bend}} \frac{3 \sqrt{2}}{8}
        \left( \frac{g''(\phi)}{\varepsilon^2} \omega - \Delta \omega \right) \notag\\
        &\quad + \gamma_{\text{area}} 
        \left( \int_{\Omega} f^{\text{surf}}\!\left(\phi,\nabla\phi\right) \, d\mathbf{x} - A \right)
        \frac{3 \sqrt{2}}{4}\, \omega
        + \frac{p'(\phi)}{2}\left( f^{\text{in}}(\psi) - f^{\text{out}}(\psi) \right), \label{eq:mu}\\[2mm]
    \nu &= \frac{1 + p(\phi)}{2} \frac{d f^{\text{in}}}{d \psi}(\psi)
        + \frac{1 - p(\phi)}{2} \frac{d f^{\text{out}}}{d \psi}(\psi). \label{eq:nu}
\end{align}
We impose homogeneous Neumann conditions $\partial_n \phi = \partial_n \mu = \partial_n \omega = \partial_n \psi = \partial_n \nu = 0$ on $\partial\Omega$.

\paragraph{Energy dissipation}
The gradient-flow structure yields the energy dissipation law
\begin{equation}
    \frac{dF}{dt}
    = - \int_{\Omega}
        \left(
            M_{\phi}|\mu|^2
            + M_{\psi}(\phi)|\nabla\nu|^2
        \right)
    d\mathbf{x} \le 0,
\end{equation}
ensuring thermodynamic consistency.

\subsection{MSAV Reformulation}
\label{sec:msav_reformulation}

We now introduce a reformulation based on the Multiple Scalar Auxiliary Variable (MSAV) method~\cite{cheng2018multiple,shen2018scalar}. 
Our goal is to preserve the thermodynamic structure of the original system while transforming the evolution equations into a form that leads to linear, decoupled, and energy-stable numerical schemes. The MSAV approach with multiple auxiliary variables is necessary because the disparate energy contributions, including the penalty terms for enforcing the arc-length constraint, behave very differently and cannot be properly handled with a single auxiliary variable~\cite{cheng2018multiple}.

\paragraph{Modified energy functional}
Following the MSAV approach, the nonlinear contributions from the surface, bending, area, and osmotic energies are represented using scalar auxiliary variables $V(t)$, $U(t)$, $W(t)$, and $Z(t)$, respectively. 
The standard SAV framework introduces a linear stabilization parameter $\beta$ in the energy splitting~\cite{shen2018scalar} to ensure sufficient dissipation in the implicit part of the scheme and to improve numerical stability when the nonlinear energy dominates. We reformulate the bending energy through integration by parts. 
\begin{align}
    F^{\text{bend}}[\phi]
    &=\gamma_{\text{bend}}\int_{\Omega}  \frac{3 \sqrt{2}}{16 \varepsilon} 
    \left( \frac{g'(\phi)}{\varepsilon} - \varepsilon \Delta \phi \right)^2 d\mathbf{x}\notag\\
    &=\gamma_{\text{bend}}\int_{\Omega} \frac{3 \sqrt{2}}{16 \varepsilon}
       \left(\frac{1}{\varepsilon^{2}}\left(\phi^{3}-\phi\right)^{2}
       -2|\nabla \phi|^{2}+6 \phi^{2}|\nabla \phi|^{2}
       +\varepsilon^{2}(\Delta \phi)^{2}\right) d\mathbf{x}. \label{eq:bending_ibp}
\end{align}
This decomposition reveals that the nonlinear terms 
$\frac{1}{\varepsilon^{2}}(\phi^{3}-\phi)^{2}$ and $6\phi^{2}|\nabla\phi|^{2}$ 
are non-negative, satisfying the requirement that the nonlinear part of the energy be bounded from below—a key assumption of classical SAV-type methods. The modified total energy functional is then defined as
\begin{equation} \label{eq:modified_energy}
\begin{split}
    E_{\text{mod}}(\phi, \psi; U, V, W, Z) ={}& 
    \int_{\Omega} \left[ \gamma_{\text{surf}} \frac{3 \sqrt{2}}{4}\left(\frac{\varepsilon}{2}|\nabla \phi|^{2}+\frac{\beta}{2\varepsilon}\phi^2\right) \right] d\mathbf{x} \\
    & + \int_{\Omega} \left[ \gamma_{\text{bend}} \frac{3 \sqrt{2}}{16 \varepsilon}\left(\varepsilon^{2}(\Delta \phi)^{2}-2|\nabla \phi|^{2}\right) \right] d\mathbf{x} \\
    & + \gamma_{\text{surf}} \frac{3 \sqrt{2}}{4} V^{2}
      + \frac{\gamma_{\text{area}}}{2} U^{2}
      + \frac{3 \sqrt{2}}{16\varepsilon} \gamma_{\text{bend}} W^{2}\\
      &+ \frac{1}{2}Z^2 
      - \gamma_{\text{surf}}\frac{3\sqrt{2}}{16\varepsilon}(\beta^2+2\beta),
\end{split}
\end{equation}
where the constant ensures equivalence with the original energy functional. The auxiliary variables are defined by
\begin{subequations}\label{eq:aux_vars}
\begin{align}
 V(t) &= \sqrt{\int_{\Omega} \frac{(\phi^{2}-1-\beta)^2}{4\varepsilon} \,d\mathbf{x}}, 
 & W(t) &= \sqrt{\int_{\Omega}\!\left(6 \phi^{2}|\nabla \phi|^{2}+\frac{(\phi^{3}-\phi)^{2}}{\varepsilon^{2}}\right) d\mathbf{x}},\label{eq:aux_VW}\\
 Z(t) &= \sqrt{2\int_{\Omega} f^{\text{osm}}(\phi,\psi) \, d\mathbf{x}}, 
 & U(t) &= \int_{\Omega} f^{\text{surf}}(\phi, \nabla \phi) \, d\mathbf{x} - A.\label{eq:aux_ZU}
\end{align}
\end{subequations}

\paragraph{Variational derivatives and reformulated equations}
The variational derivatives of the nonlinear energy components are
\begin{align}
    S(\phi) &= 
        \frac{\frac{1}{\varepsilon} \phi(\phi^{2}-1-\beta)}
        {2 \sqrt{\int_{\Omega} \frac{1}{4 \varepsilon} (\phi^{2}-1-\beta)^2 \,d\mathbf{x}}},\quad
    H(\phi) = 
        \frac{3 \sqrt{2}}{4}\left[\frac{1}{\varepsilon} \phi\left(\phi^{2}-1\right)-\varepsilon\Delta\phi\right], \\
    Q(\phi) &= 
        \frac{6\!\left[\phi|\nabla\phi|^2-\nabla\!\cdot(\phi^2\nabla\phi)\right]
              +\frac{1}{\varepsilon^2}(\phi^3-\phi)(3\phi^2-1)}
        {\sqrt{\int_{\Omega} \!\left( 6\phi^2|\nabla\phi|^2+\frac{1}{\varepsilon^2}(\phi^3-\phi)^2 \right) d\mathbf{x}}}, \\
    K(\phi,\psi) &= 
        \frac{p'(\phi)\!\left(f^{\text{in}}(\psi)-f^{\text{out}}(\psi)\right)}
        {2\sqrt{\int_{\Omega}\!\left[(1+p(\phi))f^{\text{in}}(\psi)+(1-p(\phi))f^{\text{out}}(\psi)\right]d\mathbf{x}}}, \\
    P(\phi,\psi) &= 
        \frac{ \gamma_{\text{in}}(1+p(\phi))(\psi-\psi_{\text{in}}) 
             + \gamma_{\text{out}}(1-p(\phi))(\psi-\psi_{\text{out}}) }
        {2\sqrt{\int_{\Omega}\!\left[(1+p(\phi))f^{\text{in}}(\psi)+(1-p(\phi))f^{\text{out}}(\psi)\right]d\mathbf{x}}}.
\end{align}
The modified governing equations are
\begin{align}
 \frac{\partial \phi}{\partial t} &= -M_{\phi} \mu, \label{eq:msav_phi_evol}\\
 \mu &= -\gamma_{\text {surf }} \frac{3 \sqrt{2} \varepsilon}{4} \Delta \phi 
        + \gamma_{\text{surf}}\frac{3\sqrt{2}\beta}{4\varepsilon}\phi
        + \gamma_{\text {bend }} \frac{3 \sqrt{2} }{4 \varepsilon} \Delta \phi 
        + \gamma_{\text {bend }} \frac{3 \sqrt{2} \varepsilon}{8} \Delta^{2} \phi \notag \\
 & \quad  + \gamma_{\text {surf }} \frac{3 \sqrt{2}}{2} V S(\phi) 
           + \gamma_{\text {area }} U H(\phi)
           + \gamma_{\text {bend }} \frac{3 \sqrt{2}}{8 \varepsilon} W Q(\phi)
           + ZK(\phi,\psi), \label{eq:msav_mu}\\ 
 \frac{\partial \psi}{\partial t} &= \nabla \cdot \left( M_{\psi}(\phi) \nabla \nu \right), \label{eq:msav_psi_evol}\\
 \nu &=  Z P(\phi,\psi).\label{eq:msav_nu}
\end{align}
These equations preserve the gradient-flow structure while transforming nonlinear terms into linear couplings through the auxiliary variables. Homogeneous Neumann conditions are imposed:
\begin{equation}
    \frac{\partial \phi}{\partial n}
    = \frac{\partial \mu}{\partial n}
    = \frac{\partial \psi}{\partial n}
    = \frac{\partial \nu}{\partial n}
    = 0
    \quad \text{on } \partial\Omega.
    \label{eq:msav_bc}
\end{equation}

\paragraph{Auxiliary variable evolution and initial conditions}
Differentiating the auxiliary-variable definitions~\eqref{eq:aux_vars} in time and applying the chain rule yields
\begin{subequations}\label{eq:aux_evolution}
\begin{align}
 \frac{dV}{dt} &=\int_{\Omega} S(\phi) \frac{\partial\phi}{\partial t} \,d\mathbf{x}, &
 \frac{dU}{dt} &=\int_{\Omega} H(\phi) \frac{\partial\phi}{\partial t} \,d\mathbf{x},\\
 \frac{dW}{dt} &=\int_{\Omega} Q(\phi) \frac{\partial\phi}{\partial t} \,d\mathbf{x}, &
 \frac{dZ}{dt} &=\int_{\Omega} \!\left[ K(\phi,\psi) \frac{\partial\phi}{\partial t}
                  + P(\phi,\psi) \frac{\partial\psi}{\partial t} \right] d\mathbf{x}.
\end{align}
\end{subequations}
Initial conditions are
\begin{align}\label{eq:msav_ic}
\phi|_{t=0} &= \phi^0, \quad \psi|_{t=0} = \psi^0, \quad
V|_{t=0} = V(\phi^0),\notag\\ U|_{t=0} &= U(\phi^0), \quad 
W|_{t=0} = W(\phi^0), \quad Z|_{t=0} = Z(\phi^0,\psi^0).
\end{align}

\paragraph{Energy dissipation of the reformulated system}
The reformulated system~\eqref{eq:msav_phi_evol}--\eqref{eq:aux_evolution} with boundary conditions~\eqref{eq:msav_bc} and initial conditions~\eqref{eq:msav_ic} is equivalent to the original system~\eqref{eq:AC}--\eqref{eq:CH} and preserves the thermodynamic structure. By taking the $L^2$ inner product of \eqref{eq:msav_phi_evol} with $\mu$, of \eqref{eq:msav_mu} with $\partial\phi/\partial t$, of \eqref{eq:msav_psi_evol} with $\nu$, of \eqref{eq:msav_nu} with $\partial\psi/\partial t$, and of~\eqref{eq:aux_evolution} with $V$, $U$, $W$, and $Z$ respectively, and then integrating by parts and summing all resulting equalities, we obtain
\begin{equation}
\label{eq:energy_dissipation}
\frac{dE_{\text{mod}}}{dt} 
= -\int_{\Omega} M_{\phi}|\mu|^2 \, d\mathbf{x} 
  - \int_{\Omega} M_{\psi}(\phi)|\nabla\nu|^2 \, d\mathbf{x} 
\leq 0.
\end{equation}
Since $M_{\phi}, M_{\psi} > 0$, the modified energy decreases monotonically, preserving thermodynamic consistency.
\section{Numerical Scheme Development}
\label{sec:numerical_scheme_development}

In this section, we develop a numerical scheme for the reformulated governing equations that achieves three main objectives: unconditional energy stability, mass conservation, and computational efficiency through constant-coefficient linear systems. The key innovation is a strategic reformulation of the stabilized MSAV approach that places Cahn--Hilliard stabilization directly in the evolution equation rather than in the chemical potential. This seemingly minor modification has profound computational consequences: all resulting elliptic systems possess constant coefficients despite the presence of variable mobility, enabling solution via fast transform methods rather than iterative solvers.

\subsection{Overview of the Numerical Approach}

Our scheme integrates three components to work together:
\begin{enumerate}
    \item \textit{Unconditional energy stability}: Rigorous discrete energy dissipation without time step restrictions (Theorems~\ref{thm:energy_stability} and~\ref{thm:fully_discrete_stability}).
    
    \item \textit{Exact mass conservation}: Preserved at the discrete level through sum-mation-by-parts identities.
    
    \item \textit{Computational efficiency}: Seven constant-coefficient elliptic systems per time step, solved directly via discrete cosine transforms.
\end{enumerate}

Sections~\ref{sec:temporal}--\ref{sec:spatial} detail the discretization and prove stability; Section~\ref{sec:computational_complexity} analyzes the computational structure and demonstrates performance gains.

\subsection{Temporal Discretization and Stabilization Strategy}
\label{sec:temporal}

The system is discretized in time using a semi-implicit, second-order backward differentiation formula (BDF2). The scheme is based on the MSAV formulation from~\cite{cheng2018multiple}, where the nonlinear energy terms handled by the auxiliary variables are treated explicitly using second-order extrapolation. 

\paragraph{Stabilization Strategy}
A key feature of our approach is the introduction of linear stabilization terms to ensure robust performance for this stiff, coupled system. We consider two stabilization terms:
\begin{itemize}
    \item \textit{Allen--Cahn stabilization} ($\theta > 0$): A standard term $\theta(\Delta\phi^{*,n+1}-\Delta\phi^{n+1})$ in the chemical potential.
    \item \textit{Cahn--Hilliard stabilization} ($\lambda > 0$): A novel term $\lambda(\Delta\psi^{n+1}-\Delta\psi^{*,n+1})$ placed directly in the evolution equation rather than in the chemical potential.
\end{itemize}

This strategic placement of the Cahn--Hilliard stabilization is crucial: while conventional approaches add stabilization inside the chemical potential~\cite{chen2019fast}, this leads to linear systems with variable coefficients due to the variable mobility $M_{\psi}(\phi)$. Our placement ensures that all resulting linear systems have constant coefficients, enabling the use of highly efficient fast direct solvers.

\paragraph{Main Time-Stepping Scheme (CC-MSAV-BDF2)}
Given  solutions $(\phi^n,  \psi^n)$ and $(\phi^{n-1}, \psi^{n-1})$ at time levels $n$ and $n-1$, we solve for the next time level $(\phi^{n+1}, \psi^{n+1})$ via the following system:
\begin{align}
    \frac{3\phi^{n+1}-4\phi^{n}+\phi^{n-1}}{2\Delta t} &= -M_{\phi} \mu^{n+1},\label{eq:AC_BDF2}\\
    \mu^{n+1} 
    &=-\gamma_{\text{surf}} \frac{3 \sqrt{2} \varepsilon}{4} \Delta \phi^{n+1}+\theta(\Delta\phi^{*,n+1}-\Delta\phi^{n+1})\notag\\
    &\quad+\gamma_{\text{surf}}\frac{3\sqrt{2}\beta}{4\varepsilon}\phi^{n+1} 
    +\gamma_{\text{bend}} \frac{3 \sqrt{2}}{4 \varepsilon} \Delta \phi^{*,n+1}\notag\\
    &\quad+\gamma_{\text{bend}} \frac{3 \sqrt{2} \varepsilon}{8} \Delta^{2} \phi^{n+1}+\gamma_{\text{surf}} \frac{3 \sqrt{2}}{2} V^{n+1} S(\phi^{*,n+1}) \notag\\
    &\quad+\gamma_{\text{area}} U^{n+1} H(\phi^{*,n+1})+\gamma_{\text{bend}} \frac{3 \sqrt{2}}{8 \varepsilon} W^{n+1} Q(\phi^{*,n+1}) \notag\\
    &\quad+Z^{n+1}K(\phi^{*,n+1},\psi^{*,n+1}),\label{eq:mu_BDF2}\\
    \frac{3\psi^{n+1}-4\psi^{n}+\psi^{n-1}}{2\Delta t}&= \lambda(\Delta\psi^{n+1}-\Delta\psi^{*,n+1})+\nabla \cdot \left( M_{\psi}(\phi^{*,n+1}) \nabla \nu^{n+1} \right),\label{eq:CH_BDF2}\\
    \nu^{n+1} &= Z^{n+1}P(\phi^{*,n+1},\psi^{*,n+1}),\label{eq:nu_BDF2}
    \end{align}
    
    \noindent
    \textit{Scalar auxiliary variable updates:}
    \begin{align}3V^{n+1}-4V^n+V^{n-1}&= \int_{\Omega} S\left(\phi^{*,n+1}\right)\left(3\phi^{n+1}-4\phi^n+\phi^{n-1}\right) \,d\mathbf{x}, \label{eq:V_BDF2}\\
    3U^{n+1}-4U^n+U^{n-1}&= \int_{\Omega} H\left(\phi^{*,n+1}\right)\left(3\phi^{n+1}-4\phi^n+\phi^{n-1}\right) \,d\mathbf{x}, \label{eq:U_BDF2}\\
    3W^{n+1}-4W^n+W^{n-1}&= \int_{\Omega} Q\left(\phi^{*,n+1}\right)\left(3\phi^{n+1}-4\phi^n+\phi^{n-1}\right) \,d\mathbf{x}, \label{eq:W_BDF2}\\
    3Z^{n+1}-4Z^n+Z^{n-1}&=\int_{\Omega} K(\phi^{*,n+1},\psi^{*,n+1})(3\phi^{n+1}-4\phi^n+\phi^{n-1})\,d\mathbf{x}\notag\\
    &\quad+\int_{\Omega} P(\phi^{*,n+1},\psi^{*,n+1})(3\psi^{n+1}-4\psi^n+\psi^{n-1})\,d\mathbf{x},\label{eq:Z_BDF2}
\end{align}
where the second-order extrapolations are defined as
\begin{equation}
\phi^{*,n+1}=2\phi^n-\phi^{n-1}, \quad \psi^{*,n+1}=2\psi^n-\psi^{n-1}.
\end{equation}

\begin{remark}[Selection of stabilization parameters]
The stabilization parameters $\theta$ and $\lambda$ are chosen to ensure physical energy stability and robust performance. Through systematic testing at $\varepsilon = 0.03125$ and $N = 256$, we adopt $\theta = 1.5$ for standard simulations (increased to $\theta = 30$ for challenging cases involving aggressive morphological changes) and $\lambda = 10^5$ for all cases, especially when the osmotic energy dominates. The detailed parameter selection procedure is provided in~\ref{app:stabilization_parameters}.
\end{remark}

\subsection{Energy Stability Analysis}

A central advantage of the MSAV framework is its ability to produce schemes that satisfy discrete energy dissipation laws, thereby inheriting the thermodynamic structure of the continuous phase-field model. For our coupled Allen--Cahn--Cahn--Hilliard system with stabilization, we establish rigorous energy stability results that depend on the stabilization parameter $\lambda$.

\paragraph{The Modified Discrete Energy}

The BDF2-MSAV framework requires a modified energy functional that accounts for both the multistep structure and the auxiliary variable formulation. For the standard MSAV approach applied to gradient flows, this modification is well-established~\cite{cheng2018multiple}. However, our strategic placement of Cahn--Hilliard stabilization (Section~\ref{sec:temporal}) introduces a novel coupling structure that necessitates careful treatment.

The modified discrete energy at time level $n+1$ takes the form
\begin{equation}
E^{n+1,n}_{\mathrm{mod}} = E^{n+1,n}_{\mathrm{BDF2}} - E^{n+1}_{\lambda},
\label{eq:modified_energy_structure}
\end{equation}
where:
\begin{itemize}
\item $E^{n+1,n}_{\mathrm{BDF2}}$ captures the phase-field, auxiliary variable, and stabilization parameter $\theta$ contributions at $t^{n+1}$,
\item $E^{n+1}_{\lambda}$ accumulates the weighted stabilization dissipation related to $\lambda$  since the second time step.
\end{itemize}

\noindent
Explicitly, the modified energy is given by
\begin{align}
E^{n+1,n}_{\mathrm{mod}} &= \frac{1}{2} \gamma_{\mathrm{bend}} \frac{3\sqrt{2} \varepsilon}{8} \left( \|\Delta \phi^{n+1}\|^2 + \|2\Delta \phi^{n+1} - \Delta \phi^n\|^2 \right) \notag \\
&\quad+ \frac{1}{2} \left( \gamma_{\mathrm{surf}} \frac{3\sqrt{2} \varepsilon}{4} + \theta \right) \left( \|\nabla \phi^{n+1}\|^2 + \|2\nabla \phi^{n+1} - \nabla \phi^n\|^2 \right) \notag \\
&\quad- \frac{1}{2} \tilde{\theta} 
\left( \|\nabla \phi^{n+1}\|^2 + \|2\nabla \phi^{n+1} - \nabla \phi^n\|^2 
- 2\|\nabla \phi^{n+1} - \nabla \phi^n\|^2 \right) \notag\\
&\quad+ \frac{1}{2} \gamma_{\mathrm{surf}} \frac{3\sqrt{2} \beta}{4\varepsilon} \left( \|\phi^{n+1}\|^2 + \|2\phi^{n+1} - \phi^n\|^2 \right) \notag \\
&\quad+ \frac{1}{2} \gamma_{\mathrm{surf}} \frac{3\sqrt{2}}{2} \left( (V^{n+1})^2 + (2V^{n+1} - V^n)^2 \right) \notag \\
&\quad+ \frac{1}{2} \gamma_{\mathrm{area}} \left( (U^{n+1})^2 + (2U^{n+1} - U^n)^2 \right) \notag \\
&\quad+ \frac{1}{2} \gamma_{\mathrm{bend}} \frac{3\sqrt{2}}{8\varepsilon} \left( (W^{n+1})^2 + (2W^{n+1} - W^n)^2 \right) \notag \\
&\quad+ \frac{1}{2}  \left( (Z^{n+1})^2 + (2Z^{n+1} - Z^n)^2 \right)\notag \\
&\quad- \sum_{j=2}^{n+1} \tfrac{\Delta t}{2}\,\|d^{j}\|^2
,
\label{eq:modified_discrete_energy}
\end{align}
where $\tilde{\theta} = \theta + \gamma_{\mathrm{bend}} \frac{3\sqrt{2}}{4\varepsilon}$ accounts for the extrapolation terms in the SAV framework, and $d^{n+1} = \frac{\lambda(\nabla\psi^{n+1}-\nabla\psi^{*,n+1})}{\sqrt{M_{\psi}(\phi^{*,n+1})}}$.

\begin{theorem}[Unconditional Energy Stability]
\label{thm:energy_stability}
Consider the numerical scheme~\eqref{eq:AC_BDF2}--\eqref{eq:Z_BDF2} with the modified discrete energy
 represents the stabilization correction. Then the following stability property holds for all $n \geq 2$:
\begin{equation}
\frac{E^{n+1,n}_{\mathrm{mod}} - E^{n,n-1}_{\mathrm{mod}}}{2\Delta t} + \frac{K^n}{2\Delta t}
= -\left[ M_{\phi} \|\mu^{n+1}\|^2 + \left\|c^{n+1}+\tfrac{1}{2}d^{n+1}\right\|^2 \right]\leq0,
\label{eq:discrete_dissipation}
\end{equation}
where $K^n \geq 0$ represents accumulated numerical dissipation arising from the BDF2 second-order differences $\|a^{n+1}-2a^n+a^{n-1}\|^2$ across all discrete variables that are not scaled by $\lambda$(see~\ref{sec:appendix_energy_stability_proof} for the explicit expression), and
\begin{equation}
c^{n+1} = \sqrt{M_{\psi}(\phi^{*,n+1})} \nabla\nu^{n+1}
\end{equation}
denotes the weighted gradient of the Cahn--Hilliard chemical potential.

\end{theorem}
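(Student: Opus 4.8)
The plan is to derive the identity \eqref{eq:discrete_dissipation} by testing each equation of the scheme against a suitably chosen quantity, invoking the fundamental BDF2 telescoping identity to convert the resulting bilinear forms into successive differences of the modified energy, and arranging the sum so that every nonlinear coupling carried by the auxiliary variables cancels exactly. First I would take the $L^2$ inner product of the Allen--Cahn evolution \eqref{eq:AC_BDF2} with $\mu^{n+1}$, which immediately yields the dissipation term $-M_\phi\|\mu^{n+1}\|^2$; of the chemical-potential relation \eqref{eq:mu_BDF2} with the discrete difference $3\phi^{n+1}-4\phi^n+\phi^{n-1}$; of the Cahn--Hilliard evolution \eqref{eq:CH_BDF2} with $\nu^{n+1}$; of the potential relation \eqref{eq:nu_BDF2} with $3\psi^{n+1}-4\psi^n+\psi^{n-1}$; and of the four scalar updates \eqref{eq:V_BDF2}--\eqref{eq:Z_BDF2} with $V^{n+1}$, $U^{n+1}$, $W^{n+1}$, and $Z^{n+1}$, respectively. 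The purpose of the last four pairings is that the nonlinear products $V^{n+1}S(\phi^{*,n+1})$, $U^{n+1}H(\phi^{*,n+1})$, $W^{n+1}Q(\phi^{*,n+1})$ in \eqref{eq:mu_BDF2}, together with the coupled pair $Z^{n+1}K(\phi^{*,n+1},\psi^{*,n+1})$ and $Z^{n+1}P(\phi^{*,n+1},\psi^{*,n+1})$ from \eqref{eq:mu_BDF2}--\eqref{eq:nu_BDF2}, are replaced term-by-term by the purely algebraic right-hand sides of the scalar updates; for instance $(S(\phi^{*,n+1}),3\phi^{n+1}-4\phi^n+\phi^{n-1})$ becomes $3V^{n+1}-4V^n+V^{n-1}$, and the two $Z$-couplings combine under \eqref{eq:Z_BDF2} so that only quadratic expressions in the auxiliary variables survive.

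The essential algebraic tool is the BDF2 polarization identity
\begin{equation}
2\bigl(3a^{n+1}-4a^n+a^{n-1},\,a^{n+1}\bigr)
= \|a^{n+1}\|^2+\|2a^{n+1}-a^n\|^2-\|a^n\|^2-\|2a^n-a^{n-1}\|^2+\|a^{n+1}-2a^n+a^{n-1}\|^2,
\end{equation}
applied with $a$ standing in turn for $\Delta\phi$, $\nabla\phi$, $\phi$, and each scalar $V,U,W,Z$ (after integrating by parts under the Neumann conditions \eqref{eq:msav_bc} to move Laplacians onto the test function). The first four squared norms telescope, upon division by $2\Delta t$, into $(E^{n+1,n}_{\mathrm{BDF2}}-E^{n,n-1}_{\mathrm{BDF2}})/(2\Delta t)$ and reproduce exactly the quadratic pattern $\|a^{n+1}\|^2+\|2a^{n+1}-a^n\|^2$ recorded in \eqref{eq:modified_discrete_energy}; the leftover second-order differences $\|a^{n+1}-2a^n+a^{n-1}\|^2$ are non-negative and are collected into $K^n$. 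The extrapolated bending term $\gamma_{\mathrm{bend}}\frac{3\sqrt2}{4\varepsilon}\Delta\phi^{*,n+1}$ and the Allen--Cahn stabilization $\theta(\Delta\phi^{*,n+1}-\Delta\phi^{n+1})$ contribute the $\tilde\theta$ correction appearing in \eqref{eq:modified_discrete_energy}, which I would track by writing the extrapolations as $2\phi^n-\phi^{n-1}$ and regrouping against the corresponding second-order difference terms.

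The crux, and the step I expect to be the main obstacle, is the Cahn--Hilliard evolution \eqref{eq:CH_BDF2}, where the stabilization $\lambda(\Delta\psi^{n+1}-\Delta\psi^{*,n+1})$ sits inside the evolution equation rather than the potential. Testing with $\nu^{n+1}$ and integrating by parts gives
\begin{equation}
\Bigl(\tfrac{3\psi^{n+1}-4\psi^n+\psi^{n-1}}{2\Delta t},\,\nu^{n+1}\Bigr)
= -\bigl(d^{n+1},c^{n+1}\bigr)-\|c^{n+1}\|^2,
\end{equation}
where the mobility weights cancel pointwise after writing $\lambda(\nabla\psi^{n+1}-\nabla\psi^{*,n+1})=\sqrt{M_\psi(\phi^{*,n+1})}\,d^{n+1}$ and $\nabla\nu^{n+1}=c^{n+1}/\sqrt{M_\psi(\phi^{*,n+1})}$. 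Completing the square converts the right-hand side into $-\|c^{n+1}+\tfrac12 d^{n+1}\|^2+\tfrac14\|d^{n+1}\|^2$, which supplies precisely the dissipation term in \eqref{eq:discrete_dissipation}; this left-hand side is in turn identified, via \eqref{eq:nu_BDF2} and the $\psi$-contribution of the $Z$-update \eqref{eq:Z_BDF2}, with the $Z^{n+1}$ coupling carried over from \eqref{eq:mu_BDF2}, thereby closing the auxiliary-variable bookkeeping. The residual $\tfrac14\|d^{n+1}\|^2$ must then be absorbed by the telescoping of the accumulated stabilization sum $-\sum_{j=2}^{n+1}\frac{\Delta t}{2}\|d^j\|^2$ in \eqref{eq:modified_discrete_energy}, which contributes $-\tfrac14\|d^{n+1}\|^2$ after division by $2\Delta t$ and so cancels it exactly. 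Verifying that the factor $\tfrac12$ inside $c^{n+1}+\tfrac12 d^{n+1}$ and the weight $\tfrac{\Delta t}{2}$ in the sum are mutually consistent is the delicate step that the placement of $\lambda$ in the evolution equation forces, and I would confirm this alignment carefully before concluding. Once this cancellation and the matching of all auxiliary-variable quadratics are established, collecting the remaining non-negative second-order differences into $K^n$ yields \eqref{eq:discrete_dissipation} with a manifestly non-positive right-hand side.
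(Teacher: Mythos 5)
Your proposal is correct and follows essentially the same route as the paper's proof in~\ref{sec:appendix_energy_stability_proof}: the same pairings of each equation with its natural test quantity, the same BDF2 telescoping identities (including the extrapolation identity that produces the $\tilde\theta$ correction), the same completion of the square with $c^{n+1}$ and $d^{n+1}$, and the same absorption of the residual $\tfrac14\|d^{n+1}\|^2$ via the telescoping cumulative sum $-\sum_{j=2}^{n+1}\tfrac{\Delta t}{2}\|d^{j}\|^2$ built into the modified energy. The consistency check you flag as the delicate step does work out exactly as you describe: the difference of cumulative sums contributes $-\tfrac{\Delta t}{2}\|d^{n+1}\|^2$, which after division by $2\Delta t$ cancels the $+\tfrac14\|d^{n+1}\|^2$ remainder from the completed square.
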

\begin{proof}
The detailed proof is provided in~\ref{sec:appendix_energy_stability_proof}.
\end{proof}
\begin{remark}[Unconditional energy stability]
Inequality~\eqref{eq:discrete_dissipation} holds without restriction on $\Delta t$, $\theta$, or $\lambda$. Unlike standard MSAV schemes where stabilization enters through the chemical potential, we add it directly to the Cahn--Hilliard evolution equation, yielding (with variable mobility) the coupled dissipation $\|c^{n+1}+\tfrac{1}{2}d^{n+1}\|^2$. See~\ref{sec:appendix_energy_stability_proof} for details.
\end{remark}

\subsection{Spatial Discretization}
\label{sec:spatial}

We employ the cell-centered finite difference framework on staggered grids developed by
Wise~\cite{wise2010unconditionally}. This discretization is chosen for its
\emph{structure-preserving} properties, which are essential for transferring the
continuous energy dissipation and mass conservation laws to the fully discrete level.

In particular, the scheme satisfies:
\begin{enumerate}
    \item \textit{Discrete summation-by-parts (SBP):} exact discrete analogs of integration
    by parts, enabling rigorous energy estimates without consistency errors;
    \item \textit{Exact mass conservation:} the conservative divergence structure ensures
    $\int_\Omega \phi^{n+1}\,dx = \int_\Omega \phi^{n}\,dx$ to machine precision at every
    time step.
\end{enumerate}

The spatial domain $\Omega = (0,L_x)\times(0,L_y)$ is discretized using a uniform staggered
grid with spacing $h$. Scalar variables (e.g., $\phi,\psi$) are defined at cell
centers, while fluxes are located on cell faces. Homogeneous Neumann boundary conditions
are enforced through ghost cells. Complete definitions of the grid, discrete operators,
inner products, and SBP identities are provided in~\ref{sec:appendix_spatial_operators}.

\subsubsection{Structure-Preserving Discretization of Nonlinear Terms}

While linear operators follow standard staggered-grid constructions, special care is
required for nonlinear terms to preserve the continuous energy structure. All nonlinear
operators are discretized in conservative form so that discrete SBP identities hold. We highlight the key ingredients below.

\paragraph{Quadratic face-averaging for cubic nonlinearities}

To preserve exact discrete integration-by-parts for energy terms involving $\phi^3$, we
employ a quadratic face-averaging operator. For east--west faces,
\begin{equation}
A_x^{(q)}(\phi^2)_{i+\frac{1}{2},j}
= \tfrac{1}{3}\big(\phi_{i+1,j}^2 + \phi_{i+1,j}\phi_{i,j} + \phi_{i,j}^2\big),
\end{equation}
with an analogous definition in the $y$-direction.

This averaging preserves the discrete product rule
\[
D_x(\phi^3) = 3 A_x^{(q)}(\phi^2)\,D_x\phi,
\]
which yields the exact discrete identity
\begin{equation}
\mathbf{I}_h(\phi^3 \Delta_h \phi)
= -3\,\mathbf{I}_h(\phi^2 |\nabla_h \phi|^2),
\end{equation}
fully consistent with the continuous gradient-flow structure.

\paragraph{Variable mobility flux}

The Cahn--Hilliard flux with variable mobility is discretized conservatively as
\begin{equation}
[\nabla \cdot (M_\psi \nabla \nu)]_{i,j}
= d_x\!\left(M_\psi(A_x\phi)\,D_x\nu\right)_{i,j}
+ d_y\!\left(M_\psi(A_y\phi)\,D_y\nu\right)_{i,j},
\label{eq:mobility_flux}
\end{equation}
where the mobility is evaluated at face centers via arithmetic averaging.

\paragraph{Nonlocal bending term}

The nonlocal bending contribution
$\phi|\nabla\phi|^2 - \nabla\cdot(\phi^2\nabla\phi)$ requires a compatible discretization
of both terms. We define
\begin{align}
[\phi|\nabla\phi|^2]_{i,j}
&= \phi_{i,j}\left[
\frac{|D_x\phi|^2_{i+\frac{1}{2},j} + |D_x\phi|^2_{i-\frac{1}{2},j}}{2}
+ \frac{|D_y\phi|^2_{i,j+\frac{1}{2}} + |D_y\phi|^2_{i,j-\frac{1}{2}}}{2}
\right],\\
[\nabla\cdot(\phi^2\nabla\phi)]_{i,j}
&= d_x\!\left((A_x\phi^2)\,D_x\phi\right)_{i,j}
+ d_y\!\left((A_y\phi^2)\,D_y\phi\right)_{i,j}.
\end{align}
This conservative formulation preserves the discrete SBP structure and yields an exact
discrete bending-energy identity (see~\ref{sec:appendix_spatial_operators} for verification).

\subsubsection{Fully-Discrete Energy Stability}

The structure-preserving spatial discretization enables a direct extension of the
semi-discrete energy stability result to the fully discrete scheme.

\begin{theorem}[Fully-Discrete Energy Stability]
\label{thm:fully_discrete_stability}
Let $E_h^{n+1,n}$ denote the fully discrete modified energy obtained from
\eqref{eq:modified_discrete_energy} by replacing all continuous operators and norms with
their discrete counterparts defined in~\ref{sec:appendix_spatial_operators}. Then, for all $\Delta t>0$ and $h>0$,
\begin{equation}
\frac{E_h^{n+1,n} - E_h^{n,n-1}}{2\Delta t}
\leq
-\left[
M_\phi \|\mu_h^{n+1}\|_2^2
+ \left\|c_h^{n+1} + \tfrac{1}{2}d_h^{n+1}\right\|_2^2
\right]
\le 0.
\end{equation}
\end{theorem}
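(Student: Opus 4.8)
The plan is to establish the fully discrete inequality by reproducing, line for line, the semi-discrete energy argument behind Theorem~\ref{thm:energy_stability}, with every integration-by-parts step replaced by its \emph{exact} discrete summation-by-parts (SBP) counterpart. The starting point is the same sequence of discrete testings: pair the discrete Allen--Cahn evolution \eqref{eq:AC_BDF2} with $\mu_h^{n+1}$, the chemical-potential relation \eqref{eq:mu_BDF2} with the BDF2 difference $3\phi^{n+1}-4\phi^n+\phi^{n-1}$, the Cahn--Hilliard evolution \eqref{eq:CH_BDF2} with $\nu_h^{n+1}$, the relation \eqref{eq:nu_BDF2} with $3\psi^{n+1}-4\psi^n+\psi^{n-1}$, and the scalar updates \eqref{eq:V_BDF2}--\eqref{eq:Z_BDF2} with $V^{n+1},U^{n+1},W^{n+1},Z^{n+1}$; then sum all resulting identities. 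The goal is to recover the telescoping structure of $E_h^{n+1,n}$ together with the dissipation $-M_\phi\|\mu_h^{n+1}\|_2^2-\|c_h^{n+1}+\tfrac12 d_h^{n+1}\|_2^2$ and a nonnegative remainder $K_h^n\ge0$ (the accumulated second-order differences) that is then discarded to produce the stated $\le$.

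I would separate the manipulations into two classes. The first is purely algebraic in time: the BDF2 polarization identity $\langle 3a^{n+1}-4a^n+a^{n-1},a^{n+1}\rangle_h=\tfrac12(\|a^{n+1}\|_2^2+\|2a^{n+1}-a^n\|_2^2-\|a^n\|_2^2-\|2a^n-a^{n-1}\|_2^2+\|a^{n+1}-2a^n+a^{n-1}\|_2^2)$, applied to every scalar and gridfunction variable, together with the handling of the auxiliary variables $V,U,W,Z$. These steps contain no spatial operators and therefore transfer verbatim to the discrete setting, generating the quadratic terms of $E_h^{n+1,n}$ and the differences collected in $K_h^n$.

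The second class consists of the integration-by-parts steps, where the structure-preserving discretization does all the work. I would invoke the discrete Laplacian SBP identity $\langle\Delta_h f,g\rangle_h=-\langle\nabla_h f,\nabla_h g\rangle_h$ (exact under the ghost-cell Neumann implementation), the conservative variable-mobility identity $\langle\nabla_h\cdot(M_\psi\nabla_h\nu^{n+1}),\nu^{n+1}\rangle_h=-\|c_h^{n+1}\|_2^2$ from \eqref{eq:mobility_flux}, the cubic identity $\mathbf{I}_h(\phi^3\Delta_h\phi)=-3\,\mathbf{I}_h(\phi^2|\nabla_h\phi|^2)$ from the quadratic face-averaging, and the compatible discretization of the nonlocal bending term. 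Because each identity is exact rather than merely consistent, the cancellations that close the semi-discrete proof leave no truncation residual, and the $\phi$- and auxiliary-variable contributions assemble into precisely the bending, surface, area, and osmotic terms of $E_h^{n+1,n}$ (with the $\theta$ stabilization producing the $\tilde\theta$ gradient terms exactly as in standard MSAV).

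The main obstacle, and the only genuinely new point relative to the semi-discrete case, is the discrete completing-of-the-square that yields $\|c_h^{n+1}+\tfrac12 d_h^{n+1}\|_2^2$. Testing the stabilization term $\lambda(\Delta_h\psi^{n+1}-\Delta_h\psi^{*,n+1})$ against $\nu_h^{n+1}$ gives, via the Laplacian SBP identity, the cross term $-\langle c_h^{n+1},d_h^{n+1}\rangle_h$, while the flux supplies $-\|c_h^{n+1}\|_2^2$; combining these and absorbing $\tfrac14\|d_h^{n+1}\|_2^2$ into the accumulated $\sum_{j=2}^{n+1}\tfrac{\Delta t}{2}\|d_h^j\|_2^2$ term of the modified energy is what generates the coupled square. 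For this to be exact at the discrete level, the constant-coefficient stabilization Laplacian $\Delta_h$ and the variable-mobility divergence $\nabla_h\cdot(M_\psi\nabla_h\cdot)$ must both be SBP-adjoint with respect to the \emph{same} face-based discrete gradient, so that $c_h^{n+1}=\sqrt{M_\psi}\,\nabla_h\nu^{n+1}$ and $d_h^{n+1}=\lambda(\nabla_h\psi^{n+1}-\nabla_h\psi^{*,n+1})/\sqrt{M_\psi}$ — with $M_\psi$ the face-averaged mobility of \eqref{eq:mobility_flux} — live in one inner-product space and their discrete cross term matches the one produced by the stabilization, the face-centered $\sqrt{M_\psi}$ factors cancelling pointwise on faces. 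Verifying this compatibility is the heart of the argument; once it is in place the remaining assembly is mechanical, and $M_\phi>0$, $M_\psi>0$ deliver the final $\le0$.
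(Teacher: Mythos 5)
Your proposal is correct and takes essentially the same route as the paper: the paper's proof of Theorem~\ref{thm:fully_discrete_stability} simply asserts that the semi-discrete argument of Theorem~\ref{thm:energy_stability} carries over verbatim once continuous inner products are replaced by their discrete analogs and every integration by parts by the exact SBP identities of~\ref{sec:appendix_spatial_operators}. Your key observation---that the constant-coefficient stabilization Laplacian and the variable-mobility divergence must be adjoint with respect to the \emph{same} face-based discrete gradient, so that $c_h^{n+1}$ and $d_h^{n+1}$ live in one face-centered inner-product space and the face-averaged $\sqrt{M_\psi}$ factors cancel pointwise in the cross term---is precisely the substance of the paper's brief remark that the discrete Green's identity holds exactly under the ghost-cell Neumann implementation.
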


\begin{proof}
The proof follows identically to Theorem~\ref{thm:energy_stability}, with continuous inner
products replaced by their discrete analogs. Homogeneous Neumann boundary conditions
ensure that all boundary terms vanish in the discrete SBP identities, and the discrete
Green's identity $(\phi,\Delta_h\psi)_h = -\|\nabla_h\phi\|_2^2$ holds exactly.
\end{proof}

\begin{remark}[Mass conservation]
Discrete mass conservation follows directly from the conservative flux formulation
\eqref{eq:mobility_flux}. For any discrete flux field satisfying the boundary conditions,
\[
h^2\sum_{i,j} [\nabla_h\cdot\mathbf{F}]_{i,j} = 0,
\]
implying $\sum_{i,j}\psi_{i,j}^{n+1}=\sum_{i,j}\psi_{i,j}^{n}$.
Numerical experiments in Section~\ref{sec:numerical_results} confirm relative mass errors
below $5\times 10^{-11}$.
\end{remark}

\subsection{Computational Efficiency: Fast Direct Solution}
\label{sec:computational_complexity}

A primary objective of this work is to improve the computational efficiency of 
phase-field vesicle simulations through constant-coefficient reformulation. The 
essential distinction between the proposed and classical MSAV approaches lies in the 
structure of the resulting linear systems: the classical formulation produces 
\emph{variable-coefficient} elliptic equations in the Cahn--Hilliard subsystem due to 
spatially-varying mobility, while the proposed reformulation yields 
\emph{constant-coefficient} elliptic operators for all linear subproblems. This 
structural property enables direct solution via discrete cosine transforms (DCT) 
without matrix assembly or iteration, in contrast to classical methods requiring 
either sparse direct solvers or preconditioned Krylov iterations.

To provide a controlled comparison isolating the algorithmic contribution, we compare 
both MSAV implementations in \texttt{MATLAB} R2025b using identical numerical 
infrastructure (built-in sparse direct solver and DCT functions). Each time step 
requires solving seven linear elliptic systems: five fourth-order equations from the Allen--Cahn subsystem and two second-order 
equations from the Cahn--Hilliard subsystem. For rectangular domains with 
homogeneous Neumann boundary conditions, the constant-coefficient structure of all 
seven operators admits efficient DCT-based solutions.

\subsubsection{Solution Algorithm and Complexity}

The solution procedure at each time step consists of four stages with distinct 
computational characteristics:

\paragraph{Stages 1, 3--4: Lower-order operations}
Explicit extrapolations $\phi^{*,n+1} = 2\phi^n - \phi^{n-1}$ and 
$\psi^{*,n+1} = 2\psi^n - \psi^{n-1}$ (Stage~1), solution of a dense $5 \times 5$ 
algebraic system for scalar auxiliary variable coefficients (Stage~3), and 
reconstruction of updated fields as linear combinations (Stage~4) each require 
$O(N^2)$ operations and contribute negligibly to overall complexity.

\paragraph{Stage 2: Elliptic system solution (dominant cost)}
The implicit discretization yields constant-coefficient operators
\begin{align}
    \chi(\phi) &= C_{1}\,\phi - C_{2}\,\Delta_h \phi + C_{3}\,\Delta_h^2 \phi, \label{eq:allen_cahn_op}\\
    \zeta(\psi) &= \left( \frac{3}{2\Delta t} - \lambda \Delta_h \right)\psi, \label{eq:cahn_hilliard_op}
\end{align}
where $C_{1}, C_{2}, C_{3} > 0$ depend on model parameters and $\Delta_h$ denotes the 
discrete Laplacian. The five fourth-order equations share operator~\eqref{eq:allen_cahn_op}, 
while the two second-order equations share operator~\eqref{eq:cahn_hilliard_op}. The 
DCT diagonalizes both operators, reducing each solve to pointwise division in 
transform space (see~\ref{sec:appendix_fast_direct_solver} for implementation details).

The natural partitioning into two operator groups enables \emph{batched} DCT 
evaluation: right-hand sides sharing identical operators are stacked along a third 
array dimension and transformed simultaneously via \texttt{MATLAB}'s multidimensional 
DCT. This requires only two forward-inverse DCT pairs per time step (one per operator 
type) rather than individual transforms for each of the seven equations. Each DCT 
evaluation requires approximately $4N^2\log_2 N$ floating-point operations, yielding 
dominant per-timestep complexity
\[
    \mathcal{C}_{\mathrm{DCT}} = O(N^2 \log N),
\]
with a modest constant factor reflecting the specific structure (seven elliptic solves 
grouped into two batched operations). Beyond asymptotic scaling, batching reduces 
function call overhead and improves cache efficiency, contributing to the 
observed performance gains in Section~\ref{sec:performance_results}.

\subsubsection{Baseline Methods}
\label{sec:baseline_methods}

For comparison, we implement the classical MSAV formulation using two 
standard approaches for the variable-coefficient Cahn--Hilliard subsystem:

\paragraph{Sparse direct solver (Classical+Direct)}
Assemble one sparse variable-coefficient matrix per time step for two direct solves 
(via \texttt{MATLAB}'s backslash) of the Cahn--Hilliard subsystem equations. Robust but 
exhibits steep computational and memory growth with grid refinement. We also tested reuse of sparse LDL factorizations for the two Cahn–Hilliard solves per time step; however, at large grid sizes, the factorization cost and memory traffic dominated, providing no net speedup. We therefore report results using \texttt{MATLAB}’s optimized sparse backslash, which delivered the best direct-solver performance in our tests.

\paragraph{DCT-preconditioned PCG (Classical+DCT-PCG)}
Matrix-free operator application with DCT-based Poisson preconditioner (spatial-average 
mobility), constructed once per time step for two PCG solves. Maintains mesh-independent 
iteration counts (~12--13 iterations) without parameter tuning. Incomplete 
factorizations (ILU, IC) performed poorly and are excluded.

The Allen--Cahn subsystem yields constant-coefficient equations in all formulations, 
solved identically via DCT across all three implementations. Performance differences 
therefore isolate the Cahn--Hilliard subsystem treatment, where the proposed 
constant-coefficient reformulation enables direct DCT solution versus variable-coefficient 
equations requiring sparse direct or iterative solvers in classical methods. 

While reference implementations using fully coupled nonlinear multigrid (NLMG) in 
\texttt{Fortran} exist in the literature, our \texttt{MATLAB}-based comparison with identical 
numerical infrastructure provides a controlled assessment of the algorithmic impact of the constant-coefficient 
reformulation.
\subsubsection{Performance Results}
\label{sec:performance_results}

Figure~\ref{fig:runtime_comparison} demonstrates substantial efficiency gains from the 
constant-coefficient reformulation across grid sizes $N = 128$ to $4096$. At $N=4096$, 
the proposed CC-MSAV+DCT scheme achieves approximately 6.5$\times$ speedup over 
DCT-preconditioned PCG and 15$\times$ over sparse direct methods. Both formulations discretize the same continuous model and maintain second-order accuracy in time and space, unconditional energy stability, and produce nearly indistinguishable solutions. 
(cf. Section~\ref{sec:efficiency_accuracy}) This confirms that efficiency gains arise 
purely from algorithmic structure rather than accuracy trade-offs.

\begin{figure}[H]
    \centering
    \includegraphics[width=.95\linewidth]{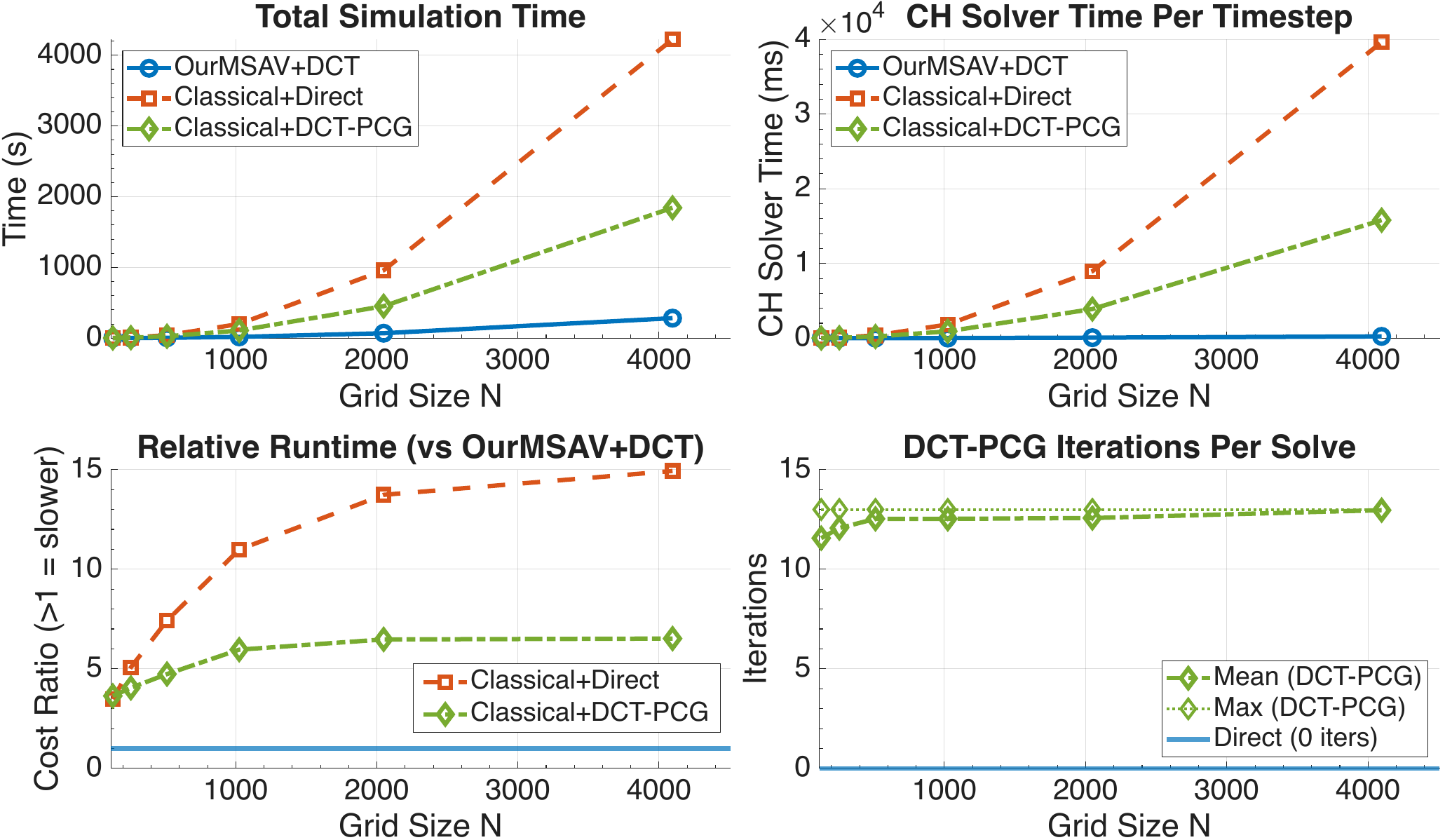}
    \caption{
Runtime comparison for proposed CC-MSAV+DCT (blue) versus classical MSAV with 
sparse direct solver (red) and DCT-preconditioned PCG (green) on grids 
$N=128$--$4096$. \textbf{(Top left)} Total simulation time for 100 time steps. 
\textbf{(Top right)} Per-timestep solver cost for Cahn--Hilliard subsystem. 
\textbf{(Bottom left)} Speedup factors showing 6--15$\times$ acceleration at 
$N=4096$. \textbf{(Bottom right)} PCG iteration counts (green diamonds) remain 
mesh-independent at 12--13 iterations; blue line marks zero iterations for direct DCT. 
All timings in \texttt{MATLAB} R2025b on identical hardware.
}
\label{fig:runtime_comparison}
\end{figure}
To further isolate the dominant computational bottleneck at the largest tested resolution, Table~\ref{tab:ch_cost_4096} reports the per-timestep Cahn--Hilliard solver cost and its fraction of total runtime at $N=4096$.
\begin{table}[H]
\centering
\caption{Cahn--Hilliard solver cost and overhead at $N=4096$ (100 time steps).}
\label{tab:ch_cost_4096}
\resizebox{\linewidth}{!}{%
\begin{tabular}{lccc}
\hline
Method & CH time per step (s) & CH fraction of total (\%) & Relative CH speedup \\
\hline
CC-MSAV+DCT        & 0.237  & 8.4  & 1.0$\times$ \\
Classical+Direct  & 39.63  & 93.8 & 167$\times$ \\
Classical+DCT-PCG & 15.80  & 85.7 & 67$\times$ \\
\hline
\end{tabular}
}
\end{table}
\paragraph{Scaling analysis}
The key panels reveal distinct scaling behaviors:
\begin{itemize}
\item \textit{Total runtime} (top left): The proposed method exhibits substantially 
milder growth with grid refinement compared to classical approaches. At $N=4096$, 
100 time steps require approximately $283$~s for CC-MSAV+DCT versus $1844$~s for 
Classical+DCT-PCG and $4224$~s for Classical+Direct.

\item \textit{Per-timestep Cahn--Hilliard solver cost} (top right): This panel isolates 
the core algorithmic difference. At $N=4096$, the per-timestep Cahn--Hilliard cost is 
$0.237$~s for CC-MSAV+DCT, compared with $15.8$~s for Classical+DCT-PCG 
($\sim67\times$ slower) and $39.6$~s for Classical+Direct ($\sim167\times$ slower). 
The Cahn--Hilliard subsystem dominates the runtime of classical formulations but 
accounts for less than $10\%$ of the total cost in the proposed scheme.

\item \textit{Speedup factors} (bottom left): The advantage increases monotonically 
with grid refinement, from approximately $3$--$5\times$ at $N=256$ to $6$--$15\times$ 
at $N=4096$. This trend reflects the fundamental structural benefit of the proposed 
approach: classical methods require either repeated sparse matrix assembly or 
multiple Krylov iterations per time step, whereas the proposed scheme eliminates both 
through direct batched DCT solvers with $O(N^2\log N)$ complexity.

\item \textit{Iteration counts} (bottom right): The DCT-based Poisson preconditioner 
maintains mesh-independent performance, requiring $12$--$13$ PCG iterations per solve 
across all grid sizes. The proposed method requires zero iterations by design.
\end{itemize}

\paragraph{Practical implications}
These efficiency gains enable several computational advantages for vesicle dynamics 
studies:
\begin{enumerate}
\item \textit{High-resolution long-time simulations}: Resolutions $N \geq 2048$ become 
feasible for extended time integration, critical for capturing fine interfacial 
structures and long-time morphological evolution.

\item \textit{Rapid parameter exploration}: Order-of-magnitude speedups facilitate 
systematic parameter sweeps, enabling more thorough investigation of model behavior 
across physically relevant regimes.

\item \textit{Complex multi-vesicle systems}: Computational savings can be allocated to 
increased spatial detail or additional vesicles, expanding the scope of accessible 
biological scenarios.
\end{enumerate}

The combination of unconditional energy stability, second-order accuracy, $O(N^2 \log N)$ 
complexity per time step, and low memory footprint (no assembled matrices) makes the 
proposed scheme particularly well-suited for large-scale computational investigations 
of biological membrane dynamics.

\paragraph{Limitations and scope}
The constant-coefficient reformulation achieves maximum efficiency on rectangular 
domains with separable homogeneous boundary conditions, enabling fast transform 
solution via DCT (Neumann, as implemented), DST (Dirichlet), or FFT (periodic), all 
with $O(N^2 \log N)$ complexity. For irregular geometries or inhomogeneous boundary 
conditions, the method requires adaptation (e.g., domain embedding or preconditioned iterative solvers), potentially reducing but not 
eliminating the performance advantage. The fully decoupled structure remains 
advantageous for implementation simplicity and parallel scalability even when 
specialized iterative methods become competitive on complex geometries. The 
embarrassingly parallel structure of batched transform operations also suggests 
significant potential for GPU acceleration, which remains a subject for future 
investigation.
\section{Numerical Results and Validation}
\label{sec:numerical_results}

In this section, we present numerical experiments demonstrating the accuracy and robustness of the proposed MSAV-based scheme. We begin by verifying the theoretical convergence rates through temporal, spatial, and coupled space–time studies (Section~\ref{sec:convergence_studies}), then validating physical accuracy against the established NLMG benchmark (Section~\ref{sec:benchmark}). Having confirmed correctness, we demonstrate that the constant-coefficient reformulation achieves significant computational efficiency without compromising accuracy (Section~\ref{sec:efficiency_accuracy}), and finally show solver robustness in geometrically complex single-vesicle scenarios (Section~\ref{sec:complex_dynamics}).

\subsection{Convergence Studies}
\label{sec:convergence_studies}

We verify the theoretical accuracy of the MSAV-BDF2 scheme through temporal 
and spatial convergence studies using the smooth elliptical initial condition 
from~\cite{tang2023phase}. All parameters are listed in 
Table~\ref{tab:benchmark_params} (\ref{sec:appendix_simulation_parameters}).

\paragraph{Temporal Convergence}

Temporal convergence is measured using the Cauchy error method, comparing 
solutions at time steps $\Delta t$ and $\Delta t/2$ at final time 
$T = 5 \times 10^{-3}$. We define the combined error for the coupled system as
\begin{equation}
\mathcal{E}_{\Delta t} = \sqrt{\|\phi^{\Delta t} - \phi^{\Delta t/2}\|_{L^2}^2 + \|\psi^{\Delta t} - \psi^{\Delta t/2}\|_{L^2}^2},
\end{equation}
with convergence rate $p = \log_2(\mathcal{E}_{\Delta t}/\mathcal{E}_{\Delta t/2})$. 
Tests use moderate stiffness $(\gamma_{\text{in}}, \gamma_{\text{out}}, \gamma_{\text{area}}) = (100, 100, 100)$ 
and osmotic equilibrium $\psi_{\text{in}} = 0.1$ on a fixed $512^2$ grid. Table~\ref{tab:temporal_convergence_cauchy} shows second-order convergence 
with combined rates of 2.08--2.73 (average 2.40), confirming BDF2 accuracy.

\begin{table}[H]
    \centering
    \caption{Temporal convergence rates for the coupled system at $T = 5 \times 10^{-3}$.}
    \label{tab:temporal_convergence_cauchy}
    \begin{tabular}{|c|c|c|c|c|c|}
        \hline
        \textbf{$\Delta t$} & \multicolumn{2}{c|}{\textbf{$\phi$}} & 
        \multicolumn{2}{c|}{\textbf{$\psi$}} & \textbf{Combined} \\
        \cline{2-6}
        & Error & Rate & Error & Rate & Rate \\
        \hline
        $1.0 \times 10^{-5}$ & $9.98 \times 10^{-5}$ & --- & $1.26 \times 10^{-2}$ & --- & --- \\
        $5.0 \times 10^{-6}$ & $1.26 \times 10^{-5}$ & 2.98 & $2.99 \times 10^{-3}$ & 2.08 & 2.08 \\
        $2.5 \times 10^{-6}$ & $8.82 \times 10^{-7}$ & 3.84 & $4.52 \times 10^{-4}$ & 2.73 & 2.73 \\
        $1.25 \times 10^{-6}$ & $7.81 \times 10^{-8}$ & 3.50 & $8.70 \times 10^{-5}$ & 2.38 & 2.38 \\
        \hline
    \end{tabular}
\end{table}

\paragraph{Spatial Convergence}

Spatial convergence is measured via numerical error between successive grid 
refinements $N \in \{128, 256, 512, 1024, 2048\}$ with fixed $\Delta t = 10^{-6}$ 
to ensure negligible temporal error. The error  defined as
$
\mathcal{E}_h = \|u^h - \mathcal{I}(u^{h/2})\|_{L^2},
$
where $\mathcal{I}$ denotes cubic interpolation to the coarser grid. The combined 
error is defined analogously. Table~\ref{tab:spatial_convergence} and 
Figure~\ref{fig:spatial_convergence} demonstrate second-order spatial accuracy 
with asymptotic rate 2.00 for both fields and the coupled system.

\begin{table}[H]
    \centering
    \caption{Spatial convergence rates at $T = 5 \times 10^{-3}$.}
    \label{tab:spatial_convergence}
    \begin{tabular}{|c|c|c|c|c|}
        \hline
        \textbf{$N$} & \textbf{$\phi$ Error} & \textbf{Rate} &
        \textbf{$\psi$ Error} & \textbf{Rate} \\
        \hline
        128  & $3.7852 \times 10^{-4}$ & ---  & $3.0688 \times 10^{-5}$ & ---  \\
        256  & $9.5905 \times 10^{-5}$ & 1.98 & $7.7845 \times 10^{-6}$ & 1.98 \\
        512  & $2.4057 \times 10^{-5}$ & 2.00 & $1.9532 \times 10^{-6}$ & 1.99 \\
        1024 & $6.0191 \times 10^{-6}$ & 2.00 & $4.8875 \times 10^{-7}$ & 2.00 \\
        \hline
    \end{tabular}
\end{table}

\begin{figure}[H]
    \centering
    \includegraphics[width=0.80\textwidth]{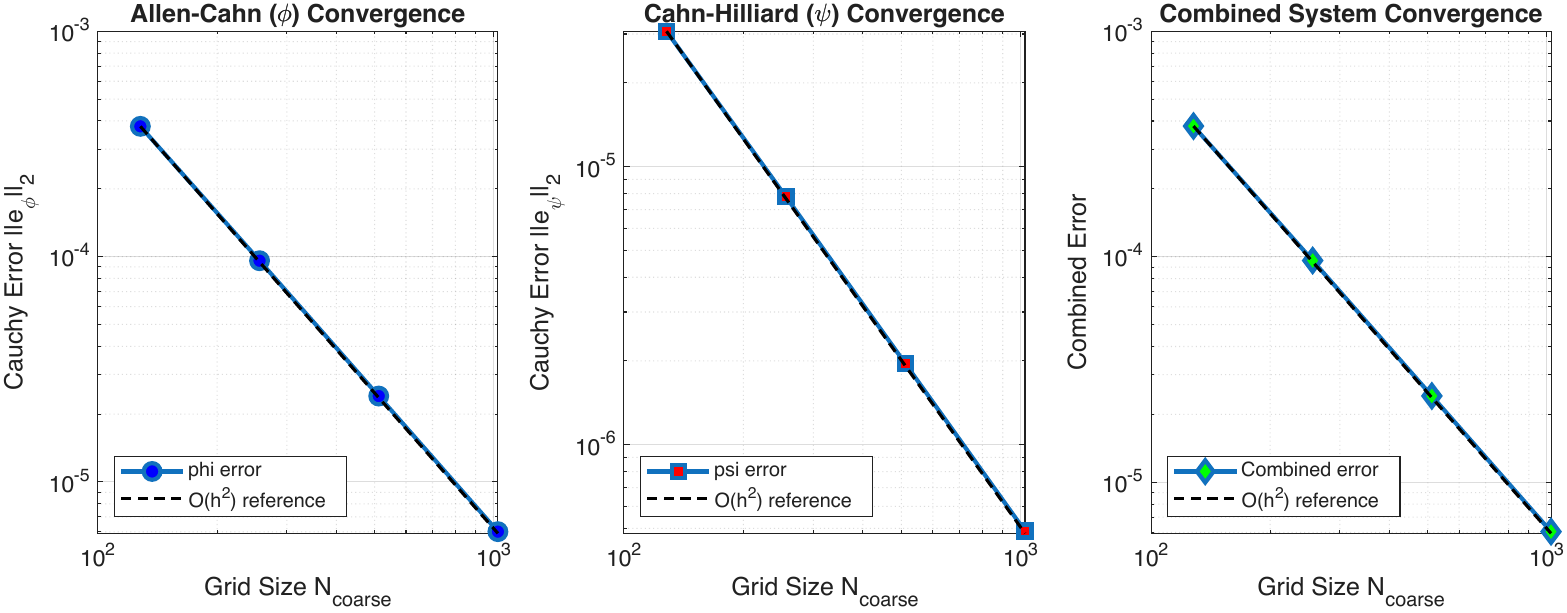}
    \caption{Spatial convergence for $\phi$ (left), $\psi$ (middle), and combined 
    system (right). Dashed lines show theoretical $O(h^2)$ rate.}
    \label{fig:spatial_convergence}
\end{figure}

\paragraph{Refinement-Path Convergence}
\label{sec:refinement_convergence}

To assess the overall convergence of the fully coupled scheme, we perform a
refinement-path test where spatial and temporal resolutions are refined
simultaneously according to $\Delta t = C h$ with $C = 0.002$. The
configuration parameters are listed in Table~\ref{tab:convergence_results}, with grid
sizes from $N_0 = 128$ to $N_{\max} = 2048$ and final time $T = 5 \times
10^{-3}$. The error between successive refinement levels is defined as
\begin{equation}
\mathcal{E}_{h,\Delta t} =
\sqrt{\|\phi^{(N)} - \mathcal{I}(\phi^{(2N)})\|_{L^2}^2 +
      \|\psi^{(N)} - \mathcal{I}(\psi^{(2N)})\|_{L^2}^2},
\end{equation}
where $\mathcal{I}$ denotes cubic interpolation of the finer solution to the
coarser grid. 

Table~\ref{tab:convergence_results} summarizes the coupled space–time
convergence results. The scheme exhibits consistent second-order accuracy for
both $\phi$ and $\psi$, with combined rates between 1.83 and 2.53 (average rate
$\approx 2.2$). These results confirm that the MSAV–BDF2 scheme maintains
global second-order accuracy under simultaneous space–time refinement.

\begin{table}[H]
\centering
\caption{Coupled space--time convergence results.}
\label{tab:convergence_results}
\resizebox{\textwidth}{!}{%
\begin{tabular}{ccccccccc}
\toprule
\textbf{Level} & $\boldsymbol{N}$ & $\boldsymbol{\Delta t}$ & $\boldsymbol{\|\mathcal{E}_\phi\|}$ & $\boldsymbol{r_\phi}$ & $\boldsymbol{\|\mathcal{E}_\psi\|}$ & $\boldsymbol{r_\psi}$ & $\boldsymbol{\|\mathcal{E}_{\text{comb}}\|}$ & $\boldsymbol{r_{\text{comb}}}$ \\
\midrule
1 & 128  & $1.563 \times 10^{-5}$ & $2.3757 \times 10^{-3}$ & ---   & $2.7560 \times 10^{-2}$ & ---   & $2.7662 \times 10^{-2}$ & --- \\
2 & 256  & $7.813 \times 10^{-6}$ & $5.8906 \times 10^{-4}$ & 2.012 & $5.2819 \times 10^{-3}$ & 2.383 & $5.3147 \times 10^{-3}$ & 2.381 \\
3 & 512  & $3.906 \times 10^{-6}$ & $1.4080 \times 10^{-4}$ & 2.065 & $1.4821 \times 10^{-3}$ & 1.833 & $1.4887 \times 10^{-3}$ & 1.835 \\
4 & 1024 & $1.953 \times 10^{-6}$ & $3.6117 \times 10^{-5}$ & 1.963 & $2.5566 \times 10^{-4}$ & 2.535 & $2.5820 \times 10^{-4}$ & 2.530 \\
\bottomrule
\end{tabular}
}
\end{table}

\subsection{Benchmark Validation}
\label{sec:benchmark}

We validate physical accuracy by comparing against the NLMG baseline 
solver~\cite{tang2023phase} for canonical osmotic growth and shrinkage scenarios. 
Both tests use the same elliptical initial condition for $\phi^0$ with distinct 
concentration profiles: $\psi^0 = -0.35\phi^0 + 0.45$ (growth) and 
$\psi^0 = -0.1\phi^0 + 0.7$ (shrinkage). All parameters follow 
Table~\ref{tab:benchmark_params} with $\varepsilon = 0.03125$, $N = 256$, 
$\Delta t = 10^{-6}$, providing $\sim$ 8 grid points per interface width.

Figures~\ref{fig:benchmark_growth}--\ref{fig:benchmark_shrinkage} demonstrate 
excellent agreement between both methods across all physical quantities: 
interface morphology, energy dissipation, arc length conservation, mass 
conservation, and concentration evolution. Both solvers correctly 
capture the multiphysics coupling and preserve thermodynamic consistency, validating the proposed MSAV scheme.

\begin{figure}[H]
    \centering
    \includegraphics[width=1.0\linewidth]{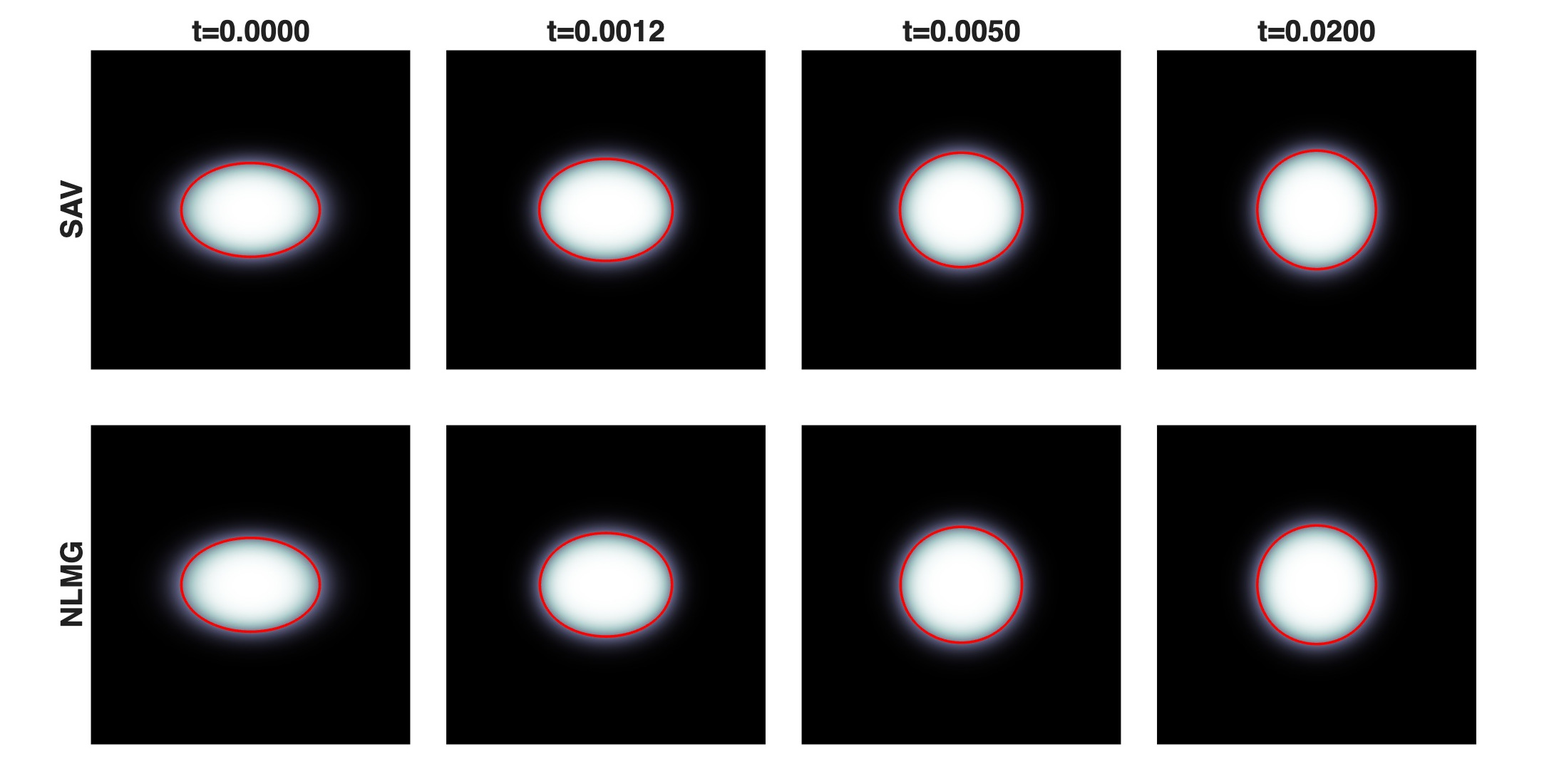}
    \includegraphics[width=0.93\linewidth]{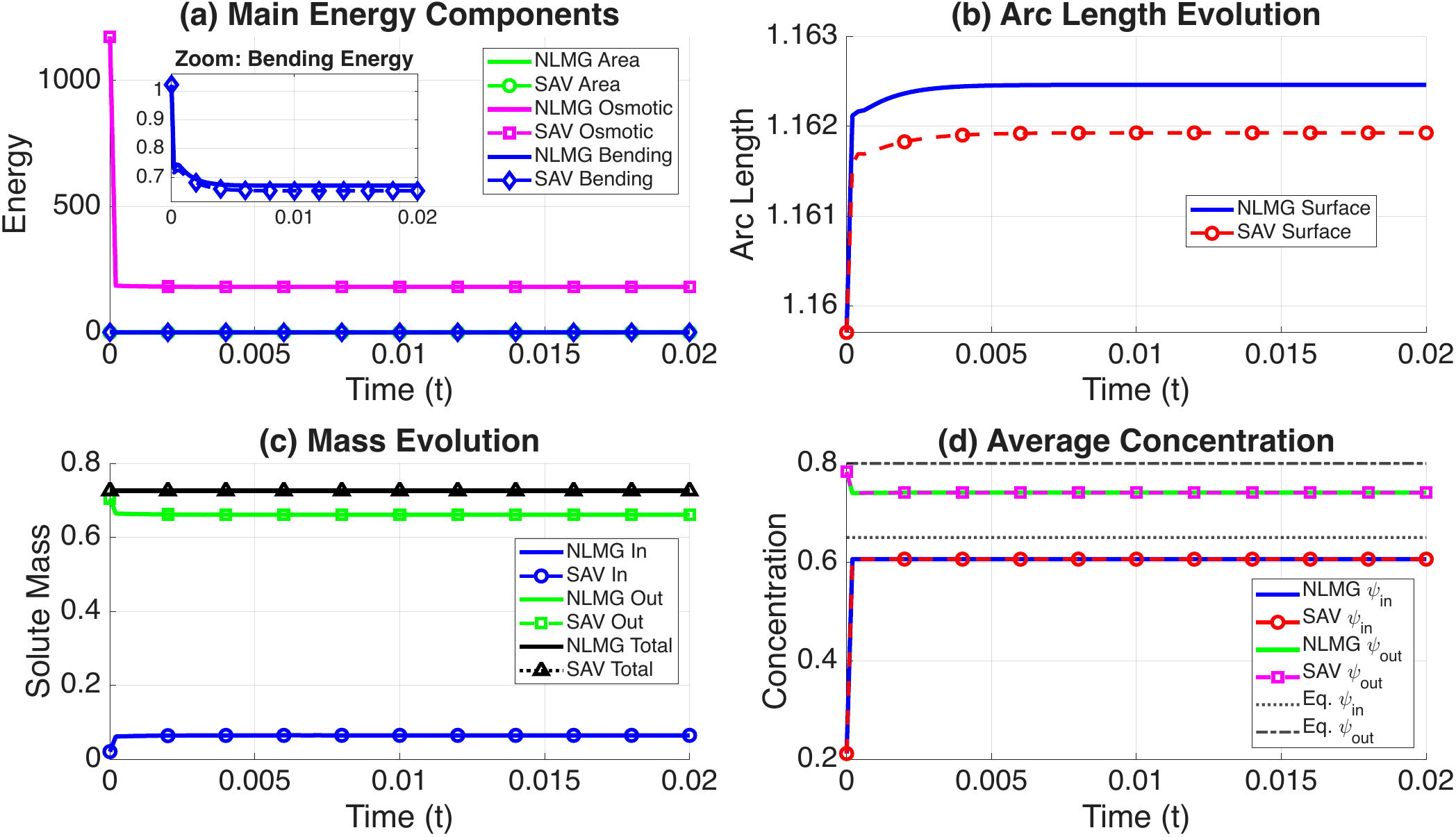}
    \caption{Elliptical growth validation. Top: Interface evolution (SAV: row 1; 
    NLMG: row 2) at $t = 0, 0.005, 0.01, 0.02$. Bottom: Energy components, arc 
    length, mass, and concentration dynamics. Excellent agreement validates 
    physical accuracy.}
    \label{fig:benchmark_growth}
\end{figure}

\begin{figure}[H]
    \centering
    \includegraphics[width=1.0\linewidth]{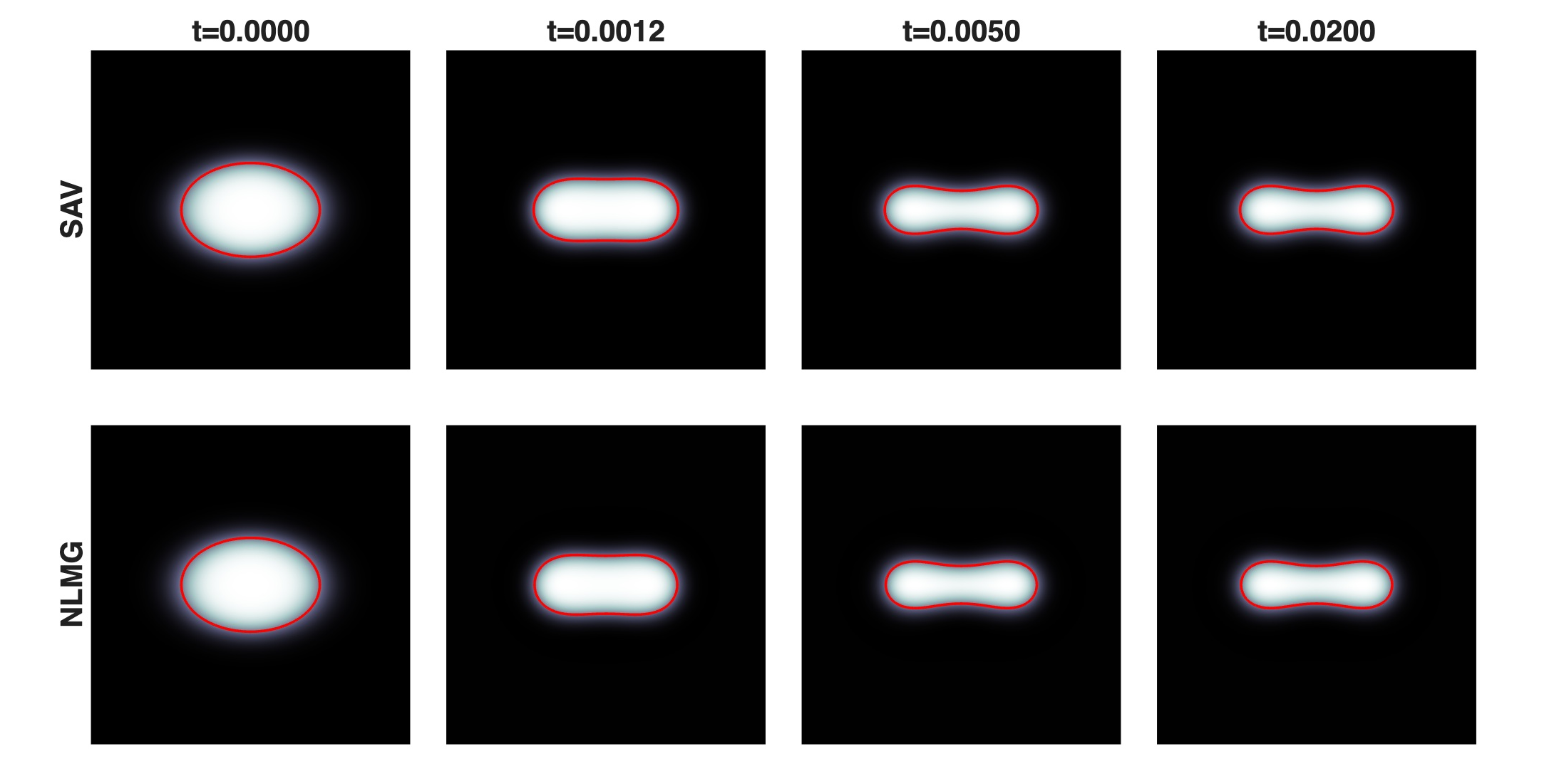}
    \includegraphics[width=0.93\linewidth]{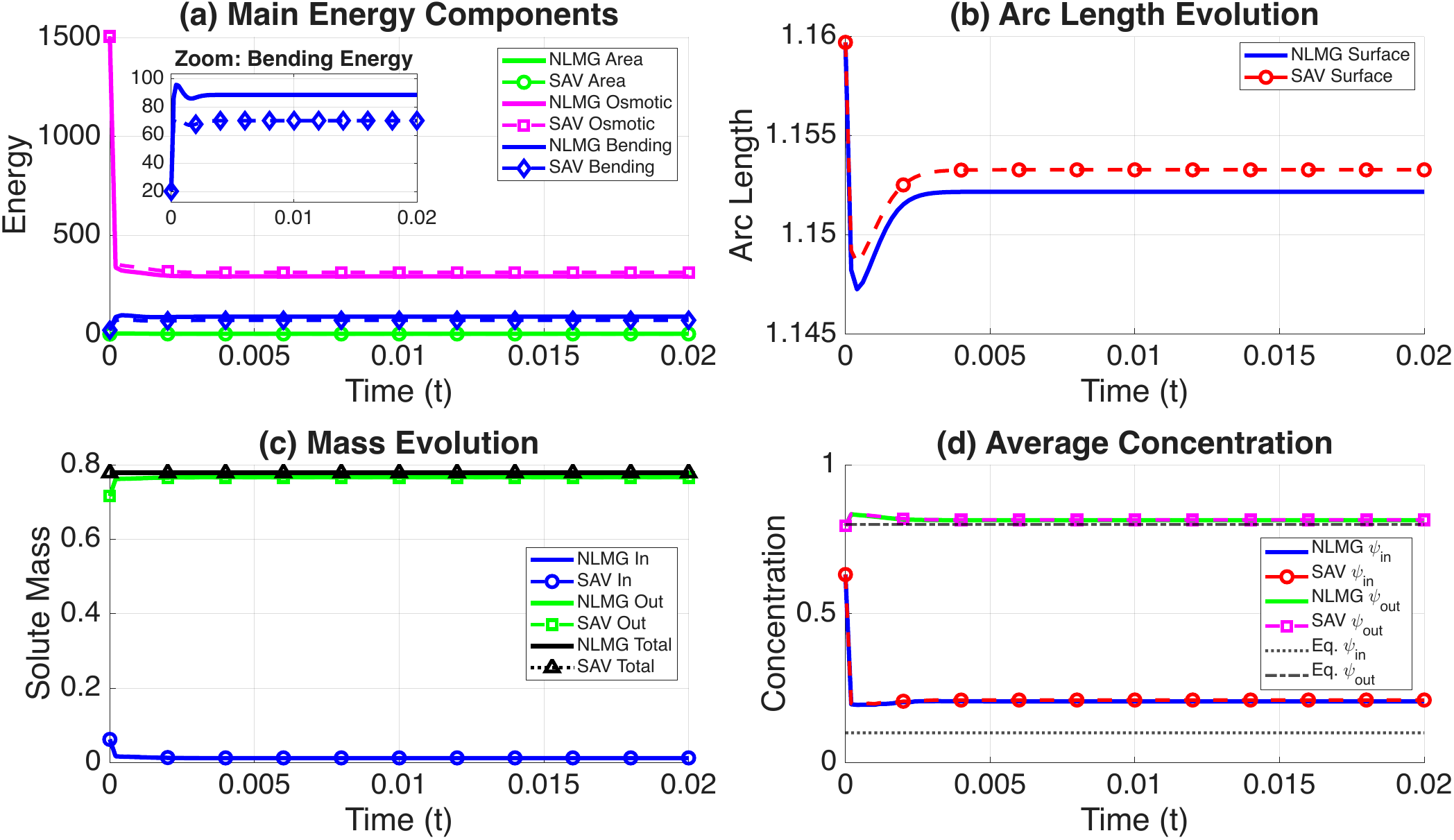}
    \caption{Elliptical shrinkage validation with $\gamma_{\text{bend}} = 1.0$ at 
    $t = 0, 0.0012, 0.005, 0.02$. Top: Morphology (SAV: row 1; NLMG: row 2). 
    Bottom: Quantitative comparison showing consistent dissipation, conservation, 
    and dynamics.}
    \label{fig:benchmark_shrinkage}
\end{figure}
\newpage
\subsection{Efficiency--Accuracy Validation}
\label{sec:efficiency_accuracy}
 Having validated both  convergence (Section~\ref{sec:convergence_studies}) 
and physical accuracy (Section~\ref{sec:benchmark}), we now examine whether the computational efficiency gains reported in Section~\ref{sec:computational_complexity} are obtained without any loss of accuracy. The validation is performed using the elliptical growth test described in Section~\ref{sec:benchmark}.

\begin{figure}[H]
    \centering
    \includegraphics[width=0.9\linewidth]{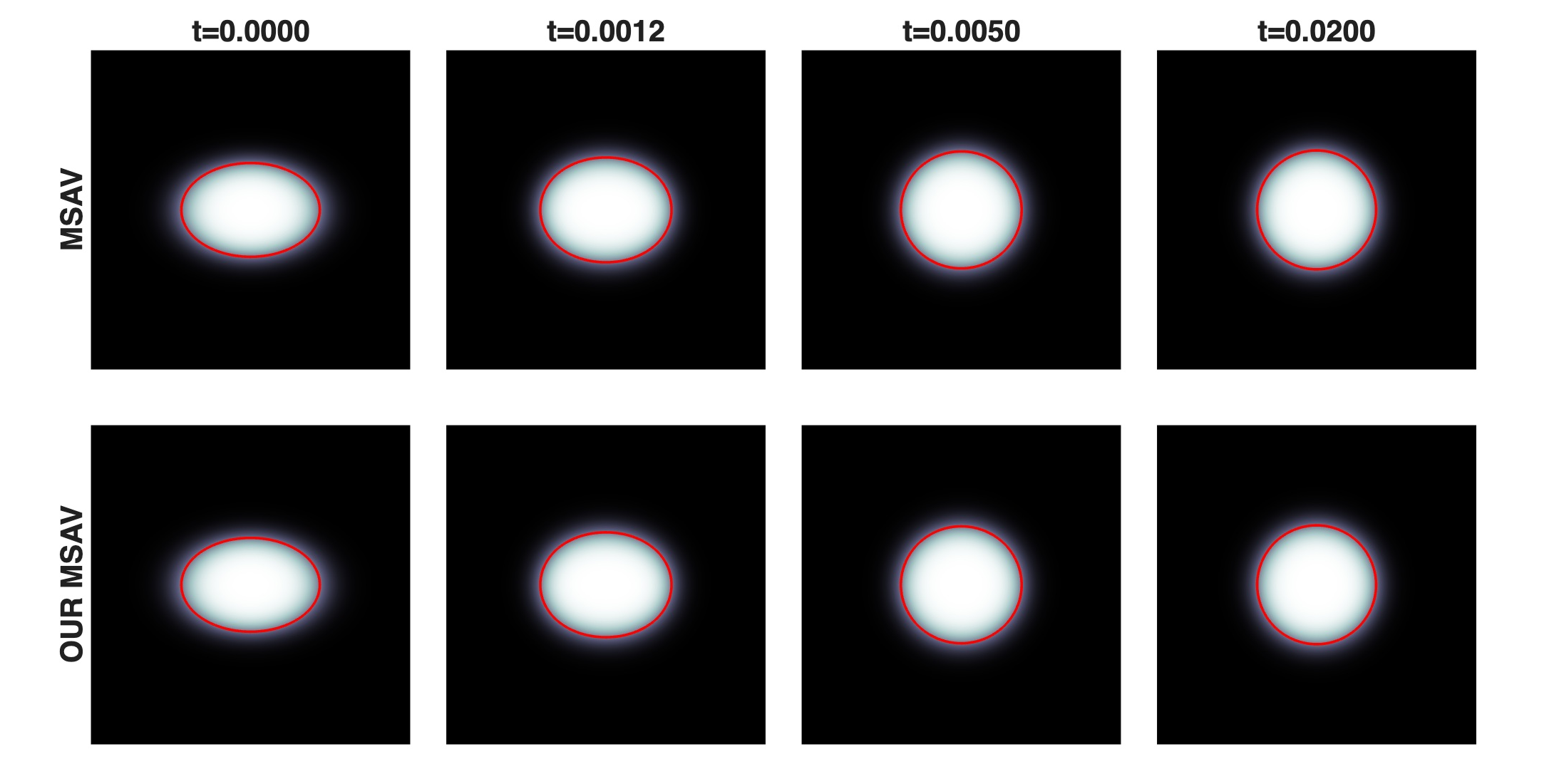}

    \includegraphics[width=0.9 \linewidth]{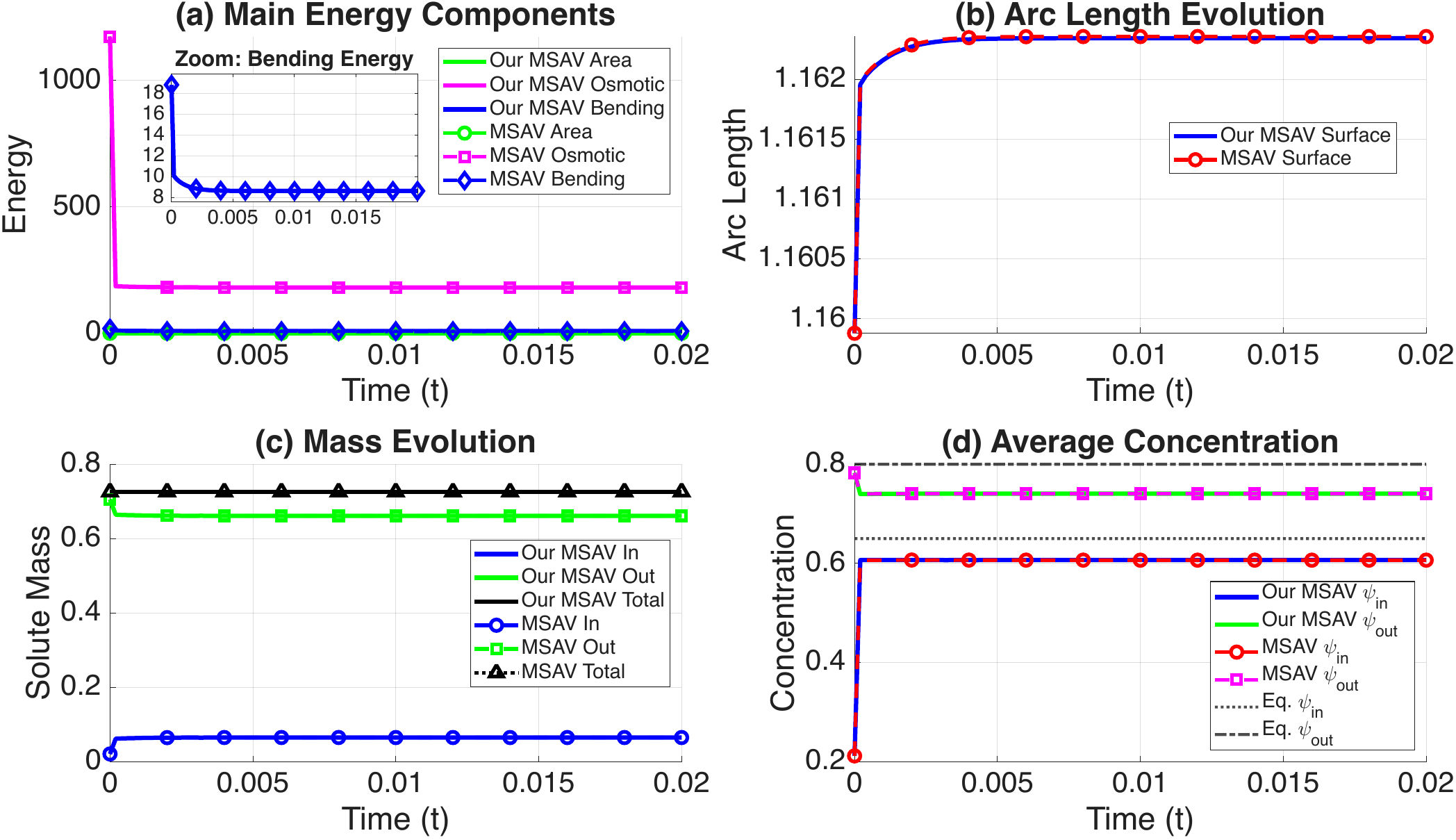}
    \caption{Accuracy comparison for the elliptical growth test. The first row corresponds to the classical MSAV method and the second row corresponds to the proposed CC-MSAV method. The top panels show the interface evolution at $t = 0$, $0.0012$, $0.005$, and $0.02$. The bottom panels contain the energy components and conservation metrics. The results remain visually and quantitatively indistinguishable, which confirms that the efficiency gains reported in Figure~\ref{fig:runtime_comparison} are obtained without a reduction in accuracy.}
    \label{fig:msav_comparison}
\end{figure}

\subsection{Robustness to Complex Geometries and Large Deformations}
\label{sec:complex_dynamics}

Having validated accuracy and efficiency on smooth test problems 
(Sections~\ref{sec:convergence_studies}--\ref{sec:efficiency_accuracy}), 
we now examine solver robustness under geometrically challenging conditions. A critical challenge for phase-field methods in vesicle dynamics is maintaining stability and conservation properties under large geometric deformations involving extreme morphologies. We evaluate the robustness of the proposed MSAV scheme through single-vesicle simulations with geometrically complex initial conditions that stress-test the method's ability to handle sharp corners, concave regions, and high local curvature. All simulations employ a well-resolved diffuse-interface representation with
$
\varepsilon = 0.01, \quad N = 2048,  \quad \Delta t = 5\times 10^{-6},
$
providing approximately twenty grid points across the interface. Non-elliptical initial shapes are smoothed using the procedure described in~\ref{sec:appendix_smoothing}. Unless otherwise specified, physical parameters follow Table~\ref{tab:benchmark_params}.
\vspace{0.3cm}\\
\textit{Growth dynamics.} 
We first consider volume expansion under hypertonic conditions with parameters $(\gamma_{\mathrm{bend}},\theta) = (0.05,10)$. Two distinct initial morphologies (triangle and star) are tested. As shown in Figure~\ref{fig:single_vesicle_gr_robustness}, both configurations undergo substantial rounding and evolve towards circular equilibrium configurations, consistent with energy minimization for expanding vesicles.

\begin{figure}[H]
    \centering
    \includegraphics[width=0.80\linewidth]{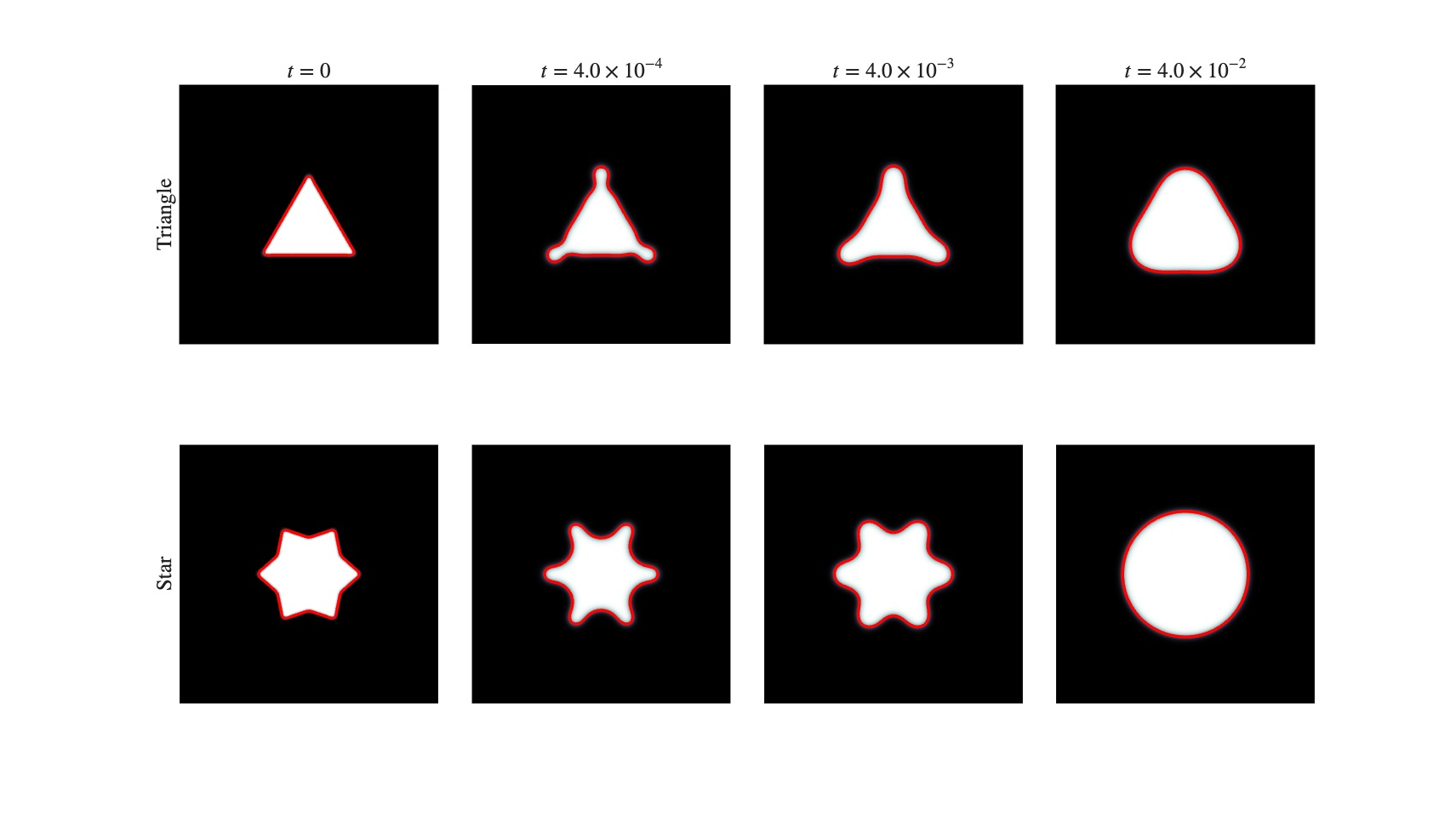}
    \caption{Single-vesicle growth under osmotic pressure ($\varepsilon = 0.01$, $N=2048$). Both morphologies exhibit strong rounding and evolution towards circular equilibrium configurations.}
    \label{fig:single_vesicle_gr_robustness}
\end{figure}

\noindent\textit{Shrinkage dynamics.} 
Shrinkage under hypotonic conditions \((\gamma_{\mathrm{bend}},\theta) = (0.1,30)\) presents a more stringent test. Figure~\ref{fig:single_vesicle_sh_robustness} demonstrates evolution from four geometrically diverse initial shapes: triangle, incomplete hexagon, star, and crescent configurations, spanning a range of topological features, including sharp corners, concave regions, and varying aspect ratios.

\begin{figure}[H]
    \centering
    \includegraphics[width=1.0\linewidth]{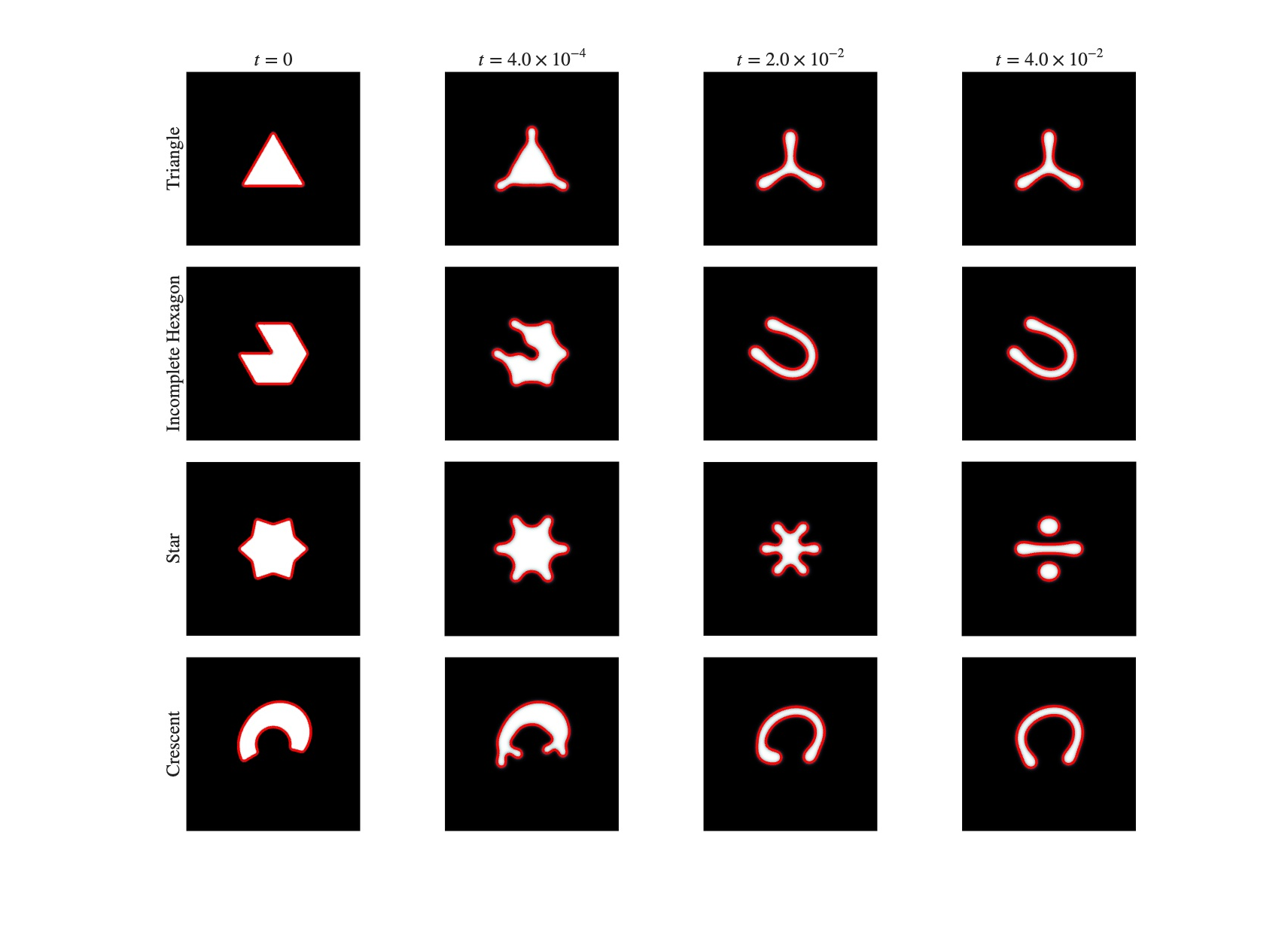}
    \includegraphics[width=0.85\linewidth]{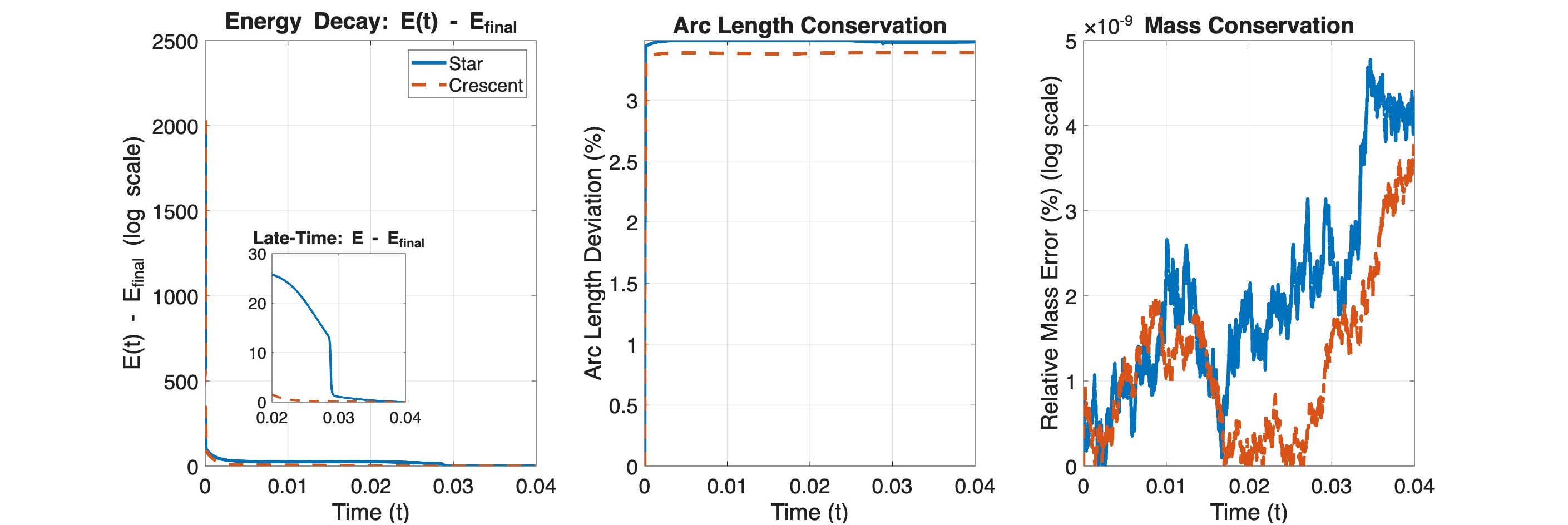}
    \caption{Single-vesicle shrinkage under osmotic pressure ($\varepsilon = 0.01$, $N=2048$). Top: Four morphologies demonstrating solver robustness. Bottom: physical energy evolution showing steady dissipation, arc length conservation, and mass conservation for star and crescent configurations.}
    \label{fig:single_vesicle_sh_robustness}
\end{figure}

Detailed diagnostics for two representative cases (star and crescent) are shown in the lower panels of Figure~\ref{fig:single_vesicle_sh_robustness}. Physical energy decays steadily toward steady states, with late-time convergence confirmed in the inset. Arc length deviations remain below 3.5\% throughout evolution. Relative mass conservation errors remain within $5 \times 10^{-11}$, confirming excellent numerical conservation properties. All four morphologies exhibit similar conservation characteristics.

\section{Conclusion and Future Work}
\label{sec:conclusion_and_future_work}

We have developed an efficient, unconditionally energy-stable numerical scheme for simulating vesicle deformation under osmotic pressure using a phase-field approach. The method addresses a fundamental computational bottleneck in variable-mobility Cahn--Hilliard equations arising in vesicle models: achieving constant-coefficient linear systems without sacrificing unconditional stability.

Our key innovation is a reformulated stabilization strategy that incorporates the stabilization term directly into the evolution equation rather than the chemical potential. This yields fully decoupled constant-coefficient elliptic operators solvable via fast DCT in $O(N^2 \log N)$ operations. The resulting method eliminates the iterative solvers required by previous approaches, achieving 6--15$\times$ speedup on production grids ($N \geq 2048$) while maintaining second-order accuracy. Unconditional energy stability is rigorously established in~\ref{sec:appendix_energy_stability_proof}, where we prove that the modified energy functional decays monotonically without time step restrictions. 

Comprehensive numerical experiments confirm the theoretical properties. The scheme exhibits second-order convergence in both space and time under successive refinement. Mass conservation is maintained to near-machine precision, with relative errors below $5 \times 10^{-11}$. Benchmark comparisons demonstrate close agreement with established nonlinear multigrid solvers for canonical growth and shrinkage scenarios. The modified energy decreases monotonically, as predicted by theory, while the physical energy exhibits steady dissipation in all test cases. The method handles complex geometries and non-convex initial shapes with topological changes robustly. Direct comparison between classical and constant-coefficient MSAV formulations confirms that efficiency gains come at no cost to accuracy.

Several directions for future work remain open. Coupling with Stokes equations would incorporate hydrodynamic effects in vesicle dynamics. Nonlinear osmotic models could capture asymmetric responses observed in red blood cell pathologies such as sickle cell disease. The constant-coefficient structure is well-suited for GPU acceleration and scalable parallel implementation, which would enable large-scale three-dimensional simulations. The combination of proven unconditional energy stability, second-order accuracy, optimal complexity scaling, and geometric flexibility makes the method particularly attractive for large-scale computational studies of biological membrane dynamics under osmotic stress.

\section*{Acknowledgments}
SL acknowledges partial support from the National Science Foundation (NSF), Division of Mathematical Sciences, Grant DMS-2309798. SMW acknowledges the support from NSF, Division of Mathematical Sciences, Grant DMS-2309547. JL acknowledges partial support from NSF, Division of Mathematical Sciences, Grant DMS-2309800.

\bibliographystyle{plain}
\bibliography{mybib}

@article{tang2023phase,
  title={Phase field modeling and computation of vesicle growth or shrinkage},
  author={Tang, Xiaoxia and Li, Shuwang and Lowengrub, John S and Wise, Steven M},
  journal={Journal of mathematical biology},
  volume={86},
  number={6},
  pages={97},
  year={2023},
  publisher={Springer}
}

@article{cheng2018multiple,
  title={Multiple scalar auxiliary variable (MSAV) approach and its application to the phase-field vesicle membrane model},
  author={Cheng, Qing and Shen, Jie},
  journal={SIAM Journal on Scientific Computing},
  volume={40},
  number={6},
  pages={A3982--A4006},
  year={2018},
  publisher={SIAM}
}

@article{wise2010unconditionally,
  title={Unconditionally stable finite difference, nonlinear multigrid simulation of the Cahn-Hilliard-Hele-Shaw system of equations},
  author={Wise, Steven M},
  journal={Journal of Scientific Computing},
  volume={44},
  number={1},
  pages={38--68},
  year={2010},
  publisher={Springer}
}

@article{shen2018scalar,
  title={The scalar auxiliary variable (SAV) approach for gradient flows},
  author={Shen, Jie and Xu, Jie and Yang, Jiang},
  journal={Journal of Computational Physics},
  volume={353},
  pages={407--416},
  year={2018},
  publisher={Elsevier}
}

@article{cahn1958free,
  title={Free energy of a nonuniform system. I. Interfacial free energy},
  author={Cahn, John W and Hilliard, John E},
  journal={The Journal of chemical physics},
  volume={28},
  number={2},
  pages={258--267},
  year={1958},
  publisher={American Institute of Physics}
}

@article{du2005phase,
  title={A phase field formulation of the Willmore problem},
  author={Du, Qiang and Liu, Chun and Ryham, Rolf and Wang, Xiaoqiang},
  journal={Nonlinearity},
  volume={18},
  number={3},
  pages={1249},
  year={2005},
  publisher={IOP Publishing}
}

@article{giga2017variational,
  title={Variational modeling and complex fluids},
  author={Giga, Mi-Ho and Kirshtein, Arkadz and Liu, Chun},
  journal={Handbook of mathematical analysis in mechanics of viscous fluids},
  pages={1--41},
  year={2017},
  publisher={Springer International Publishing, Cham}
}

@article{lowengrub2009phase,
  title={Phase-field modeling of the dynamics of multicomponent vesicles: Spinodal decomposition, coarsening, budding, and fission},
  author={Lowengrub, John S and R{\"a}tz, Andreas and Voigt, Axel},
  journal={Physical Review E—Statistical, Nonlinear, and Soft Matter Physics},
  volume={79},
  number={3},
  pages={031926},
  year={2009},
  publisher={APS}
}

@book{leveque2007finite,
  title={Finite difference methods for ordinary and partial differential equations: steady-state and time-dependent problems},
  author={LeVeque, Randall J},
  year={2007},
  publisher={SIAM}
}

@article{chen2019fast,
  title={Fast, provably unconditionally energy stable, and second-order accurate algorithms for the anisotropic Cahn--Hilliard model},
  author={Chen, Chuanjun and Yang, Xiaofeng},
  journal={Computer Methods in Applied Mechanics and Engineering},
  volume={351},
  pages={35--59},
  year={2019},
  publisher={Elsevier}
}

@article{bretin2023mobility,
  title={A mobility-SAV approach for a Cahn-Hilliard equation with degenerate mobilities},
  author={Bretin, Elie and Calatroni, Luca and Masnou, Simon},
  journal={arXiv preprint arXiv:2306.15329},
  year={2023}
}

@article{guo2025phase,
  title={Phase-Field Modeling and Energy-Stable Schemes for Osmotic Flow through Semi-Permeable},
  author={Guo, Ruihan and Shen, Jie and Xu, Shixin and Xu, Xianmin},
  journal={arXiv preprint arXiv:2506.11374},
  year={2025}
}

@article{zhang2024existence,
  author  = {ZHANG Senxiang and FENG Mei and LUO Hong},
  title   = {Existence and regularity of the solution of the triplet phase separation equation},
  journal = {Journal of Hainan University (Natural Science)},
  volume  = {42},
  number  = {4},
  pages   = {353--362},
  year    = {2024},
  doi     = {10.15886/j.cnki.hdxbzkb.2024012801}
}

@article{elani2014vesicle,
  title={Vesicle-based artificial cells as chemical microreactors with spatially segregated reaction pathways},
  author={Elani, Yuval and Law, Robert V and Ces, Oscar},
  journal={Nature communications},
  volume={5},
  number={1},
  pages={5305},
  year={2014},
  publisher={Nature Publishing Group UK London}
}

@article{mori2011model,
  title={A model of electrodiffusion and osmotic water flow and its energetic structure},
  author={Mori, Yoichiro and Liu, Chun and Eisenberg, Robert S},
  journal={Physica D: Nonlinear Phenomena},
  volume={240},
  number={22},
  pages={1835--1852},
  year={2011},
  publisher={Elsevier}
}

@article{vogl2014effect,
  title={The effect of glass-forming sugars on vesicle morphology and water distribution during drying},
  author={Vogl, CJ and Miksis, MJ and Davis, SH and Salac, D},
  journal={Journal of The Royal Society Interface},
  volume={11},
  number={99},
  pages={20140646},
  year={2014},
  publisher={The Royal Society}
}

@article{quaife2021hydrodynamics,
  title={Hydrodynamics of a semipermeable inextensible membrane under flow and confinement},
  author={Quaife, Bryan and Gannon, Ashley and Young, Y-N},
  journal={Physical Review Fluids},
  volume={6},
  number={7},
  pages={073601},
  year={2021},
  publisher={APS}
}

@article{seifert1997configurations,
  title={Configurations of fluid membranes and vesicles},
  author={Seifert, Udo},
  journal={Advances in physics},
  volume={46},
  number={1},
  pages={13--137},
  year={1997},
  publisher={Taylor \& Francis}
}

@article{veerapaneni2009boundary,
  title={A boundary integral method for simulating the dynamics of inextensible vesicles suspended in a viscous fluid in 2D},
  author={Veerapaneni, Shravan K and Gueyffier, Denis and Zorin, Denis and Biros, George},
  journal={Journal of Computational Physics},
  volume={228},
  number={7},
  pages={2334--2353},
  year={2009},
  publisher={Elsevier}
}

@article{veerapaneni2009numerical,
  title={A numerical method for simulating the dynamics of 3D axisymmetric vesicles suspended in viscous flows},
  author={Veerapaneni, Shravan K and Gueyffier, Denis and Biros, George and Zorin, Denis},
  journal={Journal of Computational Physics},
  volume={228},
  number={19},
  pages={7233--7249},
  year={2009},
  publisher={Elsevier}
}

@article{sohn2010dynamics,
  title={Dynamics of multicomponent vesicles in a viscous fluid},
  author={Sohn, Jin Sun and Tseng, Yu-Hau and Li, Shuwang and Voigt, Axel and Lowengrub, John S},
  journal={Journal of Computational Physics},
  volume={229},
  number={1},
  pages={119--144},
  year={2010},
  publisher={Elsevier}
}

@article{salac2011level,
  title={A level set projection model of lipid vesicles in general flows},
  author={Salac, David and Miksis, Michael},
  journal={Journal of Computational Physics},
  volume={230},
  number={22},
  pages={8192--8215},
  year={2011},
  publisher={Elsevier}
}

@article{liu2017dynamics,
  title={Dynamics of a multicomponent vesicle in shear flow},
  author={Liu, Kai and Marple, Gary R and Allard, Jun and Li, Shuwang and Veerapaneni, Shravan and Lowengrub, John},
  journal={Soft matter},
  volume={13},
  number={19},
  pages={3521--3531},
  year={2017},
  publisher={Royal Society of Chemistry}
}

@article{liu2014nonlinear,
  title={Nonlinear simulations of vesicle wrinkling},
  author={Liu, Kai and Li, Shuwang},
  journal={Mathematical Methods in the Applied Sciences},
  volume={37},
  number={8},
  pages={1093--1112},
  year={2014},
  publisher={Wiley Online Library}
}

@article{du2004phase,
  author = {Du, Qiang and Liu, Chun and Wang, Xiaoqiang},
  title = {A phase field approach in the numerical study of the elastic bending energy for vesicle membranes},
  journal = {Journal of Computational Physics},
  volume = {198},
  number = {2},
  pages = {450--468},
  year = {2004},
  doi = {10.1016/j.jcp.2004.01.029}
}

@article{wang2008modelling,
  title={Modelling and simulations of multi-component lipid membranes and open membranes via diffuse interface approaches},
  author={Wang, Xiaoqiang and Du, Qiang},
  journal={Journal of mathematical biology},
  volume={56},
  number={3},
  pages={347--371},
  year={2008},
  publisher={Springer}
}

@article{yang2017numerical,
  title={Numerical approximations for a three-component Cahn--Hilliard phase-field model based on the invariant energy quadratization method},
  author={Yang, Xiaofeng and Zhao, Jia and Wang, Qi and Shen, Jie},
  journal={Mathematical Models and Methods in Applied Sciences},
  volume={27},
  number={11},
  pages={1993--2030},
  year={2017},
  publisher={World Scientific}
}

@article{huang2023structure,
  title={A structure-preserving, upwind-SAV scheme for the degenerate Cahn--Hilliard equation with applications to simulating surface diffusion},
  author={Huang, Qiong-Ao and Jiang, Wei and Yang, Jerry Zhijian and Yuan, Cheng},
  journal={Journal of Scientific Computing},
  volume={97},
  number={3},
  pages={64},
  year={2023},
  publisher={Springer}
}

@article{orizaga2024imex,
  title={Imex methods for thin-film equations and cahn--hilliard equations with variable mobility},
  author={Orizaga, Saulo and Witelski, Thomas},
  journal={Computational Materials Science},
  volume={243},
  pages={113145},
  year={2024},
  publisher={Elsevier}
}

@article{xiao2023three,
  title={Three-dimensional numerical study on wrinkling of vesicles in elongation flow based on the immersed boundary method},
  author={Xiao, Wang and Liu, Kai and Lowengrub, John and Li, Shuwang and Zhao, Meng},
  journal={Physical Review E},
  volume={107},
  number={3},
  pages={035103},
  year={2023},
  publisher={APS}
}

@article{hausser2013thermodynamically,
  title={Thermodynamically consistent models for two-component vesicles},
  author={Hau{\ss}er, Frank and Li, S and Lowengrub, J and Marth, W and R{\"a}tz, Andreas and Voigt, Axel},
  journal={Int. J. Biomath. Biostat},
  volume={2},
  number={1},
  pages={19--48},
  year={2013}
}

@article{strang1999discrete,
  title={The discrete cosine transform},
  author={Strang, Gilbert},
  journal={SIAM review},
  volume={41},
  number={1},
  pages={135--147},
  year={1999},
  publisher={SIAM}
}

@book{trefethen2000spectral,
  title={Spectral methods in MATLAB},
  author={Trefethen, Lloyd N},
  year={2000},
  publisher={SIAM}
}

@article{schumann1988fast,
  title={Fast Fourier transforms for direct solution of Poisson's equation with staggered boundary conditions},
  author={Schumann, Ulrich and Sweet, Roland A},
  journal={Journal of Computational Physics},
  volume={75},
  number={1},
  pages={123--137},
  year={1988},
  publisher={Elsevier}
}

@article{shen1994efficient,
  title={Efficient spectral-Galerkin method I. Direct solvers of second-and fourth-order equations using Legendre polynomials},
  author={Shen, Jie},
  journal={SIAM Journal on Scientific Computing},
  volume={15},
  number={6},
  pages={1489--1505},
  year={1994},
  publisher={SIAM}
}

@article{kwan2007efficient,
  title={An efficient direct parallel spectral-element solver for separable elliptic problems},
  author={Kwan, Yuen-Yick and Shen, Jie},
  journal={Journal of Computational Physics},
  volume={225},
  number={2},
  pages={1721--1735},
  year={2007},
  publisher={Elsevier}
}

@article{quan2022decreasing,
  title={A decreasing upper bound of energy for time-fractional phase-field equations},
  author={Quan, Chaoyu and Tang, Tao and Wang, Boyi and Yang, Jiang},
  journal={arXiv preprint arXiv:2202.12192},
  year={2022}
}

\appendix
\section{Complete Proof of Unconditional Energy Stability}
\label{sec:appendix_energy_stability_proof}

This appendix provides the proof of Theorem~\ref{thm:energy_stability}. The overall proof methodology follows the standard MSAV-BDF2 framework established by Cheng and Shen~\cite{cheng2018multiple}. We focus on the key novelty: the \textit{Cahn--Hilliard stabilization} term $\lambda(\Delta\psi^{n+1}-\Delta\psi^{*,n+1})$ producing the remainder, and we absorb it as a difference between two accumulated sums. The coupled Allen--Cahn--Cahn--Hilliard structure with cross-phase coupling $K(\phi,\psi)$ and $P(\phi,\psi)$ is handled naturally within the MSAV framework through the auxiliary variable $Z^{n+1}$.

\subsection{Notation and BDF2 Identities}

Throughout, we use standard $L^2$ inner products $(\cdot,\cdot)$ and norms $\|\cdot\|$. Extrapolated quantities at time level $n+1$ are denoted with superscript $*$, e.g., $M_{\psi}^* = M_{\psi}(\phi^{*,n+1}) > 0$, $K^* = K(\phi^{*,n+1},\psi^{*,n+1})$, $P^* = P(\phi^{*,n+1},\psi^{*,n+1})$. The proof relies on the following BDF2 identities~\cite{cheng2018multiple}:

\begin{lemma}[BDF2 Identities]
\label{lem:bdf2_identities}
For any scalar or vector sequence $\{a^n\}$, the following identities hold:
\begin{align}
2\bigl(3a^{n+1}-4a^n+a^{n-1},\, a^{n+1}\bigr)
&= \|a^{n+1}\|^2 - \|a^n\|^2 \notag\\
&\quad + \|2a^{n+1}-a^n\|^2
      - \|2a^n-a^{n-1}\|^2 \notag\\
&\quad + \|a^{n+1}-2a^n+a^{n-1}\|^2,
\label{eq:bdf2_id1} \\[0.5ex]
2\bigl(3a^{n+1}-4a^n+a^{n-1},\, 2a^n-a^{n-1}\bigr)
&=
\left(
\begin{aligned}
&\|a^{n+1}\|^2 + \|2a^{n+1}-a^n\|^2 \\
&\quad - 2\|a^{n+1}-a^n\|^2
\end{aligned}
\right) \notag\\
&\quad -
\left(
\begin{aligned}
&\|a^{n}\|^2 + \|2a^{n}-a^{n-1}\|^2 \\
&\quad - 2\|a^{n}-a^{n-1}\|^2
\end{aligned}
\right) \notag\\
&\quad - 3\|a^{n+1}-2a^n+a^{n-1}\|^2.
\label{eq:bdf2_id2}
\end{align}
\end{lemma}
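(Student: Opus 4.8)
The plan is to prove both identities by \emph{direct expansion}, since each is a purely algebraic statement about an abstract inner product and depends only on bilinearity, symmetry, and the relation $\|x\|^2 = (x,x)$; nothing specific to the $L^2$ structure is used, so the scalar and vector cases are handled identically. First I would expand the left-hand side of \eqref{eq:bdf2_id1} by linearity into the three inner products $6\|a^{n+1}\|^2 - 8(a^{n+1},a^n) + 2(a^{n+1},a^{n-1})$. Then I would expand each of the four quadratic terms on the right---$\|a^{n+1}\|^2$, $\|2a^{n+1}-a^n\|^2$, $\|2a^n-a^{n-1}\|^2$, and the second-difference norm $\|a^{n+1}-2a^n+a^{n-1}\|^2$---into their constituent squares and cross products, and collect coefficients of the six basic quantities $\|a^{n+1}\|^2$, $\|a^n\|^2$, $\|a^{n-1}\|^2$, $(a^{n+1},a^n)$, $(a^{n+1},a^{n-1})$, and $(a^n,a^{n-1})$. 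Checking that the $\|a^n\|^2$, $\|a^{n-1}\|^2$, and $(a^n,a^{n-1})$ coefficients cancel to zero while the remaining three reproduce the left side establishes \eqref{eq:bdf2_id1}.

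For \eqref{eq:bdf2_id2} I would follow the same route with the multiplier $2a^n-a^{n-1}$ in place of $a^{n+1}$. A useful intermediate simplification is the observation that the bracketed combination $\|a^{n+1}\|^2 + \|2a^{n+1}-a^n\|^2 - 2\|a^{n+1}-a^n\|^2$ collapses to $3\|a^{n+1}\|^2 - \|a^n\|^2$, since the $(a^{n+1},a^n)$ cross terms cancel; the analogous collapse holds at the previous time level. This makes the telescoping structure of the right-hand side transparent before the second-difference term is subtracted, after which collecting coefficients yields the match.

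There is no genuine conceptual obstacle here: these are the standard G-stability relations for the BDF2 operator tested against $a^{n+1}$ and against its extrapolation $2a^n-a^{n-1}$, and the only real risk is arithmetic bookkeeping. The step demanding the most care is the expansion of $\|a^{n+1}-2a^n+a^{n-1}\|^2$, which produces all six square and cross terms with coefficients $(1,4,1,-4,2,-4)$; a sign error there propagates directly into the claimed cancellations, and in \eqref{eq:bdf2_id2} it is amplified by the factor $3$. Maintaining a coefficient table for the six basic quantities and verifying each column against the left-hand side---in particular the exact vanishing of the $\|a^{n+1}\|^2$ coefficient in \eqref{eq:bdf2_id2}---is the most reliable safeguard. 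It is worth recording that the nonnegative second-difference norms isolated by these identities are precisely the numerical dissipation later absorbed into $K^n$ in Theorem~\ref{thm:energy_stability}, which is exactly why the identities are cast in this telescoping-plus-dissipation form.
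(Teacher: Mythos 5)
Your proposal is correct: both identities follow exactly as you describe by bilinear expansion, your intermediate claims check out (the bracketed combination in \eqref{eq:bdf2_id2} does collapse to $3\|a^{n+1}\|^2-\|a^n\|^2$, and $\|a^{n+1}-2a^n+a^{n-1}\|^2$ expands with coefficients $(1,4,1,-4,2,-4)$ on the six basic quantities), and this coefficient-table verification is the standard argument for these G-stability relations. Note that the paper itself gives no proof at all---it states the lemma with a citation to Cheng and Shen's MSAV paper---so your self-contained expansion supplies precisely what the paper delegates to the reference.
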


\subsection{Weak Formulation and Completion of Squares}

Taking the $L^2$ inner product of~\eqref{eq:AC_BDF2} with $2\Delta t\,\mu^{n+1}$ and~\eqref{eq:CH_BDF2} with $2\Delta t\,\nu^{n+1}$, and using integration by parts with homogeneous Neumann boundary conditions:
\begin{multline}
(3\phi^{n+1}-4\phi^n+\phi^{n-1},\mu^{n+1})
+(3\psi^{n+1}-4\psi^n+\psi^{n-1},\nu^{n+1}) \\
=-2\Delta t \Big[M_{\phi}\|\mu^{n+1}\|^2
+(\nabla\nu^{n+1},M_{\psi}^*\nabla\nu^{n+1})
+\lambda(\nabla\nu^{n+1},\nabla\psi^{n+1}-\nabla\psi^{*,n+1})\Big].
\label{eq:weak_form_full}
\end{multline}

\paragraph{Handling the stabilization term} 
Define the auxiliary quantities:
\begin{align}
c^{n+1} &= \sqrt{M_{\psi}^*}\,\nabla\nu^{n+1}, \label{eq:c_def}\\
d^{n+1} &= \frac{\lambda(\nabla\psi^{n+1}-\nabla\psi^{*,n+1})}{\sqrt{M_{\psi}^*}}. \label{eq:d_def}
\end{align}
Completing the square, the Cahn--Hilliard dissipation terms in~\eqref{eq:weak_form_full} become:
\begin{align}
&(\nabla\nu^{n+1},M_{\psi}^*\nabla\nu^{n+1})
+\lambda(\nabla\nu^{n+1},\nabla\psi^{n+1}-\nabla\psi^{*,n+1}) \notag\\
&\qquad= \|c^{n+1}\|^2 + (c^{n+1},d^{n+1}) 
= \left\|c^{n+1}+\tfrac{1}{2}d^{n+1}\right\|^2 - \tfrac{1}{4}\|d^{n+1}\|^2.
\label{eq:completion_key}
\end{align}
Thus~\eqref{eq:weak_form_full} becomes:
\begin{multline}
(3\phi^{n+1}-4\phi^n+\phi^{n-1},\mu^{n+1})
+(3\psi^{n+1}-4\psi^n+\psi^{n-1},\nu^{n+1}) \\
=-2\Delta t \Big[M_{\phi}\|\mu^{n+1}\|^2
+\left\|c^{n+1}+\tfrac{1}{2}d^{n+1}\right\|^2\Big]
+\frac{\Delta t}{2}\|d^{n+1}\|^2.
\label{eq:weak_final}
\end{multline}

\subsection{Energy Decomposition}

We expand the left-hand side of~\eqref{eq:weak_final} by substituting the chemical potential definitions~\eqref{eq:mu_BDF2}--\eqref{eq:nu_BDF2}. 

\subsubsection{Standard Energy Terms}

For all standard (implicitly treated) energy terms (surface, bending, area, and scalar auxiliary variables $V, U, W$), we apply integration by parts, followed by identities~\eqref{eq:bdf2_id1}--\eqref{eq:bdf2_id2}. Each contribution decomposes into:
\begin{itemize}
\item Energy differences between consecutive time levels: $\|a^{n+1}\|^2 - \|a^n\|^2 + \|2a^{n+1}-a^n\|^2 - \|2a^n-a^{n-1}\|^2$;
\item Non-negative numerical dissipation: $\|a^{n+1}-2a^n+a^{n-1}\|^2 \geq 0$,
\end{itemize}
where $a$ represents $\Delta\phi$, $\nabla\phi$, $\phi$, or scalar auxiliary variables. The algebraic manipulations are identical to~\cite[Theorem 3.2]{cheng2018multiple}.

\subsubsection{Cross-Phase Coupling Through Auxiliary Variable $Z$}

From the Allen--Cahn chemical potential~\eqref{eq:mu_BDF2}, the coupling term $K^*$ contributes:
\begin{equation}
(3\phi^{n+1}-4\phi^n+\phi^{n-1}, Z^{n+1}K^*).
\label{eq:ac_coupling}
\end{equation}

From the Cahn--Hilliard side, the coupling term $P^*$ contributes:
\begin{equation}
(3\psi^{n+1}-4\psi^n+\psi^{n-1}, Z^{n+1}P^*).
\label{eq:ch_coupling}
\end{equation}

Combining~\eqref{eq:ac_coupling}--\eqref{eq:ch_coupling} with the auxiliary variable evolution equation~\eqref{eq:Z_BDF2}, we obtain:
\begin{equation}
\begin{aligned}
&(3\phi^{n+1}-4\phi^n+\phi^{n-1}, Z^{n+1}K^*)+(3\psi^{n+1}-4\psi^n+\psi^{n-1}, Z^{n+1}P^*)\\
&=(3Z^{n+1}-4Z^{n}+Z^{n-1},Z^{n+1}).
\end{aligned}
\label{eq:coupling_combined}
\end{equation}
Applying identity~\eqref{eq:bdf2_id1} with $a=Z$ to expand the right-hand side produces energy differences and numerical dissipation associated with the auxiliary variable $Z$.

\subsubsection{Treatment of the Stabilization Remainder}

The key novelty in our energy analysis is handling the positive remainder $\frac{\Delta t}{2}\|d^{n+1}\|^2$ that appears on the right-hand side of equation~\eqref{eq:weak_final}. Following the spirit of modified energy approaches for fractional equations \cite{quan2022decreasing}, we rewrite this single-step contribution as a difference between two cumulative sums:
\begin{equation}
\frac{\Delta t}{2}\|d^{n+1}\|^2 = \left(\sum_{j=2}^{n+1} \tfrac{\Delta t}{2}\,\|d^{j}\|^2\right) - \left(\sum_{j=2}^{n} \tfrac{\Delta t}{2}\,\|d^{j}\|^2\right).
\label{eq:telescoping_stab}
\end{equation}

Moving this telescoping difference to the left-hand side of~\eqref{eq:weak_final} and incorporating it into the energy balance yields:
\begin{multline}
(3\phi^{n+1}-4\phi^n+\phi^{n-1},\mu^{n+1})
+(3\psi^{n+1}-4\psi^n+\psi^{n-1},\nu^{n+1}) \\
- \left(\sum_{j=2}^{n+1} \tfrac{\Delta t}{2}\,\|d^{j}\|^2\right) + \left(\sum_{j=2}^{n} \tfrac{\Delta t}{2}\,\|d^{j}\|^2\right) \\
=-2\Delta t \Big[M_{\phi}\|\mu^{n+1}\|^2
+\left\|c^{n+1}+\tfrac{1}{2}d^{n+1}\right\|^2\Big].
\label{eq:weak_with_stab_sum}
\end{multline}

The cumulative sum $-\sum_{j=2}^{n+1} \tfrac{\Delta t}{2}\,\|d^{j}\|^2$ now appears as part of the modified energy at time level $n+1$, while $-\sum_{j=2}^{n} \tfrac{\Delta t}{2}\,\|d^{j}\|^2$ appears in the energy at time level $n$. This structure ensures that the stabilization contribution accumulates properly across time steps while maintaining the overall energy dissipation property.

\subsection{Assembly and Final Energy Balance}

Collecting all contributions from the standard terms, cross-phase coupling, and stabilization, the left-hand side of~\eqref{eq:weak_with_stab_sum} equals:
\begin{equation}
E^{n+1,n}_{\mathrm{mod}} - E^{n,n-1}_{\mathrm{mod}} + K^n,
\label{eq:energy_difference}
\end{equation}
where $E^{n+1,n}_{\mathrm{mod}}$ is the modified discrete energy~\eqref{eq:modified_discrete_energy}, and $K^n$ collects all non-negative numerical dissipation arising from the BDF2 second-order differences. Explicitly:
\begin{align}
K^n &= \frac{1}{2}\gamma_{\mathrm{bend}} \frac{3\sqrt{2} \varepsilon}{8} \|\Delta\phi^{n+1}-2\Delta\phi^n+\Delta\phi^{n-1}\|^2 \notag\\
&\quad+ \frac{1}{2}\left(\gamma_{\mathrm{surf}} \frac{3\sqrt{2} \varepsilon}{4} + \theta\right) \|\nabla\phi^{n+1}-2\nabla\phi^n+\nabla\phi^{n-1}\|^2 \notag\\
&\quad+ \frac{3\tilde{\theta}}{2} \|\nabla\phi^{n+1}-2\nabla\phi^n+\nabla\phi^{n-1}\|^2 \notag\\
&\quad+ \frac{1}{2}\gamma_{\mathrm{surf}} \frac{3\sqrt{2} \beta}{4\varepsilon} \|\phi^{n+1}-2\phi^n+\phi^{n-1}\|^2 \notag\\
&\quad+ \frac{1}{2}\gamma_{\mathrm{surf}} \frac{3\sqrt{2}}{2} (V^{n+1}-2V^n+V^{n-1})^2 \notag\\
&\quad+ \frac{1}{2}\gamma_{\mathrm{area}} (U^{n+1}-2U^n+U^{n-1})^2 \notag\\
&\quad+ \frac{1}{2}\gamma_{\mathrm{bend}} \frac{3\sqrt{2}}{8\varepsilon} (W^{n+1}-2W^n+W^{n-1})^2 \notag\\
&\quad+ \frac{1}{2}(Z^{n+1}-2Z^n+Z^{n-1})^2 \geq 0.
\label{eq:K_n_explicit}
\end{align}

Combining~\eqref{eq:energy_difference} with~\eqref{eq:weak_with_stab_sum} and dividing by $2\Delta t$:
\begin{equation}
\frac{E^{n+1,n}_{\mathrm{mod}} - E^{n,n-1}_{\mathrm{mod}}}{2\Delta t} + \frac{K^n}{2\Delta t}
= -\Big[M_{\phi}\|\mu^{n+1}\|^2 + \big\|c^{n+1}+\tfrac{1}{2}d^{n+1}\big\|^2\Big] \leq 0.
\label{eq:energy_balance_final}
\end{equation}
This establishes the unconditional energy stability stated in Theorem~\ref{thm:energy_stability}.
\qed
\section{Numerical Solution of the Fully Discretized System}
\label{sec:appendix_solution_procedure}

In this section, we detail the procedure for solving the coupled linear systems arising from the fully discretized schemes. The core of our approach is to efficiently reduce the large system for the field variables ($\phi^{n+1}$, $\psi^{n+1}$) to a small, dense linear system for a few unknown scalar integrals. This strategy, common in SAV-based methods, avoids the need to solve a large, non-local system directly.

The procedure is nearly identical for both the second-order BDF2 and first-order BDF1 schemes. We present the derivation for the BDF2 case; the BDF1 case follows by substituting the BDF1 operators for their BDF2 counterparts.

\subsection*{Step 1: Expressing SAVs in Terms of Field Variables}
First, we rearrange the discrete evolution equations for the SAVs (Equations~\eqref{eq:V_BDF2}--\eqref{eq:Z_BDF2}) to express the new SAVs ($V^{n+1}$, $U^{n+1}$, $W^{n+1}$, $Z^{n+1}$) as functions of known past values and unknown integrals involving the new field variables $\phi^{n+1}$ and $\psi^{n+1}$. For example, for $V^{n+1}$:
\begin{equation}
    V^{n+1} = \frac{1}{3}\left(4V^n - V^{n-1} + \int_{\Omega} S(\phi^{*,n+1})(3\phi^{n+1} - 4\phi^n + \phi^{n-1})\,d\mathbf{x}\right).
\end{equation}

\subsection*{Step 2: Rearrangement into Linear System Form}

With the SAVs expressed in terms of the field variables, we substitute them back into the discretized Allen--Cahn and Cahn--Hilliard equations. This substitution results in a large, fully coupled system. The crucial task is to algebraically rearrange this system to isolate all terms involving the unknown fields at the new time step, $\phi^{n+1}$ and $\psi^{n+1}$.

Our strategy places all implicit terms on the left-hand side (LHS) and all explicit terms (i.e., terms from previous time steps $n$ and $n-1$) on the right-hand side (RHS). This process transforms the equations into a structure amenable to linear solution, which we detail in the subsequent steps. For clarity, we present the rearrangement for each governing equation separately.

\vspace{1em}

\noindent\textbf{Allen--Cahn Equation}\\
After substitution and rearrangement, the Allen--Cahn equation takes the following form, where the LHS contains all operators acting on the unknown fields $\phi^{n+1}$ and $\psi^{n+1}$:
\begin{align}
\Biggl\{
&\frac{3}{2\Delta t}
 + M_{\phi}\gamma_{\text{surf}}\frac{3\sqrt{2}\beta}{4\varepsilon}
\notag\\
&\quad
 - M_{\phi}\Bigl(\gamma_{\text{surf}}\frac{3\sqrt{2}\varepsilon}{4}+\theta\Bigr)\Delta
 + M_{\phi}\gamma_{\text{bend}}\frac{3\sqrt{2}\varepsilon}{8}\Delta^2
\Biggr\}\phi^{n+1}
\notag\\
&+ M_{\phi}\gamma_{\text{surf}}\frac{3\sqrt{2}}{2}
   \Bigl(\int_{\Omega} S(\phi^{*,n+1})\phi^{n+1}\,d\mathbf{x}\Bigr)
   S(\phi^{*,n+1})
\notag\\
&+ M_{\phi}\gamma_{\text{area}}
   \Bigl(\int_{\Omega} H(\phi^{*,n+1})\phi^{n+1}\,d\mathbf{x}\Bigr)
   H(\phi^{*,n+1})
\notag\\
&+ M_{\phi}\gamma_{\text{bend}}\frac{3\sqrt{2}}{8\varepsilon}
   \Bigl(\int_{\Omega} Q(\phi^{*,n+1})\phi^{n+1}\,d\mathbf{x}\Bigr)
   Q(\phi^{*,n+1})
\notag\\
&+ M_{\phi}
   \Bigl(\int_{\Omega} K(\phi^{*,n+1},\psi^{*,n+1})\phi^{n+1}\,d\mathbf{x}\Bigr)
   K(\phi^{*,n+1},\psi^{*,n+1})
\notag\\
&+ M_{\phi}
   \Bigl(\int_{\Omega} P(\phi^{*,n+1},\psi^{*,n+1})\psi^{n+1}\,d\mathbf{x}\Bigr)
   K(\phi^{*,n+1},\psi^{*,n+1})
= f^n .
\end{align}
The RHS, denoted by $f^n$, consolidates all explicit terms:
\begin{align}
f^n
&=\frac{4\phi^n-\phi^{n-1}}{2\Delta t}
 - M_{\phi}\Biggl\{
\left(\theta+\gamma_{\text{bend}}\frac{3\sqrt{2}}{4\varepsilon}\right)\Delta\phi^{*,n+1}
\notag\\
&\quad
+\gamma_{\text{surf}} \frac{3 \sqrt{2}}{6}
\Bigl[4V^{n}-V^{n-1}
+\int_{\Omega} S\!\left(\phi^{*,n+1}\right)\left(-4\phi^n+\phi^{n-1}\right)\,d\mathbf{x}\Bigr]
\, S(\phi^{*,n+1})
\notag\\
&\quad
+\frac{\gamma_{\text{area}}}{3}
\Bigl[4U^{n}-U^{n-1}
+\int_{\Omega} H\!\left(\phi^{*,n+1}\right)\left(-4\phi^n+\phi^{n-1}\right)\,d\mathbf{x}\Bigr]
\, H(\phi^{*,n+1})
\notag\\
&\quad
+\gamma_{\text{bend}} \frac{3 \sqrt{2}}{24 \varepsilon}
\Bigl[4W^{n}-W^{n-1}
+\int_{\Omega} Q\!\left(\phi^{*,n+1}\right)\left(-4\phi^n+\phi^{n-1}\right)\,d\mathbf{x}\Bigr]
\notag\\
&\quad\quad \times Q(\phi^{*,n+1})
\notag\\
&\quad
+ \frac{1}{3}\Biggl[4Z^{n}-Z^{n-1}
+\int_{\Omega} K\!\left(\phi^{*,n+1},\psi^{*,n+1}\right)\left(-4\phi^n+\phi^{n-1}\right)\,d\mathbf{x}
\notag\\
&\quad\quad
+\int_{\Omega} P\!\left(\phi^{*,n+1},\psi^{*,n+1}\right)\left(-4\psi^n+\psi^{n-1}\right)\,d\mathbf{x}
\Biggr]K(\phi^{*,n+1},\psi^{*,n+1})
\Biggr\}.
\end{align}

\vspace{1em}

\noindent\textbf{Cahn--Hilliard Equation}\\
A similar procedure is applied to the Cahn--Hilliard equation. To maintain linearity, the phase-dependent mobility $M_{\psi}(\phi^{n+1})$ is treated semi-implicitly by evaluating it at $\phi^{*,n+1}$. The final rearranged form is
\begin{align}
    &\left(\frac{3}{2\Delta t}
    -\lambda\Delta\right)\psi^{n+1}\notag\\
    &-\left[\int_{\Omega}K(\phi^{*,n+1},\psi^{*,n+1})\phi^{n+1}\,d\mathbf{x}\right]\nabla \cdot \left( M_{\psi}(\phi^{*,n+1}) \nabla P\left(\phi^{*,n+1},\psi^{*,n+1}\right)    \right)\notag\\
    &-\left[\int_{\Omega}P(\phi^{*,n+1},\psi^{*,n+1})\psi^{n+1}\,d\mathbf{x}\right]\nabla \cdot \left( M_{\psi}(\phi^{*,n+1}) \nabla P\left(\phi^{*,n+1},\psi^{*,n+1}\right)    \right)\notag\\
    &=g^n.
\end{align}
The RHS, $g^n$, containing all explicit terms, is given by
\begin{align}
    g^n &= \frac{4\psi^n-\psi^{n-1}}{2\Delta t}-\lambda\Delta\psi^{*,n+1}+\nabla \cdot \left( M_{\psi}(\phi^{*,n+1}) \nabla \tilde{\nu}^{n+1} \right),
\end{align}
where $\tilde{\nu}^{n+1}$, the explicit portion of the chemical potential, is defined as
\begin{align}
    \tilde{\nu}^{n+1}&= \frac{1}{3}\Biggl[4Z^{n}-Z^{n-1}+\int_{\Omega} K\left(\phi^{*,n+1},\psi^{*,n+1}\right)\left(-4\phi^n+\phi^{n-1}\right) \,d\mathbf{x}\notag\\
    &\quad+\int_{\Omega} P\left(\phi^{*,n+1},\psi^{*,n+1}\right)\left(-4\psi^n+\psi^{n-1}\right) \,d\mathbf{x}\Biggr]P\left(\phi^{*,n+1},\psi^{*,n+1}\right).       
\end{align}

This rearrangement successfully prepares the system for the decoupled solution procedure described next.

\subsection*{Step 3: Decoupling via Linear Operators}
To solve the coupled system, we first decouple the spatial differential operators from the integral terms. This is achieved by defining two linear, constant-coefficient elliptic operators, $\chi$ and $\zeta$, which collect all implicit linear terms for $\phi$ and $\psi$, respectively.

The operator for the Allen--Cahn equation, $\chi$, is defined as
\begin{align}
    \chi(\phi) &:= C_{1,\chi} \phi - C_{2,\chi} \Delta \phi + C_{3,\chi}\Delta^2 \phi,\label{eq:chi_operator}
\end{align}
where 
\begin{align}
C_{1,\chi}&=\frac{3}{2\Delta t} + M_{\phi} \gamma_{\text{surf}} \frac{3\sqrt{2}\beta}{4\varepsilon}, \\
C_{2,\chi}&=M_{\phi} \left[ \gamma_{\text{surf}} \frac{3\sqrt{2}\varepsilon}{4} + \theta \right], \\
C_{3,\chi}&=M_{\phi} \gamma_{\text{bend}} \frac{3\sqrt{2} \varepsilon}{8}.
\end{align}

The operator for the Cahn--Hilliard equation, $\zeta$, is defined as
\begin{equation}
    \zeta(\psi) := \left( \frac{3}{2\Delta t} - \lambda \Delta \right) \psi.
\end{equation}

\begin{remark}[Well-posedness of Linear Operators]
The discrete operators $\chi$ and $\zeta$ are well-posed under the homogeneous Neumann boundary conditions implemented through ghost cells in the cell-centered finite difference framework.

Given the expression $\chi(\phi) = C_{1,\chi} \phi - C_{2,\chi} \Delta_h \phi + C_{3,\chi}\Delta_h^2 \phi$, we analyze the properties of the discrete Laplacian operator, $\Delta_h$. For a system with Neumann boundary conditions, the eigenvalues $\lambda_{k,\ell}$ of $\Delta_h$ are given by:
$$\lambda_{k,\ell} = -\frac{4}{h^2}\left[\sin^2\left(\frac{k\pi h}{2L_x}\right) + \sin^2\left(\frac{\ell\pi h}{2L_y}\right)\right]$$
where the indices run from $k=0, \dots, m-1$ and $\ell=0, \dots, n-1$.

Note that these eigenvalues are non-positive (i.e., $-\lambda_{k,\ell} \ge 0$). The specific case $\lambda_{0,0} = 0$ corresponds to the constant mode. The spectrum of the discrete operator $\chi$ consists of eigenvalues:
$$\sigma_{k,\ell} = C_{1,\chi} - C_{2,\chi}\lambda_{k,\ell} + C_{3,\chi}\lambda_{k,\ell}^2.$$
For the constant mode ($k=\ell=0$), we have $\sigma_{0,0} = C_{1,\chi} > 0$. For all other modes, since $-\lambda_{k,\ell} > 0$ and all coefficients are positive, we obtain
$$\sigma_{k,\ell} = C_{1,\chi} + C_{2,\chi}|\lambda_{k,\ell}| + C_{3,\chi}\lambda_{k,\ell}^2 > C_{1,\chi} > 0,$$
ensuring invertibility of the discrete system.

For $\zeta(\psi) = \frac{3}{2\Delta t} \psi - \lambda \Delta_h \psi$:
This is a discrete Poisson-type operator. Using the same eigenvalues of $\Delta_h$, the spectrum of $\zeta$ is:
$$\tau_{k,\ell} = \frac{3}{2\Delta t} - \lambda\lambda_{k,\ell} = \frac{3}{2\Delta t} + \lambda|\lambda_{k,\ell}| \geq \frac{3}{2\Delta t} > 0,$$
guaranteeing unique solvability. The strictly positive spectrum ensures that the discrete linear systems can be efficiently solved using fast direct methods such as the discrete cosine transform (DCT) for problems with Neumann boundary conditions.
\end{remark}

Using these definitions, the fully discretized system can be formally rewritten. For brevity, we introduce the shorthand notation $S^* = S(\phi^{*,n+1})$, $H^* = H(\phi^{*,n+1})$, $Q^* = Q(\phi^{*,n+1})$, $K^* = K(\phi^{*,n+1},\psi^{*,n+1})$, $P^* = P(\phi^{*,n+1},\psi^{*,n+1})$, and the following scalar coefficients:
\begin{align}
    c_{gs} &= M_{\phi}\gamma_{\text{surf}}\frac{3\sqrt{2}}{2}, & c_{ga} &= M_{\phi}\gamma_{\text{area}}, \\
    c_{gb} &= M_{\phi}\gamma_{\text{bend}}\frac{3\sqrt{2}}{8\varepsilon}, & c_{go} &= M_{\phi}.
\end{align}
The coupled Allen--Cahn--Cahn--Hilliard system can then be written as
\begin{align}
    \phi^{n+1} & + c_{gs} \left[ \int_{\Omega} S^* \phi^{n+1} \,d\mathbf{x} \right] \chi^{-1}(S^*) \notag \\
    &  + c_{ga} \left[ \int_{\Omega} H^* \phi^{n+1} \,d\mathbf{x} \right] \chi^{-1}(H^*) \notag \\
    &  + c_{gb} \left[ \int_{\Omega} Q^* \phi^{n+1} \,d\mathbf{x} \right] \chi^{-1}(Q^*) \notag \\
    &  + c_{go} \left[ \int_{\Omega} K^* \phi^{n+1} \,d\mathbf{x} \right] \chi^{-1}(K^*)\notag\\
    & + c_{go} \left[ \int_{\Omega} P^* \psi^{n+1} \,d\mathbf{x} \right] \chi^{-1}(K^*)\notag\\
    &= \chi^{-1}(f^n),\label{eq:decoupled_AC}\\
    \psi^{n+1}
    &- \left[ \int_{\Omega} K^* \phi^{n+1} \,d\mathbf{x} \right] \zeta^{-1} \left( \nabla \cdot (M_{\psi}(\phi^{*,n+1}) \nabla P^*) \right)\notag\\    
    &-  \left[ \int_{\Omega} P^* \psi^{n+1} \,d\mathbf{x} \right] \zeta^{-1} \left( \nabla \cdot (M_{\psi}(\phi^{*,n+1}) \nabla P^*) \right)\notag\\
    &= \zeta^{-1}(g^n).\label{eq:decoupled_CH}
\end{align}

\subsection*{Step 4: Derivation of the Small Linear System}
The final step in the solution procedure is to solve for the unknown scalar integrals. This is achieved by reducing the large, coupled PDE system to a small, dense algebraic system.

First, we define the unknown integrals that appear in the expressions for $\phi^{n+1}$ and $\psi^{n+1}$:
\begin{align}
    \tilde{B} &= \int_{\Omega} S(\phi^{*,n+1}) \phi^{n+1} \, d\mathbf{x}, \quad
    \tilde{C} = \int_{\Omega} H(\phi^{*,n+1}) \phi^{n+1} \, d\mathbf{x}, \\
    \tilde{D} &= \int_{\Omega} Q(\phi^{*,n+1}) \phi^{n+1} \, d\mathbf{x}, \quad
    \tilde{E} = \int_{\Omega} K(\phi^{*,n+1}, \psi^{*,n+1}) \phi^{n+1} \, d\mathbf{x}, \\
    \tilde{G} &= \int_{\Omega} P(\phi^{*,n+1}, \psi^{*,n+1}) \psi^{n+1} \, d\mathbf{x}.
\end{align}

By taking the inner product of the formally solved Allen--Cahn and Cahn--Hilliard equations (Equations~\eqref{eq:decoupled_AC}--\eqref{eq:decoupled_CH}) with each of the relevant basis functions, we obtain a coupled $5 \times 5$ linear system for the unknowns $\tilde{B}, \tilde{C}, \tilde{D}, \tilde{E}, \tilde{G}$. This system is written in compact matrix form:
\begin{equation}
    \mathbf{M}_{\phi,\psi} \begin{pmatrix} \tilde{B} \\ \tilde{C} \\ \tilde{D} \\ \tilde{E} \\ \tilde{G}\end{pmatrix} = \mathbf{b}_{\phi,\psi}.
\end{equation}

To express the matrix and vector compactly, we use the inner product notation $(f,g) = \int_\Omega fg \,d\mathbf{x}$ and define $\hat{S}^* = \chi^{-1}(S^*)$, $\hat{H}^* = \chi^{-1}(H^*)$, $\hat{Q}^* = \chi^{-1}(Q^*)$, and $\hat{K}^* = \chi^{-1}(K^*)$. Additionally, we introduce $\Lambda_{P^*}=\zeta^{-1} \left( \nabla \cdot (M_{\psi}(\phi^{*,n+1}) \nabla P^*) \right)$.

The right-hand side vector $\mathbf{b}_{\phi,\psi}$ is given by
\begin{equation}
    \mathbf{b}_{\phi,\psi} = \begin{pmatrix}
        (S^*, \chi^{-1}(f^n)) \\ (H^*, \chi^{-1}(f^n)) \\ (Q^*, \chi^{-1}(f^n)) \\ (K^*, \chi^{-1}(f^n)) \\ (P^*, \zeta^{-1}(g^n))
    \end{pmatrix}.
\end{equation}

The matrix $\mathbf{M}_{\phi,\psi}$ has the following structure:
\begin{equation}
    \mathbf{M}_{\phi,\psi} = 
    \begin{pmatrix}
        m_{11} & m_{12} & m_{13} & m_{14} & m_{15} \\
        m_{21} & m_{22} & m_{23} & m_{24} & m_{25} \\
        m_{31} & m_{32} & m_{33} & m_{34} & m_{35} \\
        m_{41} & m_{42} & m_{43} & m_{44} & m_{45} \\
        m_{51} & m_{52} & m_{53} & m_{54} & m_{55}
    \end{pmatrix},
\end{equation}
where the entries are defined as follows:
\begin{align}
    &\text{Row 1--4, Columns 1--4:}\notag\\
    &m_{i1} = c_{gs}(F_i^*, \hat{S}^*), \quad m_{i2} = c_{ga}(F_i^*, \hat{H}^*),\notag\\
    &m_{i3} = c_{gb}(F_i^*, \hat{Q}^*), \quad m_{i4} = c_{go}(F_i^*, \hat{K}^*),\notag\\
    &\text{with } F_1^* = S^*, F_2^* = H^*, F_3^* = Q^*, F_4^* = K^*,\notag\\
    &\text{and diagonal corrections: } m_{ii} \rightarrow 1 + m_{ii} \text{ for } i = 1,2,3,4.\notag\\
   &\text{Column 5:}\notag\\
    &m_{15} = c_{go}(S^*, \hat{K}^*),\notag\\
    &m_{25} = c_{go}(H^*, \hat{K}^*),\notag\\
    &m_{35} = c_{go}(Q^*, \hat{K}^*),\notag\\
    &m_{45} = c_{go}(K^*, \hat{K}^*),\notag\\
    &m_{55} = 1-(P^*, \Lambda_{P^*}).\notag\\
    &\text{Row 5, Columns 1--4:}\notag\\
    &m_{51} = m_{52} = m_{53} = 0, \quad m_{54} = -(P^*, \Lambda_{P^*}).
\end{align}

Once these scalar values are determined, the full field variables $\phi^{n+1}$ and $\psi^{n+1}$ are recovered through substitution back into Equations~\eqref{eq:decoupled_AC}--\eqref{eq:decoupled_CH}.

\begin{remark}[CC-MSAV-BDF1 scheme differences]
We enumerate the components that differ for the CC-MSAV-BDF1 scheme. For the remaining components, the derivation follows the same format, with extrapolation terms replaced by the corresponding variables from the previous time step. 

The explicit part of the Allen--Cahn equation on the right-hand side, denoted by $f^n$, becomes:
\begin{align}
    f^n &= \frac{\phi^n}{\Delta t} - M_{\phi}\Biggl\{\left(\theta + \gamma_{\text{bend}}\frac{3\sqrt{2}}{4\varepsilon}\right)\Delta\phi^{n} \notag\\
    &\quad + \gamma_{\text{surf}} \frac{3 \sqrt{2}}{2}\left[V^n + \int_{\Omega} S\left(\phi^{n}\right)\left(-\phi^n\right) \,d\mathbf{x}\right] S(\phi^{n}) \notag\\
    &\quad + \gamma_{\text{area}} \left[U^n + \int_{\Omega} H\left(\phi^{n}\right)\left(-\phi^n\right) \,d\mathbf{x}\right] H(\phi^{n}) \notag\\
    &\quad + \gamma_{\text{bend}} \frac{3 \sqrt{2}}{8 \varepsilon} \left[W^n + \int_{\Omega} Q\left(\phi^{n}\right)\left(-\phi^n\right) \,d\mathbf{x}\right] Q(\phi^{n}) \notag\\
    &\quad + \Biggl[Z^n + \int_{\Omega} K\left(\phi^{n},\psi^{n}\right)\left(-\phi^n\right) \,d\mathbf{x} \notag\\
    &\quad\quad + \int_{\Omega} P\left(\phi^{n},\psi^{n}\right)\left(-\psi^n\right) \,d\mathbf{x}\Biggr]K(\phi^{n},\psi^{n})\Biggr\}.
\end{align}

The explicit part of the Cahn--Hilliard equation on the right-hand side, denoted by $g^n$, becomes:
\begin{align}
    g^n &= \frac{\psi^n}{\Delta t} - \lambda\Delta\psi^{n} + \nabla \cdot \left( M_{\psi}(\phi^{n}) \nabla \tilde{\nu}^{n+1} \right),
\end{align}
where $\tilde{\nu}^{n+1}$, the explicit portion of the chemical potential, is defined as
\begin{align}
    \tilde{\nu}^{n+1} &= \Biggl[Z^n + \int_{\Omega} K\left(\phi^n,\psi^n\right)\left(-\phi^n\right) \,d\mathbf{x} \notag\\
    &\quad + \int_{\Omega} P\left(\phi^{n},\psi^{n}\right)\left(-\psi^n\right) \,d\mathbf{x}\Biggr]P\left(\phi^{n},\psi^{n}\right).       
\end{align}

One of the coefficients for the operator $\chi$ is modified to:
\begin{equation}
    C_{1,\chi} = \frac{1}{\Delta t} + M_{\phi} \gamma_{\text{surf}} \frac{3\sqrt{2}\beta}{4\varepsilon}.
\end{equation}

The operator for the Cahn--Hilliard equation, $\zeta$, is modified to:
\begin{equation}
    \zeta(\psi) := \left( \frac{1}{\Delta t} - \lambda \Delta \right) \psi.
\end{equation}
\end{remark}

\begin{remark}[Differences in the Classical--MSAV--BDF2 Scheme]

We summarize the components that differ in the Classical MSAV BDF2 formulation. 
The only modifications occur in the Cahn--Hilliard part of the scheme.

First, the right-hand side $g^n$, which collects all explicit contributions, is replaced by
\begin{align}
    g^n 
    &= \frac{4\psi^{n} - \psi^{n-1}}{2\Delta t}
    + \nabla \cdot \left( M_{\psi}(\phi^{*,n+1}) \nabla \tilde{\nu}^{\,n+1} \right),
\end{align}
where $\tilde{\nu}^{\,n+1}$, the explicit portion of the chemical potential, is defined as
\begin{align}
    \tilde{\nu}^{\,n+1}
    &= -\lambda \psi^{*,n+1} \notag\\
    &\quad + \frac{1}{3} \Biggl[
        4 Z^{n} - Z^{n-1}
        + \int_{\Omega}
            K(\phi^{*,n+1},\psi^{*,n+1})
            \, (-4\phi^{n} + \phi^{n-1}) \, d\mathbf{x} \notag\\
    &\qquad\qquad
        + \int_{\Omega}
            P(\phi^{*,n+1},\psi^{*,n+1})
            \, (-4\psi^{n} + \psi^{n-1}) \, d\mathbf{x}
        \Biggr]
        P(\phi^{*,n+1},\psi^{*,n+1}).
\end{align}

Second, the linear operator associated with the Cahn--Hilliard equation is modified to
\begin{equation}
    \zeta(\psi)
    := \left(
        \frac{3}{2\Delta t}
        - \lambda \nabla \cdot \left( M_{\psi}(\phi^{*,n+1}) \nabla \right)
    \right) \psi.
\end{equation}

\end{remark}

\begin{remark}[Differences in the Classical--MSAV--BDF1 Scheme]

Similarly, we summarize the differences in the Classical MSAV BDF1 formulation relative to the CC--MSAV BDF1 scheme.

First, the right-hand side $g^n$, which collects all explicit contributions, is replaced by
\begin{align}
    g^n &= \frac{\psi^n}{\Delta t} 
    + \nabla \cdot \left( M_{\psi}(\phi^{n}) \nabla \tilde{\nu}^{\,n+1} \right),
\end{align}
where $\tilde{\nu}^{\,n+1}$, the explicit portion of the chemical potential, is defined as
\begin{align}
    \tilde{\nu}^{\,n+1}
    &= -\lambda \psi^{n} \notag\\
    &\quad + \Biggl[
        Z^n 
        + \int_{\Omega} K(\phi^{n},\psi^{n})\, (-\phi^{n}) \, d\mathbf{x} \notag\\
    &\qquad\qquad
        + \int_{\Omega} P(\phi^{n},\psi^{n})\, (-\psi^{n}) \, d\mathbf{x}
        \Biggr]
        P(\phi^{n},\psi^{n}).
\end{align}

Second, the linear operator is modified to
\begin{equation}
    \zeta(\psi)
    := \left(
        \frac{1}{\Delta t}
        - \lambda \nabla \cdot \left( M_{\psi}(\phi^{n}) \nabla \right)
    \right) \psi.
\end{equation}

\end{remark}

\begin{algorithm}[H]
\caption{BDF2 MSAV Solution Procedure for Coupled Allen--Cahn--Cahn--Hilliard System}
\begin{algorithmic}[1]
\State \textbf{Input:} Previous solutions $\phi^n, \phi^{n-1}, \psi^n, \psi^{n-1}$; SAVs $V^n, U^n, W^n, Z^n$ and their $(n-1)$ values
\State \textbf{Output:} Updated fields $\phi^{n+1}, \psi^{n+1}$ and SAVs at time $n+1$
\State
\State \textbf{Preprocessing:}
\State \quad Compute extrapolations $\phi^{*,n+1} = 2\phi^n - \phi^{n-1}$, $\psi^{*,n+1} = 2\psi^n - \psi^{n-1}$
\State \quad Evaluate basis functions $S^*, H^*, Q^*, K^*, P^*$ at extrapolated values
\State
\State \textbf{Linear System Setup:}
\State \quad Compute RHS vectors $f^n$ and $g^n$ using explicit terms
\State \quad Solve elliptic problems: $\hat{f}^n = \chi^{-1}(f^n)$ and $\hat{g}^n = \zeta^{-1}(g^n)$
\State \quad For each basis function $F \in \{S^*, H^*, Q^*, K^*, P^*\}$:
\State \quad\quad Solve $\hat{F} = \chi^{-1}(F)$ or $\Lambda_{P^*} = \zeta^{-1}(\nabla \cdot (M_{\psi}(\phi^{*,n+1}) \nabla P^*))$
\State
\State \textbf{Reduced System Solution:}
\State \quad Assemble $5 \times 5$ matrix $\mathbf{M}_{\phi,\psi}$ and vector $\mathbf{b}_{\phi,\psi}$
\State \quad Solve dense system: $\mathbf{M}_{\phi,\psi} \cdot [\tilde{B}, \tilde{C}, \tilde{D}, \tilde{E}, \tilde{G}]^T = \mathbf{b}_{\phi,\psi}$
\State
\State \textbf{Field Recovery:}
\State \quad Reconstruct $\phi^{n+1}$ using Equation~\eqref{eq:decoupled_AC} with computed integrals
\State \quad Reconstruct $\psi^{n+1}$ using Equation~\eqref{eq:decoupled_CH} with computed integrals
\State
\State \textbf{SAV Update:}
\State \quad Update SAVs using
\begin{equation}
    V^{n+1} = \tilde{B}+\frac{1}{3}\left(4V^n - V^{n-1} +  \int_{\Omega} S(\phi^{*,n+1})(- 4\phi^n + \phi^{n-1})\,d\mathbf{x}\right)
\end{equation}
 \quad and similarly for others
\State
\State \textbf{Return} $\phi^{n+1}, \psi^{n+1}, V^{n+1}, U^{n+1}, W^{n+1}, Z^{n+1}$
\end{algorithmic}
\end{algorithm}

\newpage

\section{Fast Direct Solver for the Discretized Equations}
\label{sec:appendix_fast_direct_solver}

To solve the linear system $\chi \boldsymbol{\phi} = \mathbf{f}$, we employ a fast direct solver based on spectral diagonalization.

Constant-coefficient elliptic operators on rectangular domains can be diagonalized using fast orthogonal transforms depending on the boundary conditions: FFT for periodic, DST for homogeneous Dirichlet, and DCT for homogeneous Neumann boundaries. 
These transform-based Poisson solvers are classical \cite{strang1999discrete,trefethen2000spectral,leveque2007finite,schumann1988fast}. Efficient direct solvers for separable elliptic problems have also been developed in the spectral and spectral-element settings \cite{shen1994efficient,kwan2007efficient}. 
Here, we apply the same idea to \emph{fourth-order} constant-coefficient elliptic operators discretized with cell-centered finite differences and homogeneous Neumann boundary conditions.

Since $\chi$ is a polynomial in the discrete Laplacian $\Delta_h$, it shares the same eigenvectors and is therefore diagonalizable by the DCT. 
Let $h$ denote the grid spacing and $(m,n)$ the grid size. 
The eigenvalues of $\Delta_h$ for cell-centered grids with Neumann BCs are
\begin{equation}
    \lambda_{ij} = -\frac{4}{h^2}\!\left[\sin^2\!\left(\frac{\pi i}{2m}\right) + \sin^2\!\left(\frac{\pi j}{2n}\right)\right],
    \quad i=0,\ldots,m-1,\ j=0,\ldots,n-1,
\end{equation}
with $\lambda_{00}=0$ corresponding to the constant mode.
The operator $\chi$ then has eigenvalues
\begin{equation}
    \lambda_{\chi,ij} = C_{1,\chi} - C_{2,\chi}\lambda_{ij} + C_{3,\chi}\lambda_{ij}^2,
\end{equation}
where coefficients $C_{k,\chi}$ depend on the form of $\chi$ in Eq.~\eqref{eq:chi_operator}.

The solver proceeds as:
\begin{enumerate}
    \item Apply a forward 2D DCT: $\hat{\mathbf{f}} = \text{DCT}(\mathbf{f})$.
    \item Compute $\hat{\boldsymbol{\phi}} = \hat{\mathbf{f}} / \lambda_{\chi}$, handling the zero mode by enforcing $\sum \mathbf{f} = 0$.
    \item Apply an inverse DCT: $\boldsymbol{\phi} = \text{IDCT}(\hat{\boldsymbol{\phi}})$.
\end{enumerate}

This direct $O(N^2 \log N)$ solver achieves machine precision and eliminates iterative convergence issues. 
While similar procedures apply for periodic or Dirichlet boundaries (using FFT or DST), this DCT-based formulation provides an efficient and straightforward approach to fourth-order elliptic equations with homogeneous Neumann conditions.

\section{Spatial Discretization: Complete Technical Framework}
\label{sec:appendix_spatial_operators}

In this appendix, we collect the technical details of the staggered-grid,
cell-centered finite difference discretization used in
Section~\ref{sec:spatial}.  We
record here the grid layout, discrete operators, summation-by-parts identities,
and the verification of structure-preserving properties used throughout the
paper.

\subsection{Grid Structure and Discrete Function Spaces}

We consider the rectangular domain $\Omega=(0,L_x)\times(0,L_y)$, where
$L_x=m h$ and $L_y=n h$, with uniform grid spacing $h>0$. Following standard
staggered-grid constructions, we define the one-dimensional grids
\begin{align}
E_m &= \{ i h \mid i=0,\dots,m \},\\
C_m &= \{ (i-\tfrac12)h \mid i=1,\dots,m \},\\
\bar C_m &= \{ (i-\tfrac12)h \mid i=0,\dots,m+1 \},
\end{align}
representing edge-centered points, cell centers, and cell centers including
ghost points, respectively. Analogous definitions apply in the $y$-direction.

East--west faces are indexed by $i+\tfrac12$, $i=0,\dots,m$, and north--south
faces by $j+\tfrac12$, $j=0,\dots,n$.

Based on these grids, we define the discrete function spaces
\begin{align}
\mathcal{C}_{m\times n}
&= \{ \phi : C_m\times C_n \to \mathbb{R} \},\\
\bar{\mathcal{C}}_{m\times n}
&= \{ \phi : \bar C_m\times \bar C_n \to \mathbb{R} \},\\
\mathcal{E}^{\mathrm{ew}}_{m\times n}
&= \{ f : E_m\times C_n \to \mathbb{R} \},\\
\mathcal{E}^{\mathrm{ns}}_{m\times n}
&= \{ g : C_m\times E_n \to \mathbb{R} \}.
\end{align}

In component form, we write
\[
\phi_{i,j}=\phi(x_i,y_j), \qquad
f_{i+\frac12,j}=f(x_{i+\frac12},y_j), \qquad
g_{i,j+\frac12}=g(x_i,y_{j+\frac12}),
\]
where $x_i=(i-\tfrac12)h$ and $y_j=(j-\tfrac12)h$.

\subsection{Discrete Difference and Averaging Operators}

We now introduce the discrete operators used throughout the paper.

\subsubsection{Edge-to-Center Differences}

For face-centered quantities, we define
\begin{align}
(d_x f)_{i,j}
&= \frac{f_{i+\frac12,j}-f_{i-\frac12,j}}{h},
&& f\in\mathcal{E}^{\mathrm{ew}}_{m\times n},\\
(d_y g)_{i,j}
&= \frac{g_{i,j+\frac12}-g_{i,j-\frac12}}{h},
&& g\in\mathcal{E}^{\mathrm{ns}}_{m\times n}.
\end{align}
These operators provide a conservative discretization of the divergence.

\subsubsection{Center-to-Edge Operators}

For $\phi\in\mathcal{C}_{m\times n}$, we define the averaging operators
\begin{align}
(A_x\phi)_{i+\frac12,j} &= \tfrac12(\phi_{i,j}+\phi_{i+1,j}),\\
(A_y\phi)_{i,j+\frac12} &= \tfrac12(\phi_{i,j}+\phi_{i,j+1}),
\end{align}
and the discrete gradients
\begin{align}
(D_x\phi)_{i+\frac12,j} &= \frac{\phi_{i+1,j}-\phi_{i,j}}{h},\\
(D_y\phi)_{i,j+\frac12} &= \frac{\phi_{i,j+1}-\phi_{i,j}}{h}.
\end{align}

\subsubsection{Discrete Laplacian}

We define the discrete Laplacian by composing divergence and gradient operators:
\begin{align}
(\Delta_h\phi)_{i,j}
&= (d_xD_x\phi)_{i,j}+(d_yD_y\phi)_{i,j}\\
&= \frac{1}{h^2}
\left(\phi_{i+1,j}+\phi_{i-1,j}+\phi_{i,j+1}+\phi_{i,j-1}-4\phi_{i,j}\right).
\end{align}

\subsection{Discrete Inner Products and Norms}

For cell-centered functions $\phi,\psi\in\mathcal{C}_{m\times n}$, we use the
discrete $L^2$ inner product
\[
(\phi,\psi)_h
= h^2\sum_{i=1}^m\sum_{j=1}^n \phi_{i,j}\psi_{i,j}.
\]

For face-centered quantities, we define
\begin{align}
[f,g]_{\mathrm{ew},h}
&= \tfrac{h^2}{2}\sum_{i,j}
\big(f_{i+\frac12,j}g_{i+\frac12,j}
+f_{i-\frac12,j}g_{i-\frac12,j}\big),\\
[f,g]_{\mathrm{ns},h}
&= \tfrac{h^2}{2}\sum_{i,j}
\big(f_{i,j+\frac12}g_{i,j+\frac12}
+f_{i,j-\frac12}g_{i,j-\frac12}\big).
\end{align}

We then define the discrete $H^1$ norm
\[
\|\nabla_h\phi\|_2^2
=[D_x\phi,D_x\phi]_{\mathrm{ew},h}
+[D_y\phi,D_y\phi]_{\mathrm{ns},h}.
\]

\subsection{Boundary Conditions}

Throughout the paper, we impose homogeneous Neumann boundary conditions.
We enforce these using ghost cells by setting
\[
\phi_{0,j}=\phi_{1,j}, \quad \phi_{m+1,j}=\phi_{m,j}, \quad
\phi_{i,0}=\phi_{i,1}, \quad \phi_{i,n+1}=\phi_{i,n}.
\]
With this choice, the face-centered normal derivatives vanish at the boundary:
\[
(D_x\phi)_{\frac12,j}=(D_x\phi)_{m+\frac12,j}=0, \qquad
(D_y\phi)_{i,\frac12}=(D_y\phi)_{i,n+\frac12}=0.
\]

\subsection{Summation-by-Parts Identities}

The staggered-grid operators satisfy discrete summation-by-parts identities.
For $\phi\in\bar{\mathcal{C}}_{m\times n}$,
$f\in\mathcal{E}^{\mathrm{ew}}_{m\times n}$, and
$g\in\mathcal{E}^{\mathrm{ns}}_{m\times n}$, we have
\begin{align}
[D_x\phi,f]_{\mathrm{ew},h}
&= -(\phi,d_x f)_h + B_x(\phi,f),\\
[D_y\phi,g]_{\mathrm{ns},h}
&= -(\phi,d_y g)_h + B_y(\phi,g),
\end{align}
where $B_x$ and $B_y$ denote boundary contributions.

Under the homogeneous Neumann conditions imposed above and vanishing boundary
fluxes, these boundary terms are zero. We therefore obtain the discrete Green's
identity
\[
[D_x\phi,D_x\psi]_{\mathrm{ew},h}
+[D_y\phi,D_y\psi]_{\mathrm{ns},h}
= -(\phi,\Delta_h\psi)_h
= -(\Delta_h\phi,\psi)_h.
\]
In particular,
\[
(\phi,\Delta_h\phi)_h = -\|\nabla_h\phi\|_2^2 \le 0.
\]

\subsection{Exact Mass Conservation}

Because all fluxes are discretized in conservative form, mass conservation
follows immediately.

\begin{theorem}
For any fluxes $f\in\mathcal{E}^{\mathrm{ew}}_{m\times n}$ and
$g\in\mathcal{E}^{\mathrm{ns}}_{m\times n}$ satisfying the boundary conditions,
\[
h^2\sum_{i,j}(d_x f+d_y g)_{i,j}=0.
\]
\end{theorem}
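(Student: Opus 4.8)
The plan is to prove this discrete divergence theorem by exploiting the telescoping structure built into the conservative difference operators $d_x$ and $d_y$, and then invoking the vanishing-flux boundary conditions. The argument is elementary once the operators are written out, so the work is bookkeeping rather than estimation. First I would split the sum into its $x$- and $y$-contributions, treating each independently since they decouple:
\[
h^2\sum_{i,j}(d_xf+d_yg)_{i,j}
= h^2\sum_{i,j}(d_xf)_{i,j}
+ h^2\sum_{i,j}(d_yg)_{i,j}.
\]

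For the $x$-contribution I would substitute the definition $(d_xf)_{i,j}=(f_{i+\frac12,j}-f_{i-\frac12,j})/h$, so that the prefactor $h^2$ combines with the $1/h$ to leave a single power of $h$. The inner sum over $i$ from $1$ to $m$ then telescopes, collapsing to the difference of the two outermost face values:
\[
h^2\sum_{i=1}^m\sum_{j=1}^n(d_xf)_{i,j}
= h\sum_{j=1}^n\bigl(f_{m+\frac12,j}-f_{\frac12,j}\bigr).
\]
An identical telescoping in the $y$-direction reduces the $g$-contribution to a sum of north and south boundary fluxes, $h\sum_{i=1}^m\bigl(g_{i,n+\frac12}-g_{i,\frac12}\bigr)$. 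Thus the entire interior divergence reduces to fluxes across the four domain boundaries.

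To finish, I would invoke the boundary conditions satisfied by the flux fields. The homogeneous Neumann data on the potentials transfers to vanishing normal fluxes at the domain edges, so that $f_{\frac12,j}=f_{m+\frac12,j}=0$ and $g_{i,\frac12}=g_{i,n+\frac12}=0$ for all $i,j$; this is the discrete analog of the conditions $(D_x\phi)_{\frac12,j}=(D_x\phi)_{m+\frac12,j}=0$ already recorded for the fields. Each remaining boundary sum therefore vanishes term by term, giving the claimed identity. The only point requiring care is making explicit what ``satisfying the boundary conditions'' means for face-centered flux variables rather than for cell-centered scalars; this is not a genuine obstacle but simply the observation that the no-flux condition is imposed on the outermost faces. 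Consequently, summation-by-parts produces no boundary contribution and the total discrete divergence is exactly zero, which immediately yields $\sum_{i,j}\psi^{n+1}_{i,j}=\sum_{i,j}\psi^{n}_{i,j}$ in the mass-conservation application.
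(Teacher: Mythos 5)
Your proof is correct and follows exactly the route the paper takes: the paper's own proof states only that ``the result follows by telescoping sums in each coordinate direction,'' with details omitted, and your write-up supplies precisely those details—telescoping each directional sum down to the outermost face values and then killing them with the no-flux boundary conditions. Your closing remark about what ``satisfying the boundary conditions'' means for face-centered fluxes is a worthwhile clarification, but it is the same argument, just made explicit.
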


\begin{proof}
The result follows by telescoping sums in each coordinate direction. The remaining
details are straightforward and omitted.
\end{proof}

\subsection{Structure-Preserving Discretization of Nonlinear Terms}

We now verify the key structure-preserving properties used in the energy analysis.

\subsubsection{Quadratic Face Averaging}

We define the quadratic face-averaging operators
\begin{align}
A_x^{(q)}(\phi^2)_{i+\frac12,j}
&= \tfrac13(\phi_{i+1,j}^2+\phi_{i+1,j}\phi_{i,j}+\phi_{i,j}^2),\\
A_y^{(q)}(\phi^2)_{i,j+\frac12}
&= \tfrac13(\phi_{i,j+1}^2+\phi_{i,j+1}\phi_{i,j}+\phi_{i,j}^2).
\end{align}

A direct computation shows that the discrete product rule
\[
D_x(\phi^3)=3A_x^{(q)}(\phi^2)D_x\phi,
\qquad
D_y(\phi^3)=3A_y^{(q)}(\phi^2)D_y\phi
\]
holds exactly. Combining this identity with the SBP property yields
\[
(\phi^3,\Delta_h\phi)_h
= -3(\phi^2,|\nabla_h\phi|^2)_h.
\]

\subsubsection{Variable Mobility Flux}

We discretize the variable-mobility Cahn--Hilliard flux as
\[
\nabla\cdot(M_\psi\nabla\nu)
\;=\;
d_x(M_\psi(A_x\phi)D_x\nu)
+d_y(M_\psi(A_y\phi)D_y\nu).
\]
Using SBP, we obtain
\[
(\nu,\nabla\cdot(M_\psi\nabla\nu))_h
= -\|\sqrt{M_\psi}\nabla_h\nu\|_2^2 \le 0.
\]

\subsubsection{Nonlocal Bending Term}

Finally, we discretize the nonlocal bending contribution as
\begin{align}
[\phi|\nabla\phi|^2]_{i,j}
&= \phi_{i,j}\left[
\frac{|D_x\phi|^2_{i+\frac{1}{2},j} + |D_x\phi|^2_{i-\frac{1}{2},j}}{2}
+ \frac{|D_y\phi|^2_{i,j+\frac{1}{2}} + |D_y\phi|^2_{i,j-\frac{1}{2}}}{2}
\right],\\
[\nabla\cdot(\phi^2\nabla\phi)]_{i,j}
&= d_x((A_x\phi^2)D_x\phi)_{i,j}
+ d_y((A_y\phi^2)D_y\phi)_{i,j}.
\end{align}
A careful but routine application of SBP then yields the discrete bending-energy
identity stated in the main text.

\section{Selection of Stabilization Parameters}
\label{app:stabilization_parameters}

The stabilization parameters $\theta$ and $\lambda$ require careful selection to ensure energy stability, monotonic energy dissipation, and mass conservation. We conducted systematic parameter testing to determine the minimum viable values that satisfy these criteria for interface width $\varepsilon = 0.03125$ and spatial resolution $N = 256$.

\paragraph{Allen-Cahn Stabilization Parameter $\theta$}
The Allen-Cahn stabilization parameter $\theta$ provides artificial dissipation in the chemical potential to control interface dynamics. Initial validation tests over 100 time steps identified $\theta = 1.5$ as the minimum value ensuring energy monotonicity (after initial transients) and mass conservation to high precision. This value is used for all standard simulations. For robustness tests involving aggressive morphological changes (Section~\ref{sec:complex_dynamics}), we increase $\theta$ to 30 to maintain stability under more challenging interfacial dynamics.

\paragraph{Cahn-Hilliard Stabilization Parameter $\lambda$}
The Cahn-Hilliard stabilization parameter $\lambda$ appears in the IMEX splitting term $\lambda(\Delta\psi^{n+1} - \Delta\psi^{*,n+1})$, where $\psi^{*,n+1}$ is a second-order extrapolation. Initial testing identified $\lambda = 4.5 \times 10^4$ as minimally sufficient for short simulations, but extended tests with various initial geometries revealed that $\lambda$ must be larger to maintain stability across diverse scenarios. We therefore adopt $\lambda = 10^5$ for all simulations to robustly accommodate different test cases. 

\section{Generation of Smoothed Initial Conditions}
\label{sec:appendix_smoothing}

The numerical experiments in Section~\ref{sec:complex_dynamics} utilizing non-analytical initial geometries (triangle, star, incomplete hexagon, and crescent shape) require conversion from sharp interface profiles to smooth, diffuse interfaces suitable for phase field simulation. The sharp profiles, where the domain is partitioned into regions of $\phi = 1$ and $\phi = -1$, are relaxed into physically realistic and numerically stable starting conditions.

This smoothing is achieved by evolving the sharp initial condition, $\phi_{\text{sharp}}$, for a short duration using the classical Cahn-Hilliard equation with constant mobility:
\begin{equation}
    \frac{\partial \phi}{\partial t} = M \Delta \left( \frac{1}{\varepsilon}(\phi^3 - \phi) - \varepsilon \Delta \phi \right),
\end{equation}
subject to homogeneous Neumann boundary conditions. The mobility parameter $M = 0.01$ and time step $\Delta t = 1 \times 10^{-6}$ are selected to minimize interface migration while achieving numerical stability. These parameters limit the diffusive length scale to $O(\varepsilon)$ over the smoothing duration, preventing over-smoothing that would compromise the initial geometric features.

The smoothing simulation employs the same semi-implicit, first-order time-stepping scheme used in the main solver. The linear biharmonic term ($-M\varepsilon \Delta^2 \phi$) is treated implicitly, while the nonlinear term is explicit, resulting in linear, constant-coefficient biharmonic-type equations solved using the fast direct solver (~\ref{sec:appendix_fast_direct_solver}). 

The smoothing evolution is run for a short total time $T_{\text{smooth}} = 10^{-5}$. 
Over this interval, the free energy decreases rapidly at early times and then 
approaches a plateau, indicating that the diffuse interface has reached its local 
equilibrium profile while the global geometric features remain essentially 
unchanged. The resulting smoothed interfaces exhibit transition layers consistent 
with the expected diffuse-interface thickness of the Cahn--Hilliard model, without 
altering the intended geometry. As a final preprocessing step, we clamp values 
outside the physical range by applying
  $ \phi \leftarrow \max(-1, \min(1, \phi))$.

\section{Simulation Parameters}
\label{sec:appendix_simulation_parameters}
This section details the parameters used for the numerical experiments presented in Section~\ref{sec:numerical_results}. Parameters that differ between growth and shrinkage scenarios are distinguished using subscripts $g$ and $s$, respectively.

\begin{table}[H]
    \centering
    \caption{Parameter values used in benchmark simulations. Physical parameters are dimensionless.}
    \label{tab:benchmark_params}
    \begin{tabular}{|l|c|c|}
        \hline
        \textbf{Parameter} & \textbf{Symbol} & \textbf{Value} \\
        \hline \hline
        \multicolumn{3}{|c|}{\textbf{Physical Parameters}} \\
        \hline
        Surface Tension & $\gamma_{\text{surf}}$ & 1.0 \\
        Bending Rigidity (Growth) & $\gamma_{\text{bend,g}}$ & 0.05 \\
        Bending Rigidity (Shrinkage) & $\gamma_{\text{bend,s}}$ & 1 \\
        Arc Length Constraint Penalty & $\gamma_{\text{area}}$ & $5 \times 10^{4}$ \\
        Osmotic Penalty (in) & $\gamma_{\text{in}}$ & $1 \times 10^{5}$ \\
        Osmotic Penalty (out) & $\gamma_{\text{out}}$ & $1 \times 10^{5}$ \\
        Equilibrium Conc. (in, Growth) & $\psi_{\text{in},g}$ & 0.65 \\
        Equilibrium Conc. (in, Shrinkage) & $\psi_{\text{in},s}$ & 0.1 \\
        Equilibrium Conc. (out) & $\psi_{\text{out}}$ & 0.8 \\
        Osmotic Baseline Constant (in) & $\beta_{\text{in}}$ & 0 \\
        Osmotic Baseline Constant (out) & $\beta_{\text{out}}$ & 0 \\
        Interface Width & $\varepsilon$ & 0.03125 \\
        Allen-Cahn Mobility & $M_{\phi}$ & 1.0 \\
        Cahn-Hilliard Mobility Param. & $M_{0}$ & 0.5 \\
        \hline
          \multicolumn{3}{|c|}{\textbf{Common Numerical Parameters}} \\
        \hline
        Grid Resolution & $N$ & $256 \times 256$ \\
        Time Step & $\Delta t$ & $1 \times 10^{-6}$ \\
        Final Time & $T$ & $0.02$ \\       
        \hline
        \multicolumn{3}{|c|}{\textbf{Numerical Parameters (MSAV Method)}} \\
        \hline
        Allen-Cahn Stabilization  & $\theta$ & 1.5 \\
        Allen-Cahn Stabilization Parameter & $\beta$ & 0 \\
        Cahn-Hilliard Stabilization & $\lambda$ & $1 \times 10^{5}$ \\
        \hline
        \multicolumn{3}{|c|}{\textbf{Numerical Parameters (NLMG Method)}} \\
        \hline
        Convergence Tolerance & \texttt{tol} & $1 \times 10^{-8}$ \\
        Minimum Refinement Level & \texttt{minlevel} & -8 \\
        Pre-smoothing Steps & \texttt{presmooth} & 2 \\
        Post-smoothing Steps & \texttt{postsmooth} & 2 \\
        Maximum Iterations & \texttt{maxits} & 100 \\
        \hline
    \end{tabular}
\end{table}

\end{document}